\pdfoutput=1

\documentclass[11pt]{article}

\usepackage[utf8]{inputenc}
\usepackage{amssymb,amsthm}
\usepackage{mathtools}
\usepackage[usenames,dvipsnames]{xcolor}
\definecolor{parcolor}{rgb}{0.59, 0.29, 0}
\definecolor{seccolor}{rgb}{0.59, 0.29, 0}
\definecolor{distinguished}{rgb}{0, 0, 0}

\usepackage{setspace}
\setstretch{1.05}

\usepackage{hyperref}
\hypersetup{colorlinks=true,linkcolor={Brown},citecolor={Brown},urlcolor={Brown}}
\usepackage{sectsty}
\sectionfont{\color{Brown}}
\subsectionfont{\color{Brown}}
\paragraphfont{\color{Brown}}
\usepackage[compress]{cleveref}
\usepackage{thmtools, thm-restate}
\declaretheoremstyle[%
  spaceabove=.8\baselineskip,%
  spacebelow=.8\baselineskip,%
  headfont=\bfseries,%
  notefont=\normalfont,%
  bodyfont=\itshape,%
  postheadspace=.5em%
]{thms}

\declaretheoremstyle[%
  spaceabove=.8\baselineskip,%
  spacebelow=.8\baselineskip,%
  headfont=\bfseries,%
  notefont=\normalfont,%
  bodyfont=\normalfont,%
  postheadspace=.5em%
]{defn}

\numberwithin{equation}{section}
\theoremstyle{thms}
\newtheorem{thrm}[equation]{Theorem}
\newtheorem{cor}[equation]{Corollary}
\newtheorem{lem}[equation]{Lemma}
\newtheorem{prop}[equation]{Proposition}

\theoremstyle{defn}
\newtheorem{cns}[equation]{Construction}
\newtheorem{defn}[equation]{Definition}
\newtheorem*{defn*}{Definition}
\newtheorem{exa}[equation]{Example}

\newtheorem{notn}[equation]{Convention}
\newtheorem*{notn*}{Notation}
\newtheorem{rmk}[equation]{Remark}

  \crefname{prop}{Proposition}{Propositions}
  \crefname{cns}{Construction}{Constructions}
  \crefname{def}{Definition}{Definitions}
  \crefname{rmk}{Remark}{Remarks}
  \crefname{notn}{Convention}{Conventions}
  \crefname{exa}{Example}{Examples}
  \crefname{thrm}{Theorem}{Theorems}
  \crefname{thrmprime}{Theorem'}{Theorem'}
\usepackage[shortlabels]{enumitem}
  \newlist{axenum}{enumerate}{1} 
  \setlist[axenum]{label=(\Alph*)}
  \crefname{axenumi}{Axiom}{Axioms}
\setlist{itemsep=.1em}
\usepackage{amsmath}
\usepackage{bm}
\usepackage[full]{textcomp} 
\usepackage[p,osf]{fbb} 
\usepackage[scaled=.95,type1]{cabin} 
\usepackage[varqu,varl]{zi4}
\usepackage[T1]{fontenc} 
\usepackage[libertine]{newtxmath}
\usepackage[cal=boondoxo,bb=boondox,frak=boondox]{mathalfa}

\usepackage{tikz}
\usetikzlibrary{arrows,shapes,positioning,backgrounds,cd}
\tikzset{-,>=stealth',shorten >=2pt,shorten <=2pt,
  main node/.style={circle,fill=blue!20,inner sep=2pt,font=\sffamily\tiny\bfseries},
  desc/.style={font=\sffamily\footnotesize, align=center}
}
\usepackage{subcaption}
\usepackage{textcomp}
\usepackage{tikz-cd}
\RequirePackage[margin=3.3cm]{geometry}
\usetikzlibrary{arrows,arrows.meta}
\tikzcdset{arrow style=tikz}
\newlength{\perspective}

\newcommand\footnotewomarker[1]{%
  \begingroup
  \renewcommand\thefootnote{}\footnote{#1}%
  \addtocounter{footnote}{-1}%
  \endgroup
}
\usepackage{xspace}
\newcommand{\icat}{$\infty$\nobreakdash-category\xspace}
\newcommand{\icats}{$\infty$\nobreakdash-categories\xspace}
\newcommand{\icategorical}{$\infty$-categorical\xspace}
\newcommand{\ioperad}{$\infty$-operad\xspace}
\newcommand{\ioperads}{$\infty$-operads\xspace}
\newcommand{\subicat}{sub-$\infty$-category\xspace}
\newcommand{\subicats}{sub-$\infty$-categories\xspace}

\newcommand{\mathsc}[1]{{\normalfont\textsc{#1}}}

\newcommand{\iCat}{\textup{Cat}_{\infty}}
\newcommand{\iCatst}{\textup{Cat}_{\infty}^{\textup{st}}}
\newcommand{\iCAT}{\mathsc{Cat}_{\infty}}
\newcommand{\iCATst}{\mathsc{Cat}_{\infty}^{\textup{st}}}

\newcommand{\iCATco}{\mathsc{Cat}_{\infty}^{\textup{c}}}
\newcommand{\iCATcost}{\mathsc{Cat}_{\infty}^{\textup{c,st}}}

\newcommand{\ICAT}{\widehat{\mathsc{Cat}}_{\infty}}
\newcommand{\PrL}{\textup{Pr}^{\textup{L}}}
\newcommand{\PrR}{\textup{Pr}^{\textup{R}}}
\newcommand{\Prpt}{\textup{Pr}^{\textup{pt}}}

\newcommand{\Lpt}{L_{\textup{pt}}}
\newcommand{\Llcl}{L_{\lcl}}
\newcommand{\LT}{L_{\T}}

\newcommand{\loccit}{\textsl{loc.\,cit}.\@\xspace}

\DeclareMathOperator{\Dbc}{D^{\textup{b}}_{\textup{c}}}
\DeclareMathOperator{\Dbrh}{D^{\textup{b}}_{\textup{rh}}}
\DeclareMathOperator{\Hm}{H}
\DeclareMathOperator{\Hom}{Hom}
\DeclareMathOperator{\Spec}{Spec}
\newcommand{\one}{\mathbb{1}}
\newcommand{\QQ}{\mathbb{Q}}
\newcommand{\1}[1]{\mathbb{1}_{#1}}
\newcommand{\blank}{\mbox{-}}
\newcommand{\pblank}{(\blank)}
\newcommand{\id}{\on{id}}
\newcommand{\into}{\hookrightarrow}

\newcommand{\calg}[1]{%
  \operatorname{CAlg}(#1)}
\usepackage{etoolbox}
\newcommand{\fun}[3][]{%
  \ifblank{#1}{
    \operatorname{Fun}(#2,#3)}
  {
    \operatorname{Fun}^{#1}(#2,#3)}}
\newcommand{\Mod}[2][]{%
  \ifblank{#1}{
    \operatorname{Mod}(#2)}
  {
    \operatorname{Mod}_{#1}(#2)}}
\newcommand{\mon}[2][]{%
  \ifblank{#1}{
    \operatorname{Mon}(#2)}
  {
    \operatorname{Mon}_{#1}(#2)}}
\newcommand{\morgm}[2][]{%
  \ifblank{#1}{
    [#2]}
  {
    [#2]_{#1}}}
\newcommand{\alg}[2][]{%
  \ifblank{#1}{
    \operatorname{Alg}(#2)}
  {
    \operatorname{Alg}_{#1}(#2)}}
\newcommand{\map}[3][]{%
  \ifblank{#1}{
    \operatorname{Map}(#2,#3)}
  {
    \operatorname{Map}_{#1}(#2,#3)}}

\newcommand{\stab}[2][]{%
  \ifblank{#1}{
    \operatorname{Stab}(#2)}
  {
    \operatorname{Stab}_{#1}(#2)}}
\newcommand{\psh}[2][]{%
  \ifblank{#1}{
    \mathcal{P}(#2)}
  {
    \mathcal{P}_{#1}(#2)}}
\DeclareMathOperator{\opname}{op}
\DeclareMathOperator{\Ind}{Ind}
\newcommand{\addfinal}{\vartriangleright}
\newcommand{\ivl}{a}

\newcommand{\T}{\mathbb{T}}
\newcommand{\X}{\mathbb{X}}
\newcommand{\Z}{\mathbb{Z}}
\newcommand{\affine}{\mathbb{A}}
\newcommand{\projective}{\mathbb{P}}
\newcommand{\lcl}{\mathcal{L}}
\newcommand{\yon}{\operatorname{y}}
\newcommand{\on}[1]{\operatorname{#1}}
\newcommand{\op}{^{\opname}}
\DeclareMathOperator{\pbname}{PB}
\newcommand{\colim}{\mathop{\mathrm{colim}}}
\newcommand{\pb}{\pbname}
\newcommand{\pbco}{\pbname^{\operatorname{c}}}
\newcommand{\pbl}{\pbname^{\operatorname{c}}_{\lcl}}
\newcommand{\pbpr}{\pbname^{\operatorname{pr}}}
\newcommand{\pbprpt}{\pbname^{\operatorname{pr,pt}}}
\newcommand{\pbprst}{\pbname^{\operatorname{pr,pt}}_{\T}}
\newcommand{\pblpr}{\pbname^{\operatorname{pr}}_{\lcl}}
\newcommand{\pblprpt}{\pbname^{\operatorname{pr,pt}}_{\lcl}}
\newcommand{\pblprst}{\pbname^{\operatorname{pr,pt}}_{\lcl,\T}}
\newcommand{\pbpt}{\pbname^{\operatorname{pt}}}
\newcommand{\pbcopt}{\pbname^{\operatorname{c,pt}}}
\newcommand{\pbcost}{\pbname^{\operatorname{c,pt}}_{\T}}
\newcommand{\pblcopt}{\pbname^{\operatorname{c,pt}}_{\lcl}}
\newcommand{\pblcost}{\pbname^{\operatorname{c,pt}}_{\lcl,\T}}
\newcommand{\pbsm}{\pbname^{\operatorname{sm}}}
\newcommand{\pbgm}{C_{\textup{gm}}}
\newcommand{\pbgmop}{\tilde{C}_{\textup{gm}}}
\newcommand{\pbgmco}{\hat{C}_{\textup{gm}}}
\newcommand{\pbinit}{C_{\textup{init}}}
\newcommand{\CoSy}[1]{
  \operatorname{CoSy}_{#1}}
\newcommand{\CoCoSy}[1]{
  \operatorname{CoSy}^{\operatorname{c}}_{#1}}

\DeclareMathOperator{\ho}{Ho}

\DeclareMathOperator{\SHH}{\mathcal{SH}}
\newcommand{\pt}{*}

\newcommand{\nis}{\operatorname{Nis}}

\newcommand{\scat}{\operatorname{{\boldsymbol{\Delta}}}}
\newcommand{\sscat}{\operatorname{{\boldsymbol{\Delta}_{\mathrm{s}}}}}
\newcommand{\sch}[1]{
  \operatorname{Sch}_{#1}}
\newcommand{\schop}[1]{
  \operatorname{Sch}_{#1}^{\textup{op}}}
\newcommand{\sm}[1]{
  \operatorname{Sm}_{#1}}
\newcommand{\smop}[1]{
  \operatorname{Sm}_{#1}^{\opname}}
\newcommand{\Spc}{\mc{S}}
\newcommand{\Sq}{\operatorname{Sq}}
\newcommand{\sqCP}{\square_P}
\newcommand{\SqLAd}{\Sq^{\operatorname{LAd}}}
\newcommand{\SqRAd}{\Sq^{\operatorname{RAd}}}
\newcommand{\fin}{\operatorname{Fin}_*}
\newcommand{\isoto}{\overset{\sim}{\,\to\,}}
\newcommand{\xfrom}[1]{\xleftarrow{#1}}
\let\xto\xrightarrow
\newcommand{\mc}[1]{\mathcal{#1}}
\newcommand{\cC}{\mc{C}}
\newcommand{\cD}{\mc{D}}
\newcommand{\cE}{\mc{E}}

\newcommand{\cO}{\mc{O}}
\newcommand{\cS}{\mc{S}}
\newcommand{\cT}{\mc{T}}
\newcommand{\cU}{\mc{U}}
\newcommand{\cV}{\mc{V}}

\newcommand{\rrd}{%
\scalebox{.2}{  \begin{tikzpicture}
  \draw[very thick,->] (0,.5) -- (2,.5) -- (2,-0.5);
  \end{tikzpicture}
}}
\newcommand{\drr}{%
\scalebox{.2}{  \begin{tikzpicture}
  \draw[very thick,->] (0,.5) -- (0,-.5) -- (2,-0.5);
  \end{tikzpicture}
}}
\newcommand{\resp}[1]{%
  \textup{(}resp.\ #1\textup{)}\xspace}
\newcommand{\cf}{cf.\ }

\newcommand{\kw}{motivic homotopy theory, \texorpdfstring{$\affine^1$}{A¹}-homotopy, coefficient system, six operations, six-functor formalism}
\title{The Universal Six-Functor Formalism}
\AtEndPreamble{\hypersetup{
    pdfcreator    = {\LaTeX{}},
    pdfencoding   = auto,
    psdextra,
    pdfauthor     = {Brad Drew and Martin Gallauer},
    pdftitle      = {The Universal Six-Functor Formalism},
    pdfsubject    = {},
    pdfkeywords   = {\kw}}}

\begin{document}
\author{Brad Drew and Martin Gallauer\footnotewomarker{\textit{Keywords:} \kw}\thanks{Second-named author supported by a Titchmarsh Fellowship of the University of Oxford.}}
\date{}
\maketitle{}
\begin{abstract}
\noindent We prove that Morel-Voevodsky's stable $\affine^1$-homotopy theory affords the universal coefficient system, giving rise to Grothendieck's six operations.
\setcounter{tocdepth}{1}
\end{abstract}

\section{Introduction}
\label{sec:introduction}

\paragraph{Grothendieck's six operations}
\label{sec:six-operations}

Let $k$ be a field and $\ell$ a prime number different from the characteristic of~$k$.
Grothendieck and his collaborators constructed, for each separated finite type $k$-scheme~$X$, the bounded derived category of constructible $\ell$-adic sheaves~$\Dbc(X;\QQ_\ell)$ with a closed tensor structure $(\otimes, \underline{\mathrm{Hom}})$, and for each morphism $f:X\to Y$ a pushforward $f_*$ and a pushforward with compact support $f_!$ which participate in two pairs of adjunctions:
\begin{align*}
f^{*}:\Dbc(Y;\QQ_\ell)\rightleftarrows\Dbc(X;\QQ_\ell):f_{*},&& f_{!}:\Dbc(X;\QQ_\ell)\rightleftarrows\Dbc(Y;\QQ_\ell):f^{!}.
\end{align*}
Known as Grothendieck's six operations, these functors satisfy manifold relations, including smooth and proper base change, projection formul\ae{}, absolute purity, and a powerful form of duality. 
These structures and properties are summarily known as a \emph{six-functor formalism}.

Conversely, to a six-functor formalism $C$ one associates bigraded $C$-cohomology groups\footnote{Using the exceptional functors $\pi_!$ and $\pi^!$ instead, one similarly defines homology, cohomology with compact support, and Borel-Moore homology.}
\begin{align*}
  \Hm^{m}(X,n)&:= \Hom_{C(k)}(\one,\pi_*\pi^*\one(n)[m]),
\end{align*}
where $\pi:X\to\Spec(k)$ is the structure morphism.
Properties of the six functors imply properties of the associated (co)homology groups, including powerful results such as Poincar\'e duality and the Lefschetz trace formula.
For this reason we may view the six-functor formalism as an enhancement of the cohomology theory.

Such enhancements have long been recognized as crucial, allowing us to better study families of varieties and singular varieties, but even smooth projective varieties themselves.
For example, this shift from cohomology groups to sheaves can be seen powerfully in the generalization of the Riemann Hypothesis over finite fields which marks the passage from Deligne's Weil~I to~II~\cite{MR340258,MR601520}.
Six-functor formalisms have been established for many theories, including (in varying degree of completeness): $\ell$-adic cohomology (constructible $\ell$-adic sheaves), Betti cohomology (constructible analytic sheaves), de\,Rham cohomology (holonomic and arithmetic $\mathcal{D}$-modules), absolute Hodge cohomology (mixed Hodge modules), as well as motivic cohomology (motivic sheaves) and motivic homotopy (parametrized motivic spectra).

In a similar vein, Grothendieck's comparison between Betti and de\,Rham cohomology of complex algebraic varieties, $\Hm_{\textup{B}}\cong\Hm_{\textup{dR}}$, is but a shadow of an equivalence between the theories of constructible sheaves and of regular holonomic $\mathcal{D}$-modules,
\[
\Dbc(X,\mathbb{C})\simeq\Dbrh(X),
\]
which is one version of the Riemann-Hilbert correspondence.
Similarly, classical regulators should underlie realization functors which are compatible with the six operations, that is, they should underlie morphisms of six-functor formalisms.

\paragraph{Universality}
Motivated by these considerations, we study six-functor formalisms with the eventual goal of identifying the universal such as being given by Morel-Voevodsky's stable motivic $\affine^1$-homotopy theory~$\SHH{}$. 
Before stating our result precisely, let us mention some of its precursors.
In algebraic topology it is well-known (Brown representability) that every generalized cohomology theory~$E$ is represented by a spectrum~$\mathcal{E}$.
If one interprets $E$ as a functor valued in homotopy types this leads to a perfect correspondence $E\leftrightarrow\mathcal{E}$, and thus to a characterization of spectra.
Further taking into account the smash product, Lurie shows that the \icat~$\mathrm{SH}$ of spectra is the universal stable presentably symmetric monoidal \icat~\cite[Corollary~4.8.2.19]{Lurie_higher-algebra}.

Morel and Voevodsky developed $\affine^1$-homotopy theory in order to provide algebraic geometers with the powerful tools of algebraic topology.
Spectra are replaced by motivic spectra over a field~$k$, and $\mathrm{SH}$ is replaced by $\SHH(k)$.
One characterization of this \icat is due to Robalo~\cite{Robalo_K-theory-and-the-bridge} and exhibits it as the universal stable presentably symmetric monoidal \icat which receives a functor $\sm{k}\to \SHH(k)$ from smooth $k$-schemes satisfying K\"unneth, excision, homotopy invariance and stability.

To state our result we need to formalize the notion of six-functor formalisms.
As with the results just discussed it is necessary to base it on enhanced triangulated categories, such as stable \icats.
Fix a noetherian finite-dimensional base scheme~$B$, for example $B=\Spec(k)$ or $B=\Spec(\Z)$. 
A \emph{coefficient system} is a functor $C:\schop{B}\to\calg{\iCATst}$ on finite type $B$-schemes with values in symmetric monoidal stable \icats and symmetric monoidal exact functors, satisfying a list of axioms.
All these axioms can be tested at the level of underlying tensor triangulated categories, in which case they coincide with those for \emph{closed symmetric monoidal stable homotopy 2-functors} in the sense of Ayoub~\cite{Ayoub_six-operationsI}.
It follows from the main result of \loccit that any coefficient system underlies a six-functor formalism, in that it comes with the six functors and these satisfy many of the relations familiar from the $\ell$-adic theory.
We refer to \Cref{sec:CoSy} for precise statements and a comparison with related notions in the literature.

We let $\CoCoSy{B}$ denote the \icat of cocomplete coefficient systems (\Cref{defn:CoCoSy}), namely those taking values in cocomplete stable \icats, and morphisms preserving colimits.
It follows essentially from~\cite{Morel-Voevodsky_A1-homotopy-theory} that the functor $X\mapsto\SHH(X)$ is a cocomplete coefficient system.
Our main result characterizes it as follows:
\newtheorem*{thm:main}{Theorem~\ref{sta:SH-initial}}
\begin{thm:main}
The object $\SHH\in\CoCoSy{B}$ is initial.
\end{thm:main}
Universality is an important aspect of motivic theories. 
After all, motives and motivic homotopy types are supposed to capture the cohomological and homotopical \emph{essence} of algebraic varieties.
From this point of view, \Cref{sta:SH-initial} constitutes a remarkable validation of this principle for Morel-Voevodsky's $\affine^1$-homotopy theory.
One \textsl{caveat} is that morphisms of coefficient systems do not in general commute with \emph{all} six functors, although they do in many important situations~\cite[\S\,3]{Ayoub_operations-de-Grothendieck}.

\Cref{sta:SH-initial} also ties in with the approach that views `generalized multiplicative cohomology theories' on schemes as arising from motivic ring spectra, for example in~\cite{MR2435654,MR2900540,MR3865569}.
In~\cite{Cisinski-Deglise_mixed-motives,Drew_motivic-hodge} this is applied to various realizations of motives and motivic homotopy types (including a Hodge realization) that are compatible with the six functors.

\paragraph{Outline (of the proof and the article)}

Given a cocomplete coefficient system~$C$, we need to show that the space of morphisms $\SHH\to C$ is contractible.
For this we embed $\CoCoSy{B}$ fully faithfully into a larger \icat $\mathcal{P}$ which is better behaved (for starters, it is presentable).
This is \Cref{sta:cosy-pb} and amounts to the well-known observation that the localization axiom for six-functor formalisms implies non-effective Nisnevich descent.
The rather formal \Cref{sec:cocompletion,sec:desc-homot-invar,sec:stability} contain a `$\schop{B}$-parametrized' version of the proof of Robalo's result above, and they show that each of the steps in the construction of $\SHH$ may be viewed as a universal way of enforcing (some of) the axioms in coefficient systems (more precisely, the weaker axioms that define the objects of $\mathcal{P}$).
By this observation, summarized in \Cref{thm:summary}, we may place ourselves in the setting of \emph{pullback formalisms} (see \Cref{sec:pullback-formalisms}) which encode only the bare minimum of coefficient systems.
We are then reduced to showing that the functor $\pbgm:X\mapsto \sm{X}$, that takes a finite type $B$-scheme~$X$ to the category of smooth $X$-schemes with the Cartesian monoidal structure, is the initial pullback formalism.
The gain is this: The \icat of pullback formalisms is presentable and therefore has an initial object $\pbinit$.
As every pullback formalism, $\pbinit$ comes with an internal notion of `homology sheaf' $[Y]\in\pbinit(X)$ associated with every smooth morphism $Y\to X$ in $\sch{B}$, there is a morphism $\morgm{-}:\pbgm\to\pbinit$.
Since $\pbinit\to\pbgm\to\pbinit$ is homotopic to the identity, it suffices to show that $\pbgm\to\pbinit\to\pbgm$ is too.
But this composite is now an endomorphism of an essentially $1$-categorical object, and proving that it is homotopic to the identity is a fairly concrete problem which we solve by exhibiting an explicit homotopy (\Cref{sta:pbgm-initial}).

Much of the proof just outlined is not specific to schemes or even algebraic geometry.
For potential future applications in other contexts we have therefore decided to work in greater generality throughout, in which we replace the category of schemes by a $1$-category~$S$ with a chosen subcategory $P\subseteq S$ of `smooth morphisms'.
It should be possible to further allow $S$ to be an \icat as long as one is able to exhibit a homotopy as mentioned above.
This would be useful for example in derived algebraic geometry.
A more serious restriction is that the symmetric monoidal structure on~$S$ is required to be Cartesian for our proof to work.
While this is often satisfied in practice, there are interesting examples of geometric categories that fail this assumption.
For example, this is true of varieties with potential which give rise to exponential motivic homotopy theory.

Our arguments strongly rely on some important works in higher category theory and motivic theory.
We single out: Lurie's foundations for \icats and higher algebra~\cite{Lurie_higher-topos,Lurie_higher-algebra}, in particular his straightening\,/\,unstraightening technique which we employ throughout; needless to say, Morel-Voevodsky's introduction of motivic homotopy theory~\cite{Morel-Voevodsky_A1-homotopy-theory} which gives rise to the object whose universality we prove; Ayoub's successful formalization of six-functor formalisms~\cite{Ayoub_six-operationsI,Ayoub_six-operationsII} that leads directly to the notion of coefficient systems we use.

As a last note, the reader less interested in the (higher-)cate\-go\-ri\-cal details may skip directly to \Cref{sec:SH} which conveniently summarizes the general results of the preceding sections and applies them to the case of interest.

\paragraph{Acknowledgments}

We are very grateful to the anonymous referee for their careful reading of the article
and the many corrections and suggestions.
We also thank Marc Hoyois for pointing out a mistake in an earlier version of this article, see \Cref{fn:consistency}.

\paragraph{Notation and conventions}
\hypertarget{notn-conv}{}
\label{notn-conv}
As our model for \icats we will use quasi-categories, and our notation and conventions will mostly follow those of~\cite{Lurie_higher-topos,Lurie_higher-algebra}.
Ordinary categories are systematically identified with the associated \icats \textsl{via} the nerve functor.
So, for example, if $\mathcal{C}$ is an ordinary category then $\mathcal{C}_1$ denotes the set of its morphisms.

We fix two universes $\cU\in\cV$.
Sets are called \emph{small} \resp{\emph{large}} if they are $\cU$-small \resp{$\cV$-small}.
If the context does not dictate otherwise, sets (and derived notions such as rings, modules, schemes, \ldots) are small.
So, for example, without further qualification, presheaves on an \icat take values in \emph{small} spaces.
Correspondingly we have \icats of small, large, and arbitrary \icats:
\[
\iCat\subset\iCAT\subset\ICAT
\]
We denote by $\iCATco$ the \subicat of $\iCAT$ of large \icats which admit small colimits, and functors which preserve small colimits.
Its full \subicat on presentable \icats is denoted by $\PrL$.
We consider all of these \icats with their usual symmetric monoidal structures of~\cite[\S\,4.8]{Lurie_higher-algebra}.

We will frequently deal with \icats that are not large.
We call these \emph{very large}.
The ones we care about most (such as $\iCAT$ above) are, however, controlled by large \icats.
More precisely, they are $\cV$-presentable.
Note that a very large \icat \emph{cannot} be $\cU$-presentable so that we feel entitled to just write ``the very large \icat $\iCAT$ is presentable'', trusting that this slight abuse of language will not be confusing.

\section{Pullback formalisms}
\label{sec:pullback-formalisms}
In this section, we introduce the context in which all the subsequent discussion in the main body of the text will take place.

\begin{notn}
\label{notn:partially-adjointable}
Throughout the article, we fix the following data and hypotheses:
\begin{itemize}
\item
$S$, a small ordinary category which is finitely complete, with final object denoted~$\1{S}$;
\item
$P \subseteq S$, a subcategory containing all isomorphisms, and stable under pullbacks along all morphisms of~$S$.
\end{itemize}
\end{notn}

\begin{exa}
\label{exa:SP-schemes}
As explained in the introduction, the main example we have in mind for $S$ is $\sch{B}$, the category of finite type $B$-schemes, and for $B$ a noetherian finite-dimensional base scheme.
In which case our main example for $P$ is the subcategory of smooth morphisms.
\end{exa}

\subsection{Partial pullback formalisms}
\label{sec:non-adjo-pullb}

\begin{defn}
A \emph{partial pullback formalism (over $S$)} is a functor $C: S\op \to \calg{\iCAT}$,
i.e., a diagram of symmetric monoidal \icats and symmetric monoidal functors,
and a \emph{morphism of partial pullback formalisms $\phi: C \to C'$ (over $S$)} is a symmetric monoidal natural transformation.
In other words, the \emph{\icat of partial pullback formalisms (over $S$)} is the very large \icat $\fun{S\op}{\calg{\iCAT}}$.
\end{defn}

\begin{notn}
Let $\phi: C \to C': S\op \to \calg{\iCAT}$ be a morphism of partial pullback formalisms.
We adopt the following slightly abusive conventions:
\begin{itemize}
\item
for each $s \in S$,
we denote by $C(s)^{\otimes}$ the associated symmetric monoidal \icat classified;
\item
for each morphism $f: s' \to s$ of $S$,
we denote by $f^*: C(s) \to C(s')$ the associated symmetric monoidal functor; and
\item
for each $s \in S$,
we denote by $\phi_s$ or $\phi: C(s) \to C'(s)$ the associated symmetric monoidal functor.
\end{itemize}
\end{notn}

\begin{rmk}
\label{rmk:calg-cat}%
The very large \icat $\iCAT$ is presentable\footnote{Recall our size conventions on \Cpageref{notn-conv}.}, and the Cartesian product $(\iCAT)^2\to\iCAT$ preserves (large) colimits in each variable separately.
It follows that the very large $\calg{\iCAT}$ is presentable, and the forgetful functor
\[
\calg{\iCAT}\to\iCAT
\]
detects (large) limits and sifted colimits~\cite[Corollaries~3.2.2.5 and 3.2.3.2]{Lurie_higher-algebra}. 
In particular, it has a left adjoint~\cite[Example~3.1.3.14]{Lurie_higher-algebra}.
\end{rmk}

\begin{prop}
\label{fun-pres}%
The very large \icat $\fun{S\op}{\calg{\iCAT}}$ of partial pullback formalisms over $S$ is presentable.
\end{prop}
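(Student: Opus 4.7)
The plan is to reduce the statement to the standard fact that functor \icats into a presentable target are themselves presentable, applied in the appropriate universe. The source $S\op$ is small by hypothesis, so no size subtleties arise from the domain; everything reduces to knowing that the target $\calg{\iCAT}$ is presentable, which is precisely what \Cref{rmk:calg-cat} provides (with the caveat that ``presentable'' here is understood in the universe of very large \icats, as flagged by the paper's footnote).

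Concretely, I would argue as follows. First, invoke \Cref{rmk:calg-cat} to record that $\calg{\iCAT}$ is a very large presentable \icat. Second, invoke the general principle \cite[Proposition~5.5.3.6]{Lurie_higher-topos}: for any presentable \icat $\cD$ and any small \icat $K$, the functor \icat $\fun{K}{\cD}$ is again presentable, with small (in the ambient universe) colimits and limits computed pointwise. Apply this with $\cD = \calg{\iCAT}$ and $K = S\op$, noting that $S$ is a small ordinary category by \Cref{notn:partially-adjointable}, so $S\op$ is a small \icat. This immediately yields that $\fun{S\op}{\calg{\iCAT}}$ is a very large presentable \icat, which is the content of the proposition.

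The only point that requires any care, and the one I would flag most explicitly, is the universe bookkeeping: the cited presentability result from \cite{Lurie_higher-topos} is phrased for presentable \icats in the sense of $\PrL$, but its proof is entirely formal and carries over verbatim to the larger universe, once one has the analogous accessibility and cocompleteness statements for $\calg{\iCAT}$ given by \Cref{rmk:calg-cat}. Since the paper has already committed to working with the ``very large'' notion of presentability for $\iCAT$ and $\calg{\iCAT}$, I would simply cite \cite[Proposition~5.5.3.6]{Lurie_higher-topos} and indicate that it is to be read in the larger universe, without repeating the proof. No further combinatorial input is needed, and there is no real obstacle beyond tracking universes.
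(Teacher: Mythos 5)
Your proposal is correct and follows the paper's own argument exactly: the paper likewise cites \Cref{rmk:calg-cat} for the presentability of $\calg{\iCAT}$ and then invokes \cite[Proposition~5.5.3.6]{Lurie_higher-topos} to transfer presentability to the functor \icat $\fun{S\op}{\calg{\iCAT}}$. Your extra remark on universe bookkeeping is a reasonable elaboration of the same step, not a different route.
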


\begin{proof}
By~\cite[Proposition~5.5.3.6]{Lurie_higher-topos}, the functor \icat $\fun{S\op}{\calg{\iCAT}}$ inherits presentability from $\calg{\iCAT}$ (\Cref{rmk:calg-cat}).
\end{proof}

\begin{exa}
\label{exa:constant-initial}
The \icat $\calg{\iCAT}$ admits an initial object,
given by the final category $\pt$ equipped with the Cartesian monoidal structure. 
The constant diagram $s \mapsto \pt$ is therefore an initial object of the \icat $\fun{S\op}{\calg{\iCAT}}$.
\end{exa}

\begin{exa}
\label{exa:opposite-pb}%
Let $C:S\op\to\calg{\iCAT}$ be a functor.
We may compose it with the functor which takes a symmetric monoidal \icat to its opposite with the opposite symmetric monoidal structure, see~\cite[Remark~2.4.2.7]{Lurie_higher-algebra}.
The resulting functor will be denoted by $\tilde{C}:S\op\to\calg{\iCAT}$.
\end{exa}

\begin{rmk}
\label{pb-formalism-encoding-informal}%
Let $C:S\op\to\calg{\iCAT}$ be a functor.
As recalled in \Cref{sec:symm-mono-unstr} we may view it equivalently as an $S\op$-monoidal \icat $\cC^\boxtimes\to S^{\opname,\amalg}$.
Informally, $\cC^\boxtimes$ admits the following description (\Cref{tensor-un-straightening-informal}):
\begin{itemize}
\item objects of $\cC$ are pairs $(s,M)$ where $s\in S$ and $M\in C(s)$;
\item a morphism $(s,M)\to (t,N)$ is a morphism $f:t\to s$ in $S$ and a morphism $f^*M\to N$ in~$C(t)$;
\item the tensor product of $(s,M)$ and $(t,N)$ is the external product $(s\times t,M\boxtimes N)$.
\end{itemize}
\end{rmk}

\begin{rmk}
\label{pb-formalism-dual-encoding-informal}%
In the sequel, the combination of the constructions in \Cref{pb-formalism-encoding-informal} and \Cref{exa:opposite-pb} will be most useful to us.
The $S\op$-monoidal \icat associated to $\tilde{C}$ will be denoted by $\tilde{\cC}^\boxtimes\to S^{\opname,\amalg}$, and we call it the \emph{opposite $S\op$-monoidal \icat} associated to $C$.
Informally, we may describe $\tilde{\cC}$ as follows.
It has the same objects as $\cC$ and the tensor product is the external product.
It differs from $\cC$ in that
\begin{itemize}
\item a morphism $(s,M)\to (t,N)$ is a morphism $f:t\to s$ in $S$ and a morphism $N\to f^*M$ in~$C(t)$.
\end{itemize}

\end{rmk}
\subsection{Adjointability}
\label{sec:adjointability}

We start by giving a slightly informal definition of pullback formalisms.
This will be made more precise in the sequel (cf.\,\Cref{rmk:pb-adjointable-defn}).
For similar notions in the $1$-categorical context, see~\cite[\S\,I.1]{Cisinski-Deglise_mixed-motives}.
\begin{defn}
\label{defn:pb-adjointable}%
A \emph{pullback formalism (over $S$)} is a functor $C:S\op\to\calg{\iCAT}$ that is \emph{$P$-adjointable}, namely:
\begin{enumerate}[(1),ref=(\arabic*)]
\item
\label{item:pb-adjointable-E}
for each morphism $p:s'\to s\in P$, the functor $p^*:C(s)\to C(s')$ admits a left adjoint $p_\sharp$;
\item
\label{item:pb-adjointable-bc}
for each cartesian square
\[
\begin{tikzcd}
s
&
s'
\ar[l, "p" above]
\\
t
\ar[u, "f"]
&
t'
\ar[l, "p'" above]
\ar[u, "f'" right]
\end{tikzcd}
\]
in $S$ with $p$ (and hence $p'$) in $P$, the exchange transformation $p'_\sharp (f')^*\to f^*p_\sharp$ is an equivalence;
\item
\label{item:pb-adjointable-pf}
the exchange transformation
\[
p_\sharp(p^*(-)\otimes -)\to -\otimes p_\sharp(-)
\]
is an equivalence.
\end{enumerate}
We say that $C$ satisfies \emph{$P$-base change} \resp{the \emph{$P$-projection formula}} if it satisfies~\ref{item:pb-adjointable-E} and \ref{item:pb-adjointable-bc} \resp{\ref{item:pb-adjointable-E} and~\ref{item:pb-adjointable-pf}}.

A \emph{morphism of pullback formalisms (over $S$)} is a natural transformation $\phi:C\to C'$ such that for each $p:s'\to s$ in $P$, the exchange transformation $p_\sharp\phi\to\phi p_\sharp$ is an equivalence.
This defines a \subicat $\pb:=\pb(S,P)\subseteq\fun{S\op}{\calg{\iCAT}}$.
\end{defn}

The main result of this section is the following.
\begin{prop}
\label{sta:pb-presentable}
The very large \icat $\pb$ of pullback formalisms is presentable,
and the inclusion
\[
\pb  \hookrightarrow \fun{S\op}{\calg{\iCAT}}
\]
admits a left adjoint.
\end{prop}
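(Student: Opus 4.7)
The plan is to realize $\pb$ as an accessibly embedded \subicat of $\fun{S\op}{\calg{\iCAT}}$ closed under small limits, and then invoke the adjoint functor theorem \cite[Corollary~5.5.2.9]{Lurie_higher-topos} to obtain both the presentability of $\pb$ and the left adjoint to the inclusion. The ambient \icat is presentable by \Cref{fun-pres}, so only the closure properties need checking. The subtlety is that $\pb$ is not a \emph{full} \subicat of $\fun{S\op}{\calg{\iCAT}}$: there is an object condition (axioms \ref{item:pb-adjointable-E}--\ref{item:pb-adjointable-pf}) and an additional morphism condition (compatibility with the left adjoints $p_\sharp$ for $p\in P$).

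The strategy is to rephrase each of these conditions as an adjointability assertion in the sense of \cite[\S\,4.7.4]{Lurie_higher-algebra}. Condition \ref{item:pb-adjointable-E} amounts to asking that a specific morphism in $\iCAT$ admit a left adjoint; the key input (a consequence of the straightening/unstraightening equivalence and the analysis of adjointable morphisms in \emph{loc.\,cit.}) is that the \subicat of $\fun{\Delta^1}{\iCAT}$ whose objects are left-adjointable functors and whose morphisms are those squares which commute with the chosen left adjoints is both presentable and accessibly embedded in $\fun{\Delta^1}{\iCAT}$. Conditions \ref{item:pb-adjointable-bc} and \ref{item:pb-adjointable-pf} each ask that a canonical exchange transformation, built out of this adjoint data, be an equivalence --- which is plainly a limit-closed and accessible condition. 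The morphism condition in the definition of $\pb$ is the same kind of equivalence requirement on an exchange transformation, and so is treated uniformly.

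Because the indexing data (the collection of $p\in P$, together with cartesian squares over them) is small and because limits in $\fun{S\op}{\calg{\iCAT}}$ are computed pointwise, $\pb$ is realized as a small intersection of reflective, accessibly embedded \subicats of $\fun{S\op}{\calg{\iCAT}}$. Such intersections remain reflective and accessibly embedded, so $\pb$ is presentable by \cite[Proposition~5.5.1.2]{Lurie_higher-topos}, and the inclusion admits a left adjoint.

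The main obstacle I anticipate is the rigorous encoding of condition \ref{item:pb-adjointable-E} --- the existence of the left adjoint $p_\sharp$ together with its compatibility under morphisms --- as cutting out an accessibly embedded \subicat of the relevant functor \icat. Once this is in place via the machinery of cartesian/cocartesian fibrations and adjointable morphisms from \cite[\S\,4.7.4]{Lurie_higher-algebra}, the remaining Beck--Chevalley-type conditions \ref{item:pb-adjointable-bc}, \ref{item:pb-adjointable-pf} and the morphism compatibility are handled uniformly as requirements that specific natural transformations become equivalences.
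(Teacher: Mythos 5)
Your overall strategy coincides with the paper's: both encode the defining conditions of $\pb$ as adjointability conditions on squares in the sense of \cite[\S\,4.7.4]{Lurie_higher-algebra}, and deduce presentability together with the left adjoint from presentability of $\fun{S\op}{\calg{\iCAT}}$ (\Cref{fun-pres}) and the fact that $\SqLAd\into\Sq$ is a presentable, accessibly embedded (non-full) \subicat whose inclusion admits a left adjoint \cite[Corollary~4.7.4.18]{Lurie_higher-algebra}. The gap is in where you locate the difficulty. You flag the encoding of condition~\ref{item:pb-adjointable-E} as the main obstacle; in fact that part, together with the morphism condition of \Cref{defn:pb-adjointable}, is precisely what \cite[\S\,4.7.4]{Lurie_higher-algebra} hands you, since the morphisms of $\SqLAd$ are by definition the squares commuting with the chosen left adjoints. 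What you dismiss as ``plainly a limit-closed and accessible condition'' is the genuinely non-trivial step: before one can say that the base-change and projection-formula requirements cut out an accessible, limit-closed piece, one must produce functors $\on{bc}_q,\on{pf}_p:\fun{S\op}{\calg{\iCAT}}\to\Sq$ assigning to each $C$ the relevant square \emph{functorially in $C$}, and then check that these functors are right adjoints (accessible and limit-preserving), so that $\pb$, realized as the pullback of $\prod_{\sqCP\amalg P_1}\SqLAd\into\prod_{\sqCP\amalg P_1}\Sq$ along $\on{bc}\times\on{pf}$ as in \Cref{rmk:pb-adjointable-defn}, inherits presentability. For base change this is easy (forget the monoidal structure and restrict along $q$), but for the projection formula the square~\eqref{eq:pf-diagram} mixes the tensor product with $p^*$ through the functor $\pblank\otimes p^*\pblank$, and its functoriality in $C$ is not formal: the paper's \Cref{sta:pf-functor} constructs $\on{pf}_p$ by an explicit chain of restrictions and right Kan extensions and verifies right-adjointness along the way. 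Your proposal contains no substitute for this construction.

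A smaller point: since $\pb$ is not a full \subicat of $\fun{S\op}{\calg{\iCAT}}$, the familiar statement that a small intersection of reflective, accessibly embedded subcategories is again reflective and accessibly embedded does not apply verbatim (it is formulated for full subcategories), and your appeal to it needs justification. The paper sidesteps this by exhibiting $\pb$ as a limit in $\ICAT$ of a diagram of very large presentable \icats and right adjoint functors, and using that these are closed under limits; once the functoriality and right-adjointness of $\on{bc}$ and $\on{pf}$ are in place, your conclusion goes through in that form.
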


The proof of \Cref{sta:pb-presentable} involves some ideas which will recur repeatedly in this article, and we therefore treat it in detail.
We start by reinterpreting \Cref{defn:pb-adjointable}.

\begin{notn}
\label{notn:adj}%
We denote by $\Sq=\fun{\Delta^1\times\Delta^1}{\iCAT}$ the \icat of squares
\begin{equation}
\label{eq:Sq}
\begin{tikzcd}
\cC_{00}
\ar[r, "f^*"]
\ar[d, "g^*"]
&
\cC_{10}
\ar[d, "h^*"]
\\
\cC_{01}
\ar[r, "k^*"]
&
\cC_{11}
\end{tikzcd}
\end{equation}
which commute up to a specified equivalence $h^*\circ f^*\simeq k^*\circ g^*$.\footnote{As usual, this last piece of data is really a short-hand for a choice of diagonal morphism $\delta:\cC_{00}\to\cC_{11}$ as well as two homotopies $h^*\circ f^*\simeq \delta\simeq k^*\circ g^*$. We sometimes also refer to a functor with domain $\Delta^1\times\Delta^1$ valued in an \icat as an \emph{essentially commutative square}.}
Recall (\cite[Definition~4.7.4.13]{Lurie_higher-algebra}) that~\eqref{eq:Sq} is left adjointable if $f^*$ and $k^*$ admit left adjoints $f_\sharp$ and $k_\sharp$, respectively, and if the associated exchange (or Beck-Chevalley) transformation
\[
k_\sharp h^*\to k_\sharp h^* f^* f_\sharp\simeq k_\sharp k^* g^* f_\sharp\to g^*f_\sharp
\]
is an equivalence.

A morphism of left adjointable squares $\alpha:\cC_{\bullet\bullet}\to \cC'_{\bullet\bullet}$ is a morphism of squares such that the two exchange transformations $f'_\sharp\alpha_{10}\to\alpha_{00}f_\sharp$ and $k'_\sharp\alpha_{11}\to\alpha_{01}k_\sharp$ are equivalences.
In other words, such a morphism fits into a cube
\setlength{\perspective}{2pt}
\[
\begin{tikzcd}[row sep={20,between origins}, column sep={40,between origins}]
&[-\perspective] \cC_{00}
\ar[rr]
\ar[dd]
\ar[dl]
&[\perspective] &[-\perspective]
\cC_{10}
\ar[dd]
\ar[dl]
\\[-\perspective]
\cC'_{00}
\ar[crossing over]{rr}
\ar[dd]
&&
\cC'_{10}
\\[\perspective]
&
\cC_{01}
\ar[rr]
\ar[dl]
&&
\cC_{11}
\ar[dl]
\\[-\perspective]
\cC'_{01}
\ar[rr]
&&
\cC'_{11}
\ar[from=uu,crossing over]
\end{tikzcd}
\]
in which the top and the bottom face are left adjointable (in addition to the front and back).
This defines a \subicat $\SqLAd\subseteq\Sq$ which, with the notation of~\cite[Definition~4.7.4.16]{Lurie_higher-algebra}, we may also write as $\fun{\Delta^1}{\fun[\textup{LAd}]{\Delta^1}{\iCAT}}$.
Similarly we define the \icat of right adjointable squares $\SqRAd\subseteq\Sq$.
\end{notn}

\begin{notn}
We denote by $\sqCP$ the set of Cartesian squares
\begin{equation}
\label{eq:sqcp}%
\begin{tikzcd}
s
&
s'
\ar[l, "p" above]
\\
t
\ar[u, "f" left]
&
t'
\ar[l, "p'" above]
\ar[u, "f'" right]
\end{tikzcd}
\end{equation}
of $S$ such that $p$ and, hence, $p'$ belong to $P$.
\end{notn}

\begin{rmk}
\label{rmk:base-change}%
Let $q \in \sqCP$.
Regarding $q$ as a functor $\Delta^1 \times \Delta^1 \to S$,
pre-composition with $q$ and post-composition with the forgetful functor $\calg{\iCAT} \to \iCAT$ together determine a functor
\[
\on{bc}_q:
  \fun{S\op}{\calg{\iCAT}}
    \to \Sq.
\]
The functors $\on{bc}_q$ for $q \in \sqCP$ together determine a functor
\[
\on{bc}:
  \fun{S\op}{\calg{\iCAT}}
    \to \prod_{\sqCP} \Sq.
\]
For each $C: S\op \to \calg{\iCAT}$,
the condition that the squares of $\on{bc}(C)$ belong to the \subicat $\SqLAd \subseteq \Sq$
is the condition that $C$ satisfies $P$-base change.
\end{rmk}

\begin{rmk}
\label{rmk:projection-formula}%
Let $p: s' \to s$ be a morphism of $P$.
The functor $p^*: C(s) \to C(s')$ is symmetric monoidal,
so we have an essentially commutative square
\begin{equation}
\label{eq:pf-diagram}%
\begin{tikzcd}[column sep = large]
C(s) \times C(s)
\ar[r, "p^* \times \id" below]
\ar[d, "\otimes" left]
&
C(s') \times C(s)
\ar[d, "\pblank \otimes p^*\pblank" right]
\\
C(s)
\ar[r, "p^*" above]
&
C(s')
\end{tikzcd}
\end{equation}
in $\iCAT$.
This square is classified by a functor $\on{pf}_p(C): \Delta^1 \times \Delta^1 \to \iCAT$.
As we prove in \Cref{sta:pf-functor} below, the assignment $p \mapsto \on{pf}_p$ determines a functor
\[
\on{pf}:
  \fun{S\op}{\calg{\iCAT}}
    \to \prod_{P_1} \Sq.
\]
For each $C: S\op \to \calg{\iCAT}$,
the condition that the squares of $\on{pf}(C)$ belong to the \subicat $\SqLAd \subseteq \Sq$
is the condition that $C$ satisfies the $P$-projection formula.
\end{rmk}

\begin{rmk}
\label{rmk:pb-adjointable-defn}%
Using \Cref{rmk:base-change,rmk:projection-formula}, the \icat of pullback formalisms is seen to fit into a Cartesian square in $\ICAT$:
\begin{equation}
\label{eq:pb-adjointable-cartesian}
\begin{tikzcd}
\pb
\ar[r, hookrightarrow]
\ar[d]
&
\fun{S\op}{\calg{\iCAT}}
\ar[d, "\on{bc}\times\on{pf}"]
\\
\displaystyle\prod_{\sqCP \amalg P_1} \SqLAd
\arrow[hookrightarrow]{r}
&
\displaystyle\prod_{\sqCP \amalg P_1} \Sq
\end{tikzcd}
\end{equation}
\end{rmk}

\begin{lem}
\label{sta:pf-functor}%
Let $p:s'\to s$ be a morphism in $P$.
The association $C\mapsto \on{pf}_p(C)$ of \Cref{rmk:projection-formula} underlies a functor
\[
\on{pf}:\fun{S\op}{\calg{\iCAT}} \to \Sq.
\]
Moreover, this functor admits a left adjoint.
\end{lem}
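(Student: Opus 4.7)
The plan is to construct $\on{pf}$ as a composite of two functors and then invoke the adjoint functor theorem. The morphism $p: s' \to s$ in $P$ corresponds to an object of $\fun{\Delta^1}{S\op}$, so restriction along it yields an evaluation functor
\[
\on{ev}_p: \fun{S\op}{\calg{\iCAT}} \to \fun{\Delta^1}{\calg{\iCAT}}
\]
carrying $C$ to the symmetric monoidal morphism $p^*: C(s)^{\otimes} \to C(s')^{\otimes}$. I would then produce a functor
\[
\Psi: \fun{\Delta^1}{\calg{\iCAT}} \to \Sq
\]
assigning to $F: A^{\otimes} \to B^{\otimes}$ its projection formula square \eqref{eq:pf-diagram}, and set $\on{pf} := \Psi \circ \on{ev}_p$.

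To construct $\Psi$, I would use the identification of $\calg{\iCAT}$ with commutative monoid objects in $(\iCAT, \times)$. A symmetric monoidal functor $F$ is then a monoid morphism; the operadic data over the active map $\langle 2 \rangle \to \langle 1 \rangle$ in $\fin$ yields the multiplications $\mu_A, \mu_B$ and the compatibility equivalence $\mu_B \circ (F \times F) \simeq F \circ \mu_A$, all functorially in $F$. Composing with $\id_A$ on the second factor then produces the projection formula square. The cleanest packaging uses the identification $\fun{\Delta^1}{\calg{\iCAT}} \simeq \calg{\fun{\Delta^1}{\iCAT}}$ to exhibit $\Psi$ as extracting a multiplication diagram from a commutative monoid object in an arrow \icat.

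For the left adjoint, I would apply the adjoint functor theorem. The source is presentable by \Cref{fun-pres}, and $\Sq = \fun{\Delta^1 \times \Delta^1}{\iCAT}$ is presentable by~\cite[Proposition~5.5.3.6]{Lurie_higher-topos} together with presentability of $\iCAT$ (\Cref{rmk:calg-cat}). The functor $\on{pf}$ preserves small limits: evaluation does, and $\Psi$ uses only operations (binary products of \icats, the monoid multiplications, and horizontal composition with $F$) that preserve limits when tracked through the limit-detecting forgetful functor $\calg{\iCAT} \to \iCAT$ of \Cref{rmk:calg-cat}. Accessibility follows by the same pointwise description, since this forgetful functor also preserves sifted colimits.

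The main obstacle is the coherent construction of $\Psi$: writing down the square on underlying \icats is easy, but producing it as a functor $\Delta^1 \times \Delta^1 \to \iCAT$ varying naturally in $F$ requires that the equivalence filling the square also vary coherently. One must therefore work with the full operadic data of $F$ rather than just its underlying arrow, for instance by realizing the square as a suitable diagram on the cocartesian fibration over $\fin$ classifying a commutative monoid in $\fun{\Delta^1}{\iCAT}$.
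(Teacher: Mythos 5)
Your overall architecture is the right one and in fact parallels the paper's: reduce to the commutative-monoid datum of $C$ restricted along $p$ (your identification $\fun{\Delta^1}{\calg{\iCAT}}\simeq\calg{\fun{\Delta^1}{\iCAT}}$ is exactly the paper's functor $\chi_p$ into $\fun{\Delta^1\times\fin}{\iCAT}$), then extract the square, then get the left adjoint from presentability. But there is a genuine gap at precisely the point you flag as ``the main obstacle,'' and your proposed remedy does not close it. The troublesome part of \eqref{eq:pf-diagram} is the right-hand column: the vertex $C(s')\times C(s)$ together with the functor $\pblank\otimes p^*\pblank$. The monoid datum $\Delta^1\times\fin\to\iCAT$ only has values which are powers of $C(s)$ alone or of $C(s')$ alone (Segal conditions), never a \emph{mixed} product across the two objects; consequently no restriction along a map of simplicial sets into $\Delta^1\times\fin$, and no diagram drawn fiberwise on the coCartesian fibration classifying the monoid, produces this vertex and arrow functorially in $C$. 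The paper's proof supplies the missing device: restrict the monoid datum along $v:\Delta^1\times\Lambda^2_0\to\Delta^1\times\fin$ (recording $p^*$, the multiplications $\otimes$, and the projections $\pi_2$), then \emph{right Kan extend} along the poset inclusion $w:\Delta^1\times\Lambda^2_0\hookrightarrow W$, where $W$ has one extra vertex whose Kan-extended value is computed to be $C(s')\times C(s)$, with the adjacent arrows giving $p^*\times\id$ and $\id\times p^*$; finally restrict along $\Delta^1\times\Delta^1\hookrightarrow W$ to obtain \eqref{eq:pf-diagram}. Your closing suggestion (``realizing the square as a suitable diagram on the cocartesian fibration over $\fin$'') restates the coherence problem rather than solving it, and since the functoriality of $C\mapsto\on{pf}_p(C)$ \emph{is} the content of the lemma, this is a real gap rather than a routine omission.

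On the adjoint: your adjoint-functor-theorem argument (source and target presentable, $\on{pf}$ limit-preserving and accessible via the limit- and sifted-colimit-detecting forgetful functor of \Cref{rmk:calg-cat}) is sound in principle, but it presupposes the unconstructed $\Psi$. Note that once the square is built the paper's way, the left adjoint is immediate without any pointwise checks: restrictions have left adjoints (left Kan extension), the right Kan extension $w_*$ has restriction $w^*$ as left adjoint, and the first step $\chi_p$ preserves limits and sifted colimits, hence is a right adjoint between presentable \icats. So filling the coherence gap as above also renders your limit/accessibility verifications unnecessary.
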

\begin{proof}
Restricting along $p:\Delta^1\to S$ and identifying commutative algebra objects with commutative monoids~\cite[Proposition~2.4.2.5]{Lurie_higher-topos} defines a functor
\[
\chi_p:\fun{S\op}{\calg{\iCAT}}\to\fun{\Delta^1\times\fin}{\iCAT}.
\]
Next consider the morphism $v:\Delta^1\times\Lambda^2_0\to\Delta^1\times\fin$ depicted on the left:
\begin{align}
  \label{eq:pf-functoriality-v}
\begin{tikzcd}[ampersand replacement=\&]
(0,\langle 1\rangle)
\ar[r, "e\times\id"]
\&
(1,\langle 1\rangle)
\\
(0,\langle 2\rangle)
\ar[r, "e\times\id"]
\ar[u, "\rho_2"]
\ar[d, "m" left]
\&
(1,\langle 2\rangle)
\ar[u, "\rho_2"]
\ar[d, "m" left]
\\
(0,\langle 1\rangle)
\ar[r, "e\times\id"]
\&
(1,\langle 1\rangle)
\end{tikzcd}
  &&
\begin{tikzcd}[ampersand replacement=\&]
C(s)
\ar[r, "p^*"]
\&
C(t)
\\
C(s)\times C(s)
\ar[r, "{p^*\times p^*}"]
\ar[u, "\pi_2"]
\ar[d, "\otimes" left]
\&
C(t)\times C(t)
\ar[u, "\pi_2"]
\ar[d, "\otimes" left]
\\
C(s)
\ar[r, "p^*"]
\&
C(t)
\end{tikzcd}     
\end{align}
Here $e:0\to 1$ is the non-degenerate edge in $\Delta^1$, $m:\langle 2\rangle\to\langle 1\rangle$ is the active morphism in $\fin$, and $\rho_2:\langle 2\rangle\to \langle 1\rangle$ is the inert morphism with $\rho_2(2)=1$.
We restrict further along $v$:
\[
v^*:\fun{\Delta^1\times\fin}{\iCAT}\to\fun{\Delta^1\times\Lambda^2_0}{\iCAT}.
\]
To help the reader, the diagram $v^*\circ\chi_p(C)$ is depicted on the right hand side in~(\ref{eq:pf-functoriality-v}).

We now consider the canonical inclusion $w:\Delta^1\times\Lambda^2_0\hookrightarrow W$ where $W$ is the poset with one additional element and relations depicted on the left:
\begin{align*}
\begin{tikzcd}[ampersand replacement=\&]
\circ
\ar[rr]
\&\&
\circ
\\
\&
\bullet
\ar[lu]
\ar[rd]
\\
\bullet
\ar[ru]
\ar[dd]
\&\&
\circ
\ar[dd]
\ar[uu]
\\
\\
\bullet
\ar[rr]
\&\&
\bullet
\end{tikzcd}
  &&
\begin{tikzcd}[ampersand replacement=\&]
C(s)
\ar[rr, "p^*"]
\&\&
C(t)
\\
\&
\color{distinguished}C(t)\times C(s)
\ar[lu, "\pi_2"]
\ar[rd, "{\color{distinguished}\id\times p^*}"]
\\
\color{distinguished}C(s)\times C(s)
\ar[dd, "\color{distinguished}\otimes" left]
\ar[ru, "{\color{distinguished}p^*\times\id}"]
\&
\&
C(t)\times C(t)
\ar[uu, "\pi_2"]
\ar[dd, "\color{distinguished}\otimes" left]
\\
\\
\color{distinguished}C(s)
\ar[rr, "\color{distinguished}p^*"]
\&
\&
\color{distinguished}C(t)
\end{tikzcd}
\end{align*}
The result $w_*\circ v^*\circ\chi_p(C)$ of right Kan extending along $w$,
\[
w_*:\fun{\Delta^1\times\Lambda^2_0}{\iCAT}\to\fun{W}{\iCAT}
\]
is depicted on the right.
Finally, there is an inclusion $x:\Delta^1\times\Delta^1\to W$ indicated by the `$\bullet$', restriction along which produces the required square:
\[
x^*:\fun{W}{\iCAT}\to\fun{\Delta^1\times\Delta^1}{\iCAT}=\Sq
\]

For the second statement, notice that $\chi_p$ preserves limits and sifted colimits, by~\cite[Corollary~3.2.2.5, Proposition~3.2.3.1]{Lurie_higher-algebra}, and therefore admits a left adjoint.
Restrictions and right Kan extensions have left adjoints given by left Kan extensions and restrictions, respectively.
Hence $\on{pf}_p$ admits a left adjoint.
\end{proof}

\begin{proof}[Proof of \Cref{sta:pb-presentable}]
By \Cref{fun-pres}, the \icat $\fun{S\op}{\calg{\iCAT}}$ is pre\-sent\-able.
The same argument implies that $\Sq$ is also presentable.
By \cite[Corollary~4.7.4.18]{Lurie_higher-algebra}, $\SqLAd$ is presentable,
and the inclusion $\SqLAd \hookrightarrow \Sq$ admits a left adjoint.
As very large presentable \icats and right adjoint functors between them are closed under (large) limits in $\ICAT$, it therefore suffices to prove that the right vertical arrow $\on{bc}\times\on{pf}$ in~\eqref{eq:pb-adjointable-cartesian} is a right adjoint.
In other words, it remains to prove that $\on{bc}_q$ and $\on{pf}_p$ are right adjoints for each $q\in\sqCP$ and $p\in P_1$.
This is \Cref{sta:pf-functor} for $\on{pf}_p$, and is easy for $\on{bc}_q$ as it can be written as the composite
\[
\fun{S\op}{\calg{\iCAT}}\to\fun{S\op}{\iCAT}\to\fun{\Delta^1\times\Delta^1}{\iCAT},
\]
where the first functor forgets the symmetric monoidal structure, and the second is restriction along the functor $q:\Delta^1\times\Delta^1\to S$.
Both these functors clearly admit left adjoints (\Cref{rmk:calg-cat}), and the claim for $\on{bc}_q$ follows.
\end{proof}

\begin{rmk}
\label{generalized-projection-formula}%
If $C: S\op \to \calg{\iCAT}$ satisfies both base change and the projection formula (that is, if $C$ is a pullback formalism) then $C$ necessarily satisfies an apparently stronger condition:
For any Cartesian square
\[
\begin{tikzcd}
\prod_is_i'
&
\prod_is_i
\ar[l, "(p_i)" above]
\\
s'
\ar[u, "(a_i)"]
&
s
\ar[l, "p" above]
\ar[u, "(b_i)" right]
\end{tikzcd}
\]
in $S$ with $p_1,\ldots, p_n$ in $P$, the essentially commutative square
\[
\begin{tikzcd}
\prod_iC(s_i')
\ar[r, "(p_i^*)"]
\ar[d, "a_1^*(-)\otimes\cdots \otimes a_n^*(-)", swap]
&
\prod_iC(s_i)
\ar[d, "b_1^*(-)\otimes\cdots \otimes b_n^*(-)"]
\\
C(s')
\ar[r, "p^*"]
&
C(s)
\end{tikzcd}
\]
is left adjointable.
Indeed, the case $n=1$ is precisely the base change property.
The case $n=2$ follows from the base change property together with the projection formula.
Inductively, $C$ satisfies this property for every $n\geq 1$.
We will say that $C$ satisfies the \emph{generalized projection formula} (with respect to $P$).
\end{rmk}

\section{Geometric pullback formalism}
\label{sec:geom-pullb-form}

By \Cref{sta:pb-presentable}, the \icat of pullback formalisms $\pb$ has an initial object.
In this section, we study this initial pullback formalism, which will be fundamental for the remainder of the article.
In \Cref{sec:pb-gm-construction}, we describe this pullback formalism in detail, and in \Cref{sec:geometric-action} we explain how it acts on every other pullback formalism.
This will be used to prove in \Cref{sec:init-pullb-form} that it is the initial object in $\pb$.

\subsection{Construction}
\label{sec:pb-gm-construction}

\begin{notn}
\label{notn:bigO}%
We denote by $\cO_S=\fun{\Delta^1}{S}$ the category of arrows in $S$, and by $\cO^P_S\subseteq\cO_S$ the full subcategory spanned by arrows in $P$.
We endow both of these with the Cartesian symmetric monoidal structure so that evaluation at $1\in\Delta^1$ induces a symmetric monoidal functor
\[
\on{ev}_1:\cO_S^{P}\to S.
\]
(Here, we use the assumptions on $P$ in \Cref{notn:partially-adjointable}.)
Finally, for $s\in S$, we denote by $P_s$ the fiber of $\on{ev}_1$ over $s$ with the induced (Cartesian) symmetric monoidal structure.
\end{notn}

\begin{exa}
\label{exa:Sm_X}
In our main \Cref{exa:SP-schemes} and for a finite type $B$-scheme~$X$, the category $P_X$ is the category of smooth $X$-schemes.
\end{exa}

\begin{lem}
\label{sta:O-mon-cart-fib}%
The functor $\on{ev}_1:(\cO_S^{P})^\times\to S^\times$ is a `symmetric monoidal Cartesian fibration', that is, the underlying functor $\on{ev}_1:\cO_S^P\to S$ is a Cartesian fibration, and the Cartesian edges are stable under products in $\cO_S^P$.
\end{lem}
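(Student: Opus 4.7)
The plan is to verify the two conditions directly, exploiting the strictly 1-categorical nature of $S$; since $S$ is an ordinary category, a Cartesian fibration over $S$ in the $\infty$-categorical sense is the same as a Grothendieck fibration of 1-categories. First I would observe that $\on{ev}_1:\cO_S\to S$ is a Cartesian fibration by the standard argument: given $p:s'\to s$ in $S$ and $f:t\to s$ in $S$, a Cartesian lift of $f$ at $p$ is the pullback square exhibiting $s'\times_st\to t$, which exists by finite completeness of $S$. Because $P$ is stable under pullback along morphisms of $S$, this pullback lies in $P$ whenever $p$ does, so the lift already lives inside $\cO_S^P$. Since $\cO_S^P\subseteq\cO_S$ is full, a Cartesian edge of $\cO_S$ with both endpoints in $\cO_S^P$ remains Cartesian in $\cO_S^P$; this gives that $\on{ev}_1:\cO_S^P\to S$ is a Cartesian fibration, and at the same time identifies its Cartesian edges with (nerves of) pullback squares in $S$.

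For stability under products I would first confirm that $\cO_S^P$ admits finite products. Given $p_i:s_i'\to s_i$ in $P$, I would factor their product in $\cO_S$ as
\[
s_1'\times s_2'\xrightarrow{\id\times p_2}s_1'\times s_2\xrightarrow{p_1\times\id}s_1\times s_2,
\]
where each factor is a pullback of some $p_i$ along a projection in $S$. By pullback-stability each factor lies in $P$, and since $P$ is a subcategory the composite lies in $P$ as well. Hence $p_1\times p_2\in\cO_S^P$, and the Cartesian symmetric monoidal structure on $\cO_S^P$ is well-defined and compatible with the one on $S$.

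Finally, suppose $\alpha_1,\alpha_2$ are Cartesian morphisms in $\cO_S^P$, which by the first paragraph are pullback squares in $S$ with vertical edges in $P$. Their product in $\cO_S^P$ is computed componentwise, producing the square obtained by taking products of vertices, and this product square is again Cartesian in $S$ because limits commute with limits (concretely, the product of two pullback squares is a pullback square). By the characterization of Cartesian edges from the first paragraph, $\alpha_1\times\alpha_2$ is therefore Cartesian in $\cO_S^P$, which is what was needed. I do not anticipate any real obstacle: the whole argument reduces to the standard description of Cartesian lifts in the arrow category combined with the hypotheses on $P$, with the only mildly delicate point being closure of $P$ under products, which follows formally from pullback-stability and closure under composition.
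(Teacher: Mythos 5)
Your proof is correct and follows essentially the same route as the paper's: Cartesian lifts are pullback squares, which exist since $S$ is finitely complete and stay in $\cO_S^P$ by pullback-stability of $P$, and products of Cartesian (pullback) squares are again Cartesian because limits commute with limits. Your extra verification that $\cO_S^P$ is closed under binary products is the point the paper absorbs into \Cref{notn:bigO} (``Here, we use the assumptions on $P$\dots''), so it is a welcome but not genuinely different addition.
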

\begin{proof}
The underlying functor $\on{ev}_1:\cO_S^P\to S$ is a Cartesian fibration since $S$ admits pullbacks and $P$ is stable under them.
An $\on{ev}_1$-Cartesian edge in $\cO_S^P$ corresponds to a Cartesian square in $S$, and these are stable under products with any object of $\cO_S^P$.
This completes the proof.
\end{proof}

\begin{rmk}
\label{rmk:pb-gm}%
Passing to opposite categories we obtain a `symmetric monoidal coCartesian fibration'
\begin{equation}
\label{eq:op-gm-monoidal}%
\on{ev}_1:(\cO_S^P)^{\opname,\amalg}\to S^{\opname,\amalg},
\end{equation}
that is, a symmetric monoidal functor whose underlying functor is a coCartesian fibration, and such that the coCartesian edges are stable under coproducts in $(\cO^P_S)\op$.
By \Cref{tensor-un-straightening,sta:D-monoidal-explicit}, this corresponds to a functor $\pbgmop:S\op\to\calg{\iCAT}$.
We will be more interested in its opposite $\pbgm:S\op\to\calg{\iCAT}$ (\Cref{exa:opposite-pb}), whose value at $s\in S$ is
\[
\pbgm(s)=P_s,
\]
the full subcategory of $S_{/s}$ spanned by arrows $s'\to s$ in $P$, endowed with the Cartesian symmetric monoidal structure (that is, the fiber product over $s$ in $S$).
Given a morphism $f:t\to s$ in $S$, the induced symmetric monoidal functor $f^*:\pbgm(s)\to\pbgm(t)$ is given by pullback along $f$:
\begin{equation}
\label{eq:pb-gm-functoriality}%
P_s\to P_t,\quad (s'\to s)\mapsto (s'\times_st\to t)
\end{equation}
\end{rmk}

\begin{defn}
\label{defn:pb-gm}%
The functor $\pbgm:S\op\to\calg{\iCat}$ is called the \emph{geometric pullback formalism}.
This terminology is justified by the following result.
\end{defn}

\begin{prop}
\label{sta:pb-gm-adjointable}%
The functor $\pbgm:S\op\to\calg{\iCAT}$ is a pullback formalism.
\end{prop}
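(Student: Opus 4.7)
The plan is to verify the three axioms of \Cref{defn:pb-adjointable} using the pointwise description of $\pbgm$ supplied by \Cref{rmk:pb-gm}. Because $S$ is an ordinary category and $\pbgm(s) = P_s$ is the full subcategory of $S_{/s}$ spanned by $P$-morphisms (with the Cartesian symmetric monoidal structure), each axiom reduces to a routine $1$-categorical check on slice categories. The real \icategorical content is only that the candidate left adjoints and exchange transformations assemble into symmetric monoidal data coherently in $s$; this has already been packaged by \Cref{sta:O-mon-cart-fib}, which exhibits $\pbgm$ as (the opposite of) the straightening of the symmetric monoidal Cartesian fibration $\on{ev}_1 \colon (\cO_S^P)^\times \to S^\times$.

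For axiom~\ref{item:pb-adjointable-E}, the left adjoint $p_\sharp \colon P_{s'} \to P_s$ to $p^*$ is post-composition with $p$; it lands in $P_s$ because $P$ is a subcategory (hence closed under composition), and it is left adjoint to $p^*$ by the standard slice adjunction $S_{/s'} \rightleftarrows S_{/s}$ restricted to the full subcategories of $P$-morphisms. For axiom~\ref{item:pb-adjointable-bc}, given a Cartesian square with $p,p' \in P$, both $f^* p_\sharp$ and $p'_\sharp (f')^*$ send $(u \to s') \in P_{s'}$ to the pullback of $u$ along $f\colon t \to s$, and the exchange transformation is the canonical equivalence furnished by the pullback pasting law, which applies precisely because the given square is Cartesian. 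For axiom~\ref{item:pb-adjointable-pf}, since the tensor products on $P_s$ and $P_{s'}$ are Cartesian (i.e.\ fiber products over $s$ and $s'$ respectively), for $(q\colon u \to s) \in P_s$ and $(r\colon v \to s') \in P_{s'}$ both sides of the projection formula unwind to the same fiber product:
\[
p_\sharp\bigl(p^*(q) \otimes r\bigr) \simeq (u \times_s s') \times_{s'} v \simeq u \times_s v \simeq q \otimes p_\sharp(r),
\]
and the exchange transformation is the canonical comparison between these presentations.

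The main obstacle is purely \icategorical bookkeeping: promoting the three $1$-categorical verifications above to coherent statements about symmetric monoidal $\infty$-functors and the naturality of the exchange transformations in $s$. This is where one must lean on \Cref{sta:O-mon-cart-fib}, \Cref{rmk:pb-gm}, and the tensor un-straightening machinery referenced in \Cref{pb-formalism-encoding-informal} to translate the pointwise identities of slice categories into the adjointability data demanded by \Cref{defn:pb-adjointable}.
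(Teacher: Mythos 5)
Your proposal is correct and follows essentially the same route as the paper: the left adjoint $p_\sharp$ is post-composition with $p$, and the base change and projection formula exchange maps are identified pointwise with canonical isomorphisms coming from pasting of Cartesian squares. Note that the final ``coherence'' worry is not really an obstacle: the functor $\pbgm$ is already constructed in \Cref{rmk:pb-gm}, and the axioms of \Cref{defn:pb-adjointable} are pointwise \emph{properties} of that functor (existence of adjoints and invertibility of canonical exchange transformations), so the $1$-categorical verifications you give are all that is required, exactly as in the paper's proof.
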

\begin{proof}
We need to verify the three conditions in \Cref{defn:pb-adjointable}.
For \ref{item:pb-adjointable-E}, we note that if $f:t\to s$ belongs to $P$, then the functor~(\ref{eq:pb-gm-functoriality}) admits a left adjoint $f_\sharp$ given by post-composition with $f$.

For \ref{item:pb-adjointable-bc}, let us be given a Cartesian square as in~(\ref{eq:sqcp}) and apply $\pbgm$:
\[
\begin{tikzcd}
P_s
\ar[r, "s'\times_s\blank" above]
\ar[d, "t\times_s\blank" left]
&
P_{s'}
\ar[d, "t'\times_{s'}\blank" right]
\\
P_t
\ar[r, "t'\times_t\blank" above]
&
P_{t'}
\end{tikzcd}
\]
By the description of $p_\sharp$ and $p'_\sharp$ just given, we recognize the associated exchange transformation evaluated at $q\in P_{s'}$ as the canonical morphism
\[
p'(t'\times_{s'}q)\to t\times_s(pq),
\]
and invertibility of this morphism expresses the fact that the composition of two Cartesian squares is Cartesian.
This proves that the geometric pullback formalism satisfies the base change property.

We turn to \ref{item:pb-adjointable-pf} in \Cref{defn:pb-adjointable}.
Let $p:s'\to s$ be a morphism in $P$, and consider the associated square~\eqref{eq:pf-diagram}.
The associated exchange transformation evaluated at $q\in P_s$ and $q'\in P_{s'}$ is easily seen to coincide with the canonical morphism
\[
p(q'\times_{s'}(s'\times_sq))\to (pq')\times_sq
\]
which is again invertible for the same reason.
\end{proof}

\subsection{Geometric action}
\label{sec:geometric-action}

Let $C$ be a pullback formalism and fix $s\in S$.
Given any $M\in C(s)$ and $p:s'\to s$ a $P$-morphism, we may define a new object in $C(s)$:
\[
p_\sharp p^*(M)
\]
Our goal in the present subsection is to prove that this association can be promoted to a functor $\pbgm(s)\times C(s)\to C(s)$, compatible with the symmetric monoidal structures and suitably functorial in $s$.
The statement is \Cref{sta:geometric-action}, and the proof occupies the rest of the subsection.

\begin{notn}
\label{notn:geometric-action}
Throughout this subsection, we fix:
\begin{itemize}
\item
$C:S\op\to\calg{\iCAT}$, a pullback formalism;
\item
$\tilde\cC^\boxtimes\to S^{\mathrm{op},\amalg}$, the opposite $S\op$-monoidal \icat associated to $C$ (\Cref{pb-formalism-dual-encoding-informal}).
\end{itemize}
\end{notn}

\begin{prop}
\label{sta:geometric-action}%
There exists a morphism in $\fun{S\op}{\calg{\iCAT}}$,
\begin{equation}
\pbgm\times C\to C,\label{eq:gm-action}
\end{equation}
which at $s\in S$ can informally be described as the association
\[
(p,M)\mapsto p_\sharp p^*M.
\]
\end{prop}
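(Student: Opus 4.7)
The plan is to construct the desired transformation $\alpha: \pbgm \times C \to C$ by un-straightening.  Since functors $S\op \to \calg{\iCAT}$ correspond to symmetric monoidal coCartesian fibrations over $S^{\opname,\amalg}$, it suffices to exhibit a symmetric monoidal functor of coCartesian fibrations $\mathcal{G}^{\boxtimes} \times_{S^{\opname,\amalg}} \tilde\cC^{\boxtimes} \to \tilde\cC^{\boxtimes}$, where $\mathcal{G}^{\boxtimes} \to S^{\opname,\amalg}$ is the symmetric monoidal coCartesian fibration classifying $\pbgm$ (cf.\ \Cref{rmk:pb-gm}).  On objects the rule is $(s, p: s_1 \to s, M) \mapsto (s, p_\sharp p^* M)$, which is well-defined by axiom \ref{item:pb-adjointable-E} of pullback formalisms.

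For morphisms, recall from \Cref{pb-formalism-dual-encoding-informal} that a morphism $(s, p, M) \to (t, q, N)$ over $f: t \to s$ consists of a commutative square in $S$ with vertical edges $p, q \in P$ together with a morphism $\varphi: N \to f^* M$ in $C(t)$.  The induced morphism on the action values is a morphism $q_\sharp q^* N \to f^* p_\sharp p^* M$ in $C(t)$.  Factoring the given square through the Cartesian completion $t_1 \to s_1 \times_s t$ over $t$, axiom \ref{item:pb-adjointable-bc} supplies the equivalence $f^* p_\sharp p^* M \simeq \bar p_\sharp \bar p^* f^* M$, where $\bar p: s_1 \times_s t \to t$ is the pullback of $p$; combining $\varphi$ with the unit of $\bar p_\sharp \dashv \bar p^*$ and passing across the adjunction $q_\sharp \dashv q^*$ then produces the required map.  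Symmetric monoidality amounts to the identification $(p \times_s q)_\sharp (p \times_s q)^* (M \otimes N) \simeq (p_\sharp p^* M) \otimes (q_\sharp q^* N)$, which follows from the projection formula \ref{item:pb-adjointable-pf} combined with base change (compare \Cref{generalized-projection-formula}); preservation of coCartesian edges is similarly equivalent to the base change axiom.

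The main obstacle is coherently assembling this data into an actual symmetric monoidal functor of \icats, since the informal description involves several universal constructions (adjunctions, base change equivalences) that must be handled simultaneously.  One route that bypasses much of the bookkeeping is to first build a morphism $\psi: \pbgm \to C$ in $\fun{S\op}{\calg{\iCAT}}$ with $\psi(p) = p_\sharp \1{C(s_1)}$, and then define $\alpha$ as the composite $\pbgm \times C \xrightarrow{\psi \times \id} C \times C \xrightarrow{\otimes} C$.  That $\alpha(p, M) \simeq p_\sharp p^* M$ then follows from the projection formula applied to $p_\sharp(\1{s_1} \otimes p^* M) \simeq (p_\sharp \1{s_1}) \otimes M$, reducing the construction of $\alpha$ to the cleaner (but still nontrivial) construction of $\psi$.
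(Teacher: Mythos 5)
Your object- and morphism-level description agrees with the paper's informal picture, and you correctly identify the inputs: base change governs preservation of coCartesian edges, and the generalized projection formula (\Cref{generalized-projection-formula}) governs monoidality. But you then concede that the actual difficulty is assembling this data into a symmetric monoidal functor of \icats, and the route you propose to bypass it contains a genuine gap: you never construct the morphism $\psi:\pbgm\to C$ in $\fun{S\op}{\calg{\iCAT}}$ with $\psi(p)\simeq p_\sharp p^*\1{C(s)}$, and constructing $\psi$ coherently is not a simpler problem---it is the same coherence problem (functoriality in $s$ via base change, symmetric monoidality via the projection formula) in barely disguised form. Indeed, relative to the paper's logical order your reduction runs backwards: the paper obtains $\morgm[C]{\blank}:\pbgm\to C$ (your $\psi$) as a \emph{consequence} of \Cref{sta:geometric-action}, by composing the action with the unit morphism $\pt\to C$; the action is used to build $\psi$, not conversely. (The other ingredient of your reduction, that $\otimes:C\times C\to C$ is a morphism in $\fun{S\op}{\calg{\iCAT}}$, is unproblematic, so the whole weight rests on the unconstructed $\psi$.) As it stands, the proposal replaces the statement to be proved by an unproved statement of essentially equal strength.

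What is missing is the paper's device for producing the coherence for free: unstraighten over $(\cO_S^P)^{\opname,\amalg}$ and define the \emph{easy} $S\op$-monoidal functor $\Pi^*:\tilde\cT^\boxtimes\to\tilde\cS^\boxtimes$, $(p,M)\mapsto(p,p^*M)$, directly by restricting $\tilde C$ along evaluation (\Cref{cons:Pi}); no adjunction data or choices enter at this stage. The relative adjoint functor theorem~\cite[Corollary~7.3.2.7]{Lurie_higher-algebra} then assembles the fiberwise left adjoints $p_\sharp$ into a right adjoint $\Pi_\sharp$ relative to $(\cO_S^P)^{\opname,\amalg}$, and the verification that $\Pi_\sharp$ is $S^{\opname,\amalg}$-monoidal---\textsl{via} the explicit criteria of \Cref{sta:D-monoidal-morphism-explicit}---is precisely where base change (preservation of $S\op$-coCartesian edges) and the generalized projection formula are consumed (\Cref{sta:Pi-sharp}). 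The action is then simply the composite $p_2\circ\Pi_\sharp\circ\Pi^*$, which tautologically sends $(p,M)$ to $p_\sharp p^*M$. If you wish to rescue your second route, you would have to construct $\psi$ by an argument of exactly this type anyway, at which point nothing is gained over constructing the action directly.
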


\begin{rmk}
The morphism~(\ref{eq:gm-action}) does not commute with $q_\sharp$ for $P$-morphisms $q$.
In other words, it is not a morphism of pullback formalisms.
\end{rmk}

\begin{rmk}
\label{rmk:gm-action-fibration}%
Passing to opposites (\Cref{exa:opposite-pb}), such a morphism~(\ref{eq:gm-action}) would correspond to one of the form
\begin{equation}
\label{eq:gm-action-op}%
\pbgmop\times \tilde{C}\to \tilde{C},
\end{equation}
and under the equivalence of \Cref{tensor-un-straightening}, this would in turn correspond to an $S\op$-monoidal functor (\cf \Cref{rmk:pb-gm})
\begin{equation}
(\cO_S^P)^{\opname,\amalg}\times_{S^{\mathrm{op},\amalg}}\tilde\cC^\boxtimes\to\tilde\cC^\boxtimes\label{eq:gm-action-fibration}.
\end{equation}
We will from now on work in this setting of $S\op$-monoidal \icats.
\end{rmk}

\begin{rmk}
\label{cons:Pi}%
Identifying $\Delta^1$ with its opposite $(\Delta^1)\op$ we obtain an equivalence
\begin{equation}
\label{eq:fun-op}
(\cO_S^P)\op\simeq\cO^{P\op}_{S\op}.
\end{equation}
The following construction will play a fundamental role in defining the functor~(\ref{eq:gm-action-fibration}).
Consider the composite
\[
\Delta^1\times(\cO_S^P)\op\stackrel{\text{\eqref{eq:fun-op}}}{\simeq}\Delta^1\times\cO^{P\op}_{S\op}\xto{\mathrm{ev}}S\op\xto{\tilde{C}}\calg{\iCAT}
\]
of evaluation and the opposite pullback formalism $\tilde{C}$ (\Cref{exa:opposite-pb}).
It corresponds (\Cref{tensor-un-straightening}) to an $(\cO_S^P)\op$-monoidal functor:
\[
\begin{tikzcd}[column sep=small]
\tilde\cT^\boxtimes
\ar[rr, "\Pi^*"]
\ar[rd, "p_1" below]
&&
\tilde\cS^\boxtimes
\ar[ld, "p_0" below]
\\
&
(\cO^P_S)^{\mathrm{op},\amalg}
\end{tikzcd}
\]
Informally, the total \icats may be described as follows:
\begin{itemize}
\item Objects of $\tilde\cT$ are pairs $(p:s'\to s, M)$ where $p$ is a $P$-morphism and $M\in C(s)$.
A morphism $(p:s'\to s,M)$ to $(q:t'\to t,N)$ is a morphism $(f',f):q\to p$ in $\cO^P_S$ and a morphism $N\to f^*M$ in $C(t)$.
\item Objects of $\tilde\cS$ are pairs $(p:s'\to s, M')$ where $p$ is a $P$-morphism and $M'\in C(s')$.
A morphism $(p:s'\to s,M')$ to $(q:t'\to t,N')$ is a morphism $(f',f):q\to p$ in $\cO_S^P$ and a morphism $N'\to (f')^*M'$ in $C(t')$.
\end{itemize}
In both cases, the tensor product is given by the external product.
The functor $\Pi^*$ can then be thought of as mapping
\[
\tilde\cT\ni\ (p, M)\mapsto (p,p^*M)\ \in\tilde\cS.
\]
\end{rmk}

\begin{rmk}
\label{rmk:domain-action-cons}%
The composite
\[
q_1:\tilde\cT^\boxtimes\xto{p_1}(\cO^P_S)^{\opname,\amalg}\xto{\on{ev}_1}S^{\opname,\amalg}.
\]
exhibits $\tilde\cT^\boxtimes$ as the domain of the $S\op$-monoidal functor~\eqref{eq:gm-action-fibration} to be constructed.
(This follows from~\cite[Remark~3.2.5.14]{Lurie_higher-topos}.)
Under this identification, the functor $p_1$ corresponds to the canonical projection onto the first factor.
We denote by $p_2:\tilde{\cT}^\boxtimes\to\tilde{\cC}^\boxtimes$ the functor corresponding to the canonical projection onto the second factor.
We will in the sequel consider $\tilde\cT^{\boxtimes}$ as an $S\op$-monoidal \icat \textsl{via} $q_1$.
Similarly, we will consider $\tilde\cS^\boxtimes$ as an $S\op$-monoidal \icat \textsl{via} the composite $q_0:=\on{ev}_1\circ p_0$.
\end{rmk}

\begin{lem}
\label{sta:Pi-sharp}%
\begin{enumerate}[(a)]
\item
\label{sta:Pi-sharp.star}%
The functor $\Pi^*$ is $S^{\opname}$-monoidal.
\item
\label{sta:Pi-sharp.exists}%
The functor $\Pi^*$ admits a right adjoint $\Pi_\sharp$ relative to $(\cO_S^P)^{\opname,\amalg}$.  Moreover, $\Pi_\sharp$ is a map of \ioperads.
\item
\label{sta:Pi-sharp.monoidal}%
The functor $\Pi_\sharp:\tilde\cS^\boxtimes\to\tilde\cT^\boxtimes$ is $S^{\opname}$-monoidal.
\end{enumerate}
\end{lem}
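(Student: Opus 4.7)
The plan is to deduce all three parts using Lurie's machinery of relative adjoints in the monoidal context, combined with the pullback formalism axioms of \Cref{defn:pb-adjointable}. The overarching observation is that, by \Cref{cons:Pi}, $\Pi^*$ is an $(\cO_S^P)^{\opname,\amalg}$-monoidal functor, while the passage to $S^{\opname,\amalg}$-monoidality is mediated by the symmetric monoidal coCartesian fibration $\on{ev}_1:(\cO_S^P)^{\opname,\amalg}\to S^{\opname,\amalg}$ obtained from \Cref{sta:O-mon-cart-fib} by passing to opposites.

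For part (a), since $\Pi^*$ preserves coCartesian edges over $(\cO_S^P)^{\opname,\amalg}$, and $\on{ev}_1$ takes coCartesian edges to coCartesian edges, their composite shows that $\Pi^*$ preserves coCartesian edges over $S^{\opname,\amalg}$. This exhibits $\Pi^*$ as an $S^{\opname,\amalg}$-monoidal functor between $\tilde\cT^\boxtimes$ and $\tilde\cS^\boxtimes$ considered as $S^{\opname,\amalg}$-monoidal via $q_1$ and $q_0$ respectively, as in \Cref{rmk:domain-action-cons}.

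For part (b), I would invoke the relative adjoint criterion from \cite[\S\,7.3.2]{Lurie_higher-algebra} over the base $(\cO_S^P)^{\opname,\amalg}$, which requires two ingredients. First, each fiber of $\Pi^*$ over a tuple $(p_1,\ldots,p_n)$ of $P$-morphisms identifies, up to opposites, with $\prod_i p_i^*$, and admits a right adjoint $\prod_i (p_i)_\sharp$ by axiom \ref{item:pb-adjointable-E} of \Cref{defn:pb-adjointable}. Second, for each morphism in $(\cO_S^P)^{\opname,\amalg}$ the associated square of fiber functors must be Beck-Chevalley. Decomposing operadic morphisms into inert and active parts, the inert case reduces to the componentwise definition of the adjunctions, while the active case is provided by $P$-base change (axiom \ref{item:pb-adjointable-bc}) combined with the generalized projection formula of \Cref{generalized-projection-formula}. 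The resulting relative right adjoint $\Pi_\sharp$ is automatically a map of \ioperads, since the right adjoint of a symmetric monoidal functor always carries a canonical lax symmetric monoidal structure, compatible with the operadic maps.

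For part (c), it suffices to show that $\Pi_\sharp$ is $(\cO_S^P)^{\opname,\amalg}$-monoidal, since $S^{\opname,\amalg}$-monoidality then follows by the same composition argument as in part (a). Upgrading $\Pi_\sharp$ from lax to strict $(\cO_S^P)^{\opname,\amalg}$-monoidal amounts to verifying that its canonical lax structure maps are equivalences, which is precisely the content of \Cref{generalized-projection-formula}. The main obstacle is the verification of Beck-Chevalley for active multi-ary morphisms of $(\cO_S^P)^{\opname,\amalg}$ in part (b): one must unpack the combinatorics of operadic morphisms, identify the resulting fiber functors, and match the resulting squares with the base change and projection formula axioms. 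Once this bookkeeping is handled, parts (a) and (c) follow by the general principle that monoidality over a base operad, together with a monoidal coCartesian fibration of operads, descends to monoidality over the target operad.
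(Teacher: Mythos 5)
Your part (a) is fine and is essentially the paper's argument (the precise version uses \Cref{coCart-fibs-morphisms} and~\cite[Proposition~2.4.1.3.(3)]{Lurie_higher-topos} in both directions, but the idea matches). The gap is in parts (b) and (c), and it concerns which Beck--Chevalley squares are actually needed and actually available. For (b), the existence of a right adjoint relative to the coCartesian fibration $(\cO_S^P)^{\opname,\amalg}$ does \emph{not} require any adjointability condition over the base: \cite[Corollary~7.3.2.7]{Lurie_higher-algebra} needs only that each fiber functor of $\Pi^*$ (namely $p^*:C(s)\op\to C(s')\op$, with right adjoint $p_\sharp$ by \Cref{defn:pb-adjointable}\ref{item:pb-adjointable-E}) admits a right adjoint, and it then automatically produces $\Pi_\sharp$ as a map of \ioperads; this is exactly what the paper does. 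Your proposed ``second ingredient''---Beck--Chevalley for the square attached to \emph{every} morphism of $(\cO_S^P)^{\opname,\amalg}$---is not only unnecessary, it is false: a unary morphism of $(\cO_S^P)^{\opname,\amalg}$ is an arbitrary commutative square in $S$ between $P$-morphisms, and the exchange transformation $q_\sharp(f')^*\to f^*p_\sharp$ is an equivalence only for \emph{Cartesian} squares. (Already for $\pbgm$: take $f=\id_s$, $p:s'\to s$, $q=pg$ for a nontrivial $g:s''\to s'$; then $q_\sharp g^*\not\simeq p_\sharp$.) The axioms you invoke---$P$-base change and the generalized projection formula of \Cref{generalized-projection-formula}---only cover Cartesian squares and product-projection squares, so the verification you sketch cannot go through for general morphisms of the base operad.

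This defect propagates into (c): the reduction ``it suffices to show $\Pi_\sharp$ is $(\cO_S^P)^{\opname,\amalg}$-monoidal'' aims at a statement that is false in general, since $\Pi_\sharp$ fails to preserve coCartesian edges lying over non-Cartesian squares in $\cO_S^P$; it is only lax monoidal (a map of \ioperads) over $(\cO_S^P)^{\opname,\amalg}$. The lemma asserts the weaker, correct statement of $S^{\opname,\amalg}$-monoidality, and this holds for a more restrictive reason which your argument never isolates: by \Cref{sta:D-monoidal-morphism-explicit} one must check (i) that the underlying functor $\Pi_\sharp$ preserves $S\op$-coCartesian edges of $\tilde\cS$---but these lie precisely over \emph{Cartesian} squares with an equivalence in the fiber, so $P$-base change (\Cref{defn:pb-adjointable}\ref{item:pb-adjointable-bc}) is exactly what is needed and suffices; and (ii) that the lax structure maps of $\Pi_\sharp$ (unit and binary multiplication) are equivalences, which is a direct computation on units together with the generalized projection formula. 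So the correct route is to verify $S^{\opname,\amalg}$-monoidality directly, not to deduce it from monoidality over the larger base $(\cO_S^P)^{\opname,\amalg}$.
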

\begin{proof}
The first statement~\ref{sta:Pi-sharp.star} is formal.
Namely, let $F$ be a $q_1$-coCartesian edge in $\tilde\cT^\boxtimes$.
As $p_1$ is a coCartesian fibration between coCartesian fibrations, $p_1(F)$ is $\on{ev}_1$-coCartesian (\Cref{coCart-fibs-morphisms}).
It then follows from~\cite[Proposition~2.4.1.3.(3)]{Lurie_higher-topos} that $F$ is also $p_1$-coCartesian.
By construction, $\Pi^*(F)$ is $p_0$-coCartesian.
But $p_0(\Pi^*(F))=p_1(F)$ is $\on{ev}_1$-coCartesian as remarked earlier.
Hence the other direction of~\cite[Proposition~2.4.1.3.(3)]{Lurie_higher-topos} allows us to conclude that $\Pi^*(F)$ is $q_0$-coCartesian.

We now turn to~\ref{sta:Pi-sharp.exists}.
Fix a $P$-morphism $p:s'\to s$.
The fiber of $\Pi^*$ over $p$ may be identified with the functor
\[
C(s)\op\xto{p^*}C(s')\op
\]
which admits a right adjoint $p_\sharp$ by our assumption that $C$ be $P$-adjointable.
The statement now follows from~\cite[Corollary~7.3.2.7]{Lurie_higher-algebra}.

Consider now~\ref{sta:Pi-sharp.monoidal}.
By \Cref{sta:D-monoidal-morphism-explicit}, we need to prove two things:
\begin{enumerate}[(1)]
\item
\label{it:Pi-sharp.monoidal.explicit1}%
$\Pi_\sharp:\tilde{\cS}^\boxtimes\to\tilde{\cT}^\boxtimes$ is symmetric monoidal.
\item
\label{it:Pi-sharp.monoidal.explicit2}%
The underlying functor $\Pi_\sharp:\tilde{\mathcal{S}}\to\tilde{\mathcal{T}}$ preserves $S\op$-coCartesian edges.
\end{enumerate}
Let us start with the latter.
Recall (\Cref{cons:Pi}) that a morphism from~$(p:s'\to s,M')$ to~$(q:t'\to t,N')$ in $\tilde{\mathcal{S}}$ is a morphism $(f',f):q\to p$ in $\cO_S^P$ together with a morphism $N'\to (f')^*M'$ in $C(t')$.
This is $S\op$-coCartesian if and only if the square
\begin{equation}
\label{eq:bc-in-action}%
\begin{tikzcd}
s'
\ar[r, "p"]
&
s
\\
t'
\ar[r, "q"]
\ar[u, "f'"]
&
t
\ar[u, "f" right]
\end{tikzcd}
\end{equation}
is Cartesian in $S$, and if the morphism $N'\isoto (f')^*M'$ is an equivalence.
The image under $\Pi_\sharp$ is $(f',f):q\to p$ together with the morphism $q_\sharp N'\to f^*p_\sharp M'$ in $C(t)$.
It follows that the image is  $S\op$-coCartesian if and only if the morphism
\[
q_\sharp(f')^*M'\simeq q_\sharp N'\to f^*p_\sharp M'
\]
is an equivalence in $C(t)$.
But this holds by our assumption that $C$ satisfies base change.

We now turn to~\ref{it:Pi-sharp.monoidal.explicit1}.
Since $\Pi_\sharp$ is a map of \ioperads, by~\ref{sta:Pi-sharp.exists}, it suffices to show that the canonical morphisms
\begin{align}
  \label{eq:S-monoidal.1}
  \1{\tilde{\mathcal{T}}}&\to\Pi_\sharp(\1{\tilde{\mathcal{S}}}),\\
  \label{eq:S-monoidal.pf}
  \Pi_{\sharp}(p,M')\boxtimes_{\tilde{\mathcal{T}}} \Pi_\sharp(q,N')&\to \Pi_{\sharp}((p,M')\boxtimes_{\tilde{\mathcal{S}}}(q,N'))
\end{align}
are equivalences, for any $(p:s'\to s, M')$ and $(q:t'\to t, N')$, and where $\1{(-)}$ denotes a monoidal unit.
But a monoidal unit in $\tilde{\mathcal{S}}$ is given by the object $(\id_{\1{S}}:\1{S}\to \1{S}, \1{C(\1{S})})$, where $\1{S}$ denotes the final object of $S$.
And $\Pi_\sharp(\id_{\1{S}},\1{C(\1{S})})=(\id_{\1{S}},\1{C(\1{S})})$ hence~(\ref{eq:S-monoidal.1}) is an equivalence.
As $\Pi_\sharp$ is a functor over $(\cO_S^P)\op$, the morphism~(\ref{eq:S-monoidal.pf}) identifies with the identity on $p\times q$ in $\cO_S^P$ together with
\[
(p\times q)_\sharp(M'\boxtimes_{C}N')\to p_\sharp M'\boxtimes_Cq_\sharp N'
\]
in $C(s\times t)$.
But this is a special case of the generalized projection formula of~\Cref{generalized-projection-formula},
and we conclude that it is an equivalence.
\end{proof}

\begin{proof}[Proof of \Cref{sta:geometric-action}]
We complete the argument initiated in \Cref{rmk:gm-action-fibration}.
Indeed, the composition of $\Pi_\sharp\Pi^*$ with the canonical projection (\Cref{rmk:domain-action-cons})
\[
\tilde{\mathcal{T}}^\boxtimes\xto{\Pi_\sharp\Pi^*}\tilde{\mathcal{T}}^\boxtimes\xto{p_2}\tilde{\mathcal{C}}^\boxtimes
\]
is $S\op$-monoidal, by \Cref{sta:Pi-sharp}.
Hence it corresponds to a morphism
\[
\pbgmop\times \tilde{C}\to \tilde{C}
\]
in $\fun{S\op}{\calg{\iCAT}}$, and by construction, it sends an object $(p:s'\to s,M\in C(s))$ over $s\in S$ to $p_\sharp p^*M$.
Passing to opposite pullback formalisms yields the claim.
\end{proof}

\begin{rmk}
\label{rmk:adjointable-reinterpretation}%
The construction of $\Pi^*$ did not require $C$ to be a pullback formalism.
Thus the proof of \Cref{sta:Pi-sharp} shows that the following are equivalent:
\begin{enumerate}[(i)]
\item $C:S\op\to\calg{\iCAT}$ is a pullback formalism.
\item $\Pi^*:\tilde{\cT}^\boxtimes\to\tilde{\cS}^\boxtimes$ admits a right adjoint relative to $(\cO^P_S)^{\opname,\amalg}$ which is in addition $S^{\opname}$-monoidal.
\end{enumerate}
\end{rmk}

\subsection{Initial pullback formalism}
\label{sec:init-pullb-form}
In this section, we show that the geometric pullback formalism $\pbgm$ constructed in \Cref{sec:pb-gm-construction} is an initial object of $\pb$.
We keep the notation and assumptions of \Cref{notn:geometric-action}.

\begin{rmk}
$C$ being an object of $\fun{S\op}{\calg{\iCAT}}$, there exists an essentially unique morphism $*\to C$ from the initial partial pullback formalism.
We will denote this morphism by $\1{C}$ as it is given, at $s\in S$, by a monoidal unit $*\mapsto \1{C(s)}$ of $C(s)$.
If we compose this morphism with the geometric action of \Cref{sta:geometric-action} we obtain a morphism
\[
\morgm[C]{\blank}:\pbgm\xto{\id_{\pbgm}\times\1{C}}\pbgm\times C\xto{\textup{(\ref{eq:gm-action})}}C
\]
in~$\fun{S\op}{\calg{\iCAT}}$.
Note that it takes an object $p:s'\to s$ in $\pbgm(s)$ to the object
\[
\morgm[C]{p}=p_\sharp p^*\1{C(s)}
\]
in $C(s)$.
\end{rmk}

\begin{prop}
\label{sta:geometric-morphism}%
The morphism $\morgm[C]{\blank}:\pbgm\to C$ belongs to $\pb$.
\end{prop}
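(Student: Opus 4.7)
The condition we must verify is that for every $P$-morphism $q : t \to s$, the Beck--Chevalley transformation
\[
q_\sharp \circ \morgm[C]{\blank}_t \longrightarrow \morgm[C]{\blank}_s \circ q_\sharp
\]
is an equivalence of functors $\pbgm(t) \to C(s)$. Since this is a natural transformation between functors of \icats, it suffices to verify it is an equivalence after evaluation on every object $p : r \to t$ of $\pbgm(t) = P_t$.

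The plan is then the following pointwise computation. By construction of $\morgm[C]{\blank}$ from \Cref{sta:geometric-action} applied to the unit $\1_C$, the source of the exchange evaluated at $p$ is $q_\sharp\bigl(p_\sharp p^*\1_{C(t)}\bigr)$, while the target (using that $q_\sharp$ on $\pbgm(t) = P_t$ is post-composition with $q$) is $(qp)_\sharp(qp)^*\1_{C(s)}$. These two objects are canonically equivalent via the chain
\[
q_\sharp p_\sharp p^*\1_{C(t)}\ \simeq\ (qp)_\sharp p^*\1_{C(t)}\ \simeq\ (qp)_\sharp p^* q^*\1_{C(s)}\ \simeq\ (qp)_\sharp(qp)^*\1_{C(s)},
\]
using (i) composition of left adjoints, (ii) the fact that $q^*$ is symmetric monoidal and hence preserves the monoidal unit, and (iii) the functoriality $(qp)^* \simeq p^* q^*$. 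Equivalently, since $p^*$ is monoidal, one may rewrite both sides in the much more transparent form $q_\sharp p_\sharp \1_{C(r)} \simeq (qp)_\sharp \1_{C(r)}$, which is manifestly an equivalence.

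The remaining step, and the principal obstacle, is to identify the canonical exchange natural transformation with the chain displayed above. This is a bookkeeping exercise: the naturality square of $\morgm[C]{\blank}$ at an arbitrary morphism $q: t \to s$ (not yet assumed in $P$) evaluated at $p': r' \to s$ in $P_s$ reduces, via base change in $C$ applied to $p'_\sharp$ and unit preservation for both $q^*$ and the pullback $q_{p'}^*$ of $q$ along $p'$, to the canonical equivalence $q^*\bigl(p'_\sharp(p')^*\1_{C(s)}\bigr)\simeq \tilde p_\sharp \tilde p^* \1_{C(t)}$ where $\tilde p$ denotes the pullback of $p'$ along $q$. When $q \in P$, passing to mates under the adjunctions $q^* \dashv q_\sharp$ and $\tilde p^* \dashv \tilde p_\sharp$ converts this naturality witness into the exchange map, which one then reads off as the composition of left-adjoint-composition equivalences above. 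The underlying structural input is that the $S\op$-monoidal functor $\Pi_\sharp$ of \Cref{sta:Pi-sharp} through which $\morgm[C]{\blank}$ factors preserves $S\op$-coCartesian edges precisely because $C$ satisfies base change, so no new input beyond the pullback formalism axioms for $C$ is required.
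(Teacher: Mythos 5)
Your argument is correct and follows essentially the same route as the paper's proof: reduce to a pointwise check on $p\in P_t$, identify the exchange transformation there with the canonical comparison between $q_\sharp p_\sharp p^*\1{C(t)}$ and $(qp)_\sharp(qp)^*\1{C(s)}$, and observe this is an equivalence because $q^*$ is symmetric monoidal (so it preserves the unit) together with the compatibility of composed pullbacks and their left adjoints. (One cosmetic slip: the adjunctions run $q_\sharp\dashv q^*$ and $\tilde p_\sharp\dashv \tilde p^*$, not the other way around.)
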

\begin{proof}
Let $q:s\to t$ be a $P$-morphism.
Unwinding the definitions, we need to show that the canonical morphism
\[
q_\sharp p_\sharp p^*\1{C(s)}\to (qp)_\sharp (qp)^*\1{C(t)}
\]
is an equivalence.
But this follows from $q^*$ being a symmetric monoidal functor.
\end{proof}

\begin{thrm}
\label{sta:pbgm-initial}%
The geometric pullback formalism $\pbgm$ is an initial object of $\pb$.
\end{thrm}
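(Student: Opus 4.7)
The approach is to leverage the presentability of $\pb$ together with the existence statement of \Cref{sta:geometric-morphism}. By \Cref{sta:pb-presentable}, the \icat $\pb$ admits an initial object, which I denote $\pbinit$. The universal property provides an essentially unique morphism $\alpha : \pbinit \to \pbgm$ in $\pb$, while applying \Cref{sta:geometric-morphism} to $C = \pbinit$ yields a morphism $\beta : \pbgm \to \pbinit$. The composite $\beta \circ \alpha$ is homotopic to $\id_{\pbinit}$ automatically, since $\map[\pb]{\pbinit}{\pbinit}$ is contractible. Once the reverse composite $\alpha \circ \beta : \pbgm \to \pbgm$ is shown to be homotopic to $\id_{\pbgm}$, the morphisms $\alpha$ and $\beta$ become mutually inverse equivalences in $\pb$, and initiality of $\pbgm$ follows.

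The problem therefore reduces to the following rigidity statement: every endomorphism $\gamma : \pbgm \to \pbgm$ in $\pb$ is homotopic to the identity. Two features of $\pbgm$ make this tractable. First, each value $\pbgm(s) = P_s$ is an ordinary $1$-category whose terminal object $\id_s$ coincides with the symmetric monoidal unit $\1{P_s}$. Second, every object $p : s' \to s$ of $P_s$ can be reconstructed as $p_{\sharp}\1{P_{s'}}$. Since $\gamma_s$ is symmetric monoidal and, as a morphism of pullback formalisms, $\gamma$ commutes with $p_{\sharp}$ for every $p \in P$, we obtain
\[
\gamma_s(p) \simeq \gamma_s\bigl(p_{\sharp}\1{P_{s'}}\bigr) \simeq p_{\sharp}\gamma_{s'}\bigl(\1{P_{s'}}\bigr) \simeq p_{\sharp}\1{P_{s'}} \simeq p.
\]
A parallel argument applied to morphisms---every morphism of $P_s$ is $q_{\sharp}$ of a canonical arrow to a terminal object in some $P_t$---shows that $\gamma$ coincides with the identity at the level of $1$-morphisms as well.

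The remaining task, and the main obstacle, is to upgrade these pointwise identifications into a $1$-simplex of $\map[\pb]{\pbgm}{\pbgm}$ joining $\gamma$ to $\id_{\pbgm}$. I would work at the level of the $S\op$-monoidal coCartesian fibration $(\cO^P_S)^{\opname,\amalg}\to S^{\opname,\amalg}$ of \Cref{rmk:pb-gm}, constructing the homotopy as an $S\op$-monoidal functor on $(\cO^P_S)^{\opname,\amalg}\times\Delta^1$ over $S^{\opname,\amalg}$, built from the tautological arrows that the overcategory structure of each $P_s$ supplies. Checking that the resulting homotopy actually belongs to $\pb$, i.e., commutes with $p_\sharp$, amounts to a final application of base change. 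The bulk of the technical work will sit in this step: encoding the explicit homotopy coherently at the level of $S\op$-monoidal \icats and verifying the adjointability conditions, rather than in the conceptual skeleton of the argument.
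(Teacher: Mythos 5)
Your skeleton is exactly the paper's: presentability of $\pb$ (\Cref{sta:pb-presentable}) supplies $\pbinit$, \Cref{sta:geometric-morphism} supplies $\beta:\pbgm\to\pbinit$, one composite is automatically the identity, and everything reduces to showing that an endomorphism $\gamma$ of $\pbgm$ in $\pb$ is homotopic to the identity; your object-level computation $\gamma_s(p)\simeq p_\sharp\gamma_{s'}(\1{P_{s'}})\simeq p_\sharp\1{P_{s'}}\simeq p$ is precisely the identification the paper uses to define its natural transformation $\eta_p$. The gap lies in the step you defer, which is where the paper's actual proof content sits. Your claim that ``every morphism of $P_s$ is $q_\sharp$ of a canonical arrow to a terminal object in some $P_t$'' is false in general: a morphism $f:(p:s'\to s)\to(q:t'\to s)$ in $P_s$ is an arbitrary morphism $s'\to t'$ of $S$ over $s$ and need not lie in $P$ (think of a closed immersion between smooth $s$-schemes when $P$ consists of smooth morphisms), so $f$ is typically not an object of $P_{t'}$ and is not in the image of $q_\sharp$ applied to an arrow to $\id_{t'}$. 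Hence your ``parallel argument on morphisms'' does not establish naturality of the family $(\eta_p)_p$, and no alternative mechanism is supplied. The paper handles exactly this point by an explicit diagram chase in $\cO_S^P$: it checks that the isomorphisms $\eta_p$, built from preservation of the unit together with the structural map $p\circ F(p')\isoto F(p\circ p')$, are natural against \emph{all} morphisms of $\cO_S^P$, including those whose component $s'\to t'$ is not in $P$, using functoriality of $F$ and of the construction of that structural map. As written, your proposal replaces this verification by a sketch whose one concrete ingredient fails, so the proof is incomplete at its decisive step.

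A smaller remark: you do not need the homotopy itself to ``belong to $\pb$'' or to ``commute with $p_\sharp$''. Membership in $\pb$ is a condition on $1$-morphisms that is invariant under homotopy, so $\map[\pb]{\pbgm}{\pbgm}$ is a union of connected components of the mapping space in $\fun{S\op}{\calg{\iCAT}}$, and any homotopy there between $\alpha\circ\beta$ and $\id_{\pbgm}$ suffices. Correspondingly, the paper only constructs a natural isomorphism of the underlying endofunctors over $S\op$; compatibility with the monoidal structures is automatic because everything in sight is an ordinary category with (co)Cartesian tensor structure, so no adjointability condition needs to be re-verified for the homotopy.
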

\begin{proof}
By \Cref{sta:pb-presentable}, the very large \icat $\pb$ is presentable and therefore admits an initial object $\pbinit$.
In particular, we obtain a morphism $\phi:\pbinit\to\pbgm$.
Let $\psi=\morgm[\pbinit]{\blank}:\pbgm\to\pbinit$ be the morphism constructed in \Cref{sta:geometric-morphism}.
It follows that the composite $\psi\circ\phi$ is homotopic to the identity, and it suffices to show that the composite $\phi\circ\psi$ is equally homotopic to the identity.

For this, use the equivalence of \Cref{tensor-un-straightening} to translate it into the language of $S\op$-monoidal \icats.
In this language we have an $S\op$-monoidal functor (\cf \Cref{rmk:pb-gm}):
\[
\begin{tikzcd}
(\cO^P_S)^{\opname,\amalg}
\ar[rr, "F"]
\ar[dr, "\on{ev}_1" below]
&&
(\cO^P_S)^{\opname,\amalg}
\ar[ld, "\on{ev}_1"]
\\
&
S^{\opname,\amalg}
\end{tikzcd}
\]
These are all $1$-categories, and it will therefore be sufficient to construct a natural isomorphism $\eta:\id\isoto F$ on the level of $1$-categories.
Also, $F$ being symmetric monoidal is equivalent to its underlying functor preserving finite coproducts, and $\eta$ will automatically be compatible with finite coproducts.
It therefore suffices to construct a natural isomorphism $\eta:\id\isoto F$ of the underlying endofunctors on $(\cO^P_S)\op$ over $S\op$.
Also, we may pass to the opposite categories to ease notation.
Summarizing then, we are given an endofunctor $F:\cO^P_S\to\cO_S^P$ over $S$ which preserves finite limits, and such that the canonical map
\begin{equation}
p\circ F(p')\isoto F(p\circ p')\label{eq:bc-geometric}
\end{equation}
is an isomorphism for any $P$-morphisms $p,p'$.
Our goal is to prove that $F$ is naturally isomorphic to $\id_{\cO^P_S}$ over $S$.

Let $s\in S$ and consider the unique morphism $\pi_s:s\to \1{S}$ to the final object.
We take a Cartesian lift of $\pi_s$ in $\cO_S^P$, and recall that $F$ preserves Cartesian squares:
\[
\begin{tikzcd}
s
\ar[r, "\pi_s"]
&
\1{S}
\\
s
\ar[u, "\id_s"]
\ar[r, "\pi_s"]
&
\1{S}
\ar[u, "\id_{\1{S}}" right]
\end{tikzcd}
\qquad
\stackrel{F}{\rightsquigarrow}
\quad
\begin{tikzcd}
s
\ar[r, "\pi_s"]
&
\1{S}
\\
\bar{s}
\ar[u, "F(\id_s)"]
\ar[r, "\pi_s'"]
&
\bar{\1{S}}
\ar[u, "F(\id_{\1{S}})" right]
\end{tikzcd}
\]
As $\id_{\1{S}}$ is a final object of $\cO^P_S$ and as $F$ preserves final objects, we deduce that $F(\id_{\1{S}})$ and hence $F(\id_s)$ are isomorphisms.
This allows us to define an isomorphism $\eta_s:\id_s\isoto F(\id_s)$:
\[
\begin{tikzcd}
s
\ar[r, "\id_s"]
&
s
\\
s
\ar[u, "\id_s"]
\ar[r, "F(\id_s)^{-1}" below]
&
\bar{s}
\ar[u, "F(\id_s)" right]
\end{tikzcd}
\]
Given a $P$-morphism $p:s'\to s$, define the isomorphism $\eta_p:p\xto{\sim} F(p)$ as the following composite of isomorphisms:
\begin{equation}
\label{eq:eta_p}
p=p\circ\id_{s'}\xto[\sim]{p\circ\eta_{s'}}p\circ F(\id_{s'})\xto[\sim]{\textup{(\ref{eq:bc-geometric})}}F(p\circ\id_{s'})=F(p)
\end{equation}
We need to show that $\eta_p$ is natural in $p$ and that $\on{ev}_1(\eta_p)=\id_s$.
We start with the former.
Let us be given a morphism $(f,f'):p\to q$ in $\cO^P_S$:
\[
\begin{tikzcd}
s
\ar[r, "f"]
&
t
\\
s'
\ar[u, "p"]
\ar[r, "f'"]
&
t'
\ar[u, "q" right]
\end{tikzcd}
\]
We will show that each of the two squares in the following diagram commutes:
\begin{equation}
\label{eq:eta-natural}%
\begin{tikzcd}
p\circ\id_{s'}
\ar[r, "p\circ\eta_{s'}"]
\ar[d, "{(f,f')\circ(f',f')}" left]
&
p\circ F(\id_{s'})
\ar[r, "\text{(\ref{eq:bc-geometric})}"]
\ar[d, "{(f,f')\circ F(f',f')}"]
&
F(p\circ\id_{s'})
\ar[d, "{F((f,f')\circ (f',f'))}"]
\\
q\circ\id_{t'}
\ar[r, "q\circ\eta_{t'}" below]
&
q\circ F(\id_{t'})
\ar[r, "\text{(\ref{eq:bc-geometric})}" below]
&
F(q\circ\id_{t'})
\end{tikzcd}
\end{equation}

For the left square in~(\ref{eq:eta-natural}), let us spell out the two paths from $p\circ\id_{s'}$ to $q\circ F(\id_{t'})$:
\begin{figure}[h]
\centering
\begin{subfigure}{.33\textwidth}
\centering
\begin{tikzcd}[ampersand replacement=\&]
\&
s
\ar[r, "\id_s"]
\&
s
\ar[r, "f"]
\&
t
\\
\&
s'
\ar[u, "p"]
\ar[r, "\id_{s'}"]
\&
s'
\ar[u, "p"]
\ar[r, "f'"]
\&
t'
\ar[u, "q" right]
\\
\&
s'
\ar[u, "\id_{s'}"]
\ar[r, "F(\id_{s'})^{-1}" below]
\&
\overline{s'}
\ar[u, "F(\id_{s'})" right]
\ar[r]
\&
\overline{t'}
\ar[u, "F(\id_{t'})" right]
\end{tikzcd}
\caption{path along $\rrd$}
\end{subfigure}
\hspace{2cm}
\begin{subfigure}{.33\textwidth}
\centering
\begin{tikzcd}[ampersand replacement=\&]
\&
s
\ar[r, "f"]
\&
t
\ar[r, "\id_t"]
\&
t
\\
\&
s'
\ar[r, "f'"]
\ar[u, "p"]
\&
t'
\ar[u, "q"]
\ar[r, "\id_{t'}"]
\&
t'
\ar[u, "q" right]
\\
\&
s'
\ar[u, "\id_{s'}"]
\ar[r, "f'" below]
\&
t'
\ar[u, "\id_{t'}"]
\ar[r, "F(\id_{t'})^{-1}" below]
\&
\overline{t'}
\ar[u, "F(\id_{t'})" right]
\end{tikzcd}
\caption{path along $\drr$}
\end{subfigure}
\end{figure}

We only need to verify that the two composites of the bottom horizontal arrows $s'\to \overline{t'}$ coincide.
But this follows from commutativity of the bottom right square (which is $F(f', f')$) in the first diagram.

We now turn to the right square in~(\ref{eq:eta-natural}), where we can prove more generally that the map~(\ref{eq:bc-geometric}) is natural in both $p$ and $p'$.
Let us recall how the map~(\ref{eq:bc-geometric}) is constructed.
Given $P$-morphisms $p:s'\to s$ and $p':s''\to s'$, consider the commutative diagram in $S$ corresponding to a composite of morphisms in $\cO_S^P$
\begin{equation}
\label{eq:bc-F-1}
\begin{tikzcd}
s'
\ar[r, "\id_{s'}"]
&
s'
\ar[r, "p"]
&
s
\\
s''
\ar[u, "p'"]
\ar[r]
&
t
\ar[r]
\ar[u]
&
s''
\ar[u,"p\circ p'" right]
\end{tikzcd}
\end{equation}
where the right hand square is Cartesian, and the composite of the bottom horizontal arrows is $\id_{s''}$.
Applying $F$ we obtain another diagram
\begin{equation}
\label{eq:bc-F-2}
\begin{tikzcd}
s'
\ar[r, "\id_{s'}"]
&
s'
\ar[r, "p"]
&
s
\\
x
\ar[u, "F(p')"]
\ar[r]
&
y
\ar[r]
\ar[u]
&
z
\ar[u,"{F(p\circ p')}" right]
\end{tikzcd}
\end{equation}
Rearranging this diagram, it defines a morphism $p\circ F(p')\to F(p\circ p')$ which is~(\ref{eq:bc-geometric}).
Now, it is clear that the construction of~(\ref{eq:bc-F-1}) is natural in $p$ and $p'$, and so is~(\ref{eq:bc-F-2}), since $F$ is a functor.
This completes the proof that $\eta_p$ is natural in~$p$.

From the construction of~(\ref{eq:bc-geometric}) just recalled we also see that its image under $\on{ev}_1$ is the identity.
As this is obviously true as well for the first arrow in~(\ref{eq:eta_p}) we conclude that $\on{ev}_1(\eta_p)=\id_s$ as required.
\end{proof}

\section{Cocompletion}
\label{sec:cocompletion}
In this section, we study the \subicat of $\pb$ spanned by those pullback formalisms that are cocomplete in the following sense.
\begin{defn}
\label{defn:pb-cocomplete}%
We say that a pullback formalism $C:S\op\to\calg{\iCAT}$ is \emph{cocomplete} if 
\begin{enumerate}[(1)]
\item
for each $s \in S$, the \icat $C(s)$ admits small colimits;
\item
for each $s \in S$, the tensor-product bifunctor $\otimes$ on $C(s)$ preserves small colimits separately in each variable; and
\item
for each morphism $f: t \to s$ of $S$, $f^*:C(s)\to C(t)$ preserves small colimits.
\end{enumerate}
A \emph{morphism of cocomplete pullback formalisms} is a morphism $\phi:C\to C'$ of pullback formalisms such that for each $s\in S$, the functor $\phi_s:C(s)\to C'(s)$ preserves small colimits.
This defines the \icat of \emph{cocomplete pullback formalisms} $\pbco$ as a \subicat of $\pb$.
\end{defn}

\begin{rmk}
\label{rmk:pbco-Cartesian}%
In other words, the \icat of cocomplete pullback formalisms fits into a Cartesian square in $\ICAT$:
\begin{equation}
\label{eq:pbco-Cartesian}
\begin{tikzcd}
\pbco
\ar[r, hookrightarrow]
\ar[d]
&
\pb
\ar[d]
\\
\fun{S\op}{\calg{\iCATco}}
\ar[r, hookrightarrow]
&
\fun{S\op}{\calg{\iCAT}}
\end{tikzcd}
\end{equation}
\end{rmk}

\begin{prop}
\label{sta:pbco-presentable}%
The very large $\pbco$ is presentable, and the inclusion $\pbco\into\pb$ admits a left adjoint.
\end{prop}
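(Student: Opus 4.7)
The strategy is to follow the template set by the proof of \Cref{sta:pb-presentable}, this time applied to the Cartesian square~\eqref{eq:pbco-Cartesian}.

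The preliminary step is to verify that $\calg{\iCATco}$ is presentable and that the forgetful functor $\calg{\iCATco} \to \calg{\iCAT}$ is a right adjoint. The argument parallels \Cref{rmk:calg-cat}: the very large \icat $\iCATco$ is presentable and the Cartesian product on $\iCATco$ is closed (a cocontinuous functor out of a product of cocomplete \icats is cocontinuous separately in each variable), so the combination of \cite[Corollaries~3.2.2.5 and~3.2.3.2]{Lurie_higher-algebra} shows that $\calg{\iCATco}$ is presentable and that $\calg{\iCATco}\to\iCATco$ detects limits and sifted colimits. Using in addition that $\iCATco \hookrightarrow \iCAT$ is a reflective subcategory inclusion, one then deduces that $\calg{\iCATco} \to \calg{\iCAT}$ preserves (large) limits and is accessible, hence is a right adjoint. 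Applying \cite[Proposition~5.5.3.6]{Lurie_higher-topos} pointwise then shows that $\fun{S\op}{\calg{\iCATco}}$ is presentable and that the inclusion $\fun{S\op}{\calg{\iCATco}} \hookrightarrow \fun{S\op}{\calg{\iCAT}}$ is a right adjoint.

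The proof is then completed exactly as in \Cref{sta:pb-presentable}. Since very large presentable \icats together with right-adjoint functors are stable under large limits in $\ICAT$, the Cartesian square~\eqref{eq:pbco-Cartesian} exhibits $\pbco$ as presentable, with both projections right adjoints. In particular, the inclusion $\pbco \hookrightarrow \pb$ is a right adjoint, and hence admits a left adjoint, which is the claimed reflector.

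The only real content sits in the preliminary step, namely in establishing that $\iCATco \hookrightarrow \iCAT$ is reflective in a form compatible with the Cartesian symmetric monoidal structure; once that is in place, the rest is a formal consequence of the closure properties of presentable \icats under limits along right adjoints, as already exploited in \Cref{sta:pb-presentable}.
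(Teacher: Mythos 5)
Your overall strategy is the same as the paper's: use the Cartesian square~\eqref{eq:pbco-Cartesian} together with the fact (already exploited in \Cref{sta:pb-presentable}) that very large presentable \icats and right adjoints are closed under limits in $\ICAT$, so everything reduces to showing that $\fun{S\op}{\calg{\iCATco}}\into\fun{S\op}{\calg{\iCAT}}$ is a right adjoint between presentables. The problem is that your ``preliminary step'', which you rightly identify as the only real content, is not correct as written. First, the relevant symmetric monoidal structure on $\iCATco$ is \emph{not} the Cartesian one: by the paper's conventions it is the tensor product of~\cite[\S\,4.8.1]{Lurie_higher-algebra}, and this matters, because $\pbco$ (\Cref{defn:pb-cocomplete}) requires $\otimes$ on $C(s)$ to preserve colimits \emph{separately in each variable}, which is exactly the condition defining commutative algebras for the Lurie tensor product. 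A commutative monoid for the Cartesian structure on $\iCATco$ would instead have multiplication $\cC\times\cC\to\cC$ preserving colimits of the product \icat, i.e.\ \emph{jointly}, which is a different (and wrong) condition. Your parenthetical justification --- ``a cocontinuous functor out of a product of cocomplete \icats is cocontinuous separately in each variable'' --- is false: for a two-point discrete diagram, joint cocontinuity asks $(c_1\amalg c_2)\otimes(d_1\amalg d_2)\simeq (c_1\otimes d_1)\amalg(c_2\otimes d_2)$, while separate cocontinuity gives the coproduct over all four pairs. So with your choice of monoidal structure the square~\eqref{eq:pbco-Cartesian} would not compute $\pbco$.

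Second, $\iCATco\into\iCAT$ is not a reflective subcategory inclusion: it is not full (morphisms in $\iCATco$ are required to preserve colimits). It does admit a left adjoint (free cocompletion, $\mathcal{P}^{\mathcal{K}}_{\emptyset}$ of~\cite[Corollary~5.3.6.10]{Lurie_higher-topos}), but your deduction from ``reflectivity'' to limit-preservation and accessibility of $\calg{\iCATco}\to\calg{\iCAT}$ is not spelled out, and accessibility is precisely where the work would lie if one argues via the adjoint functor theorem. The paper sidesteps all of this by citing~\cite[Remark~4.8.1.8]{Lurie_higher-algebra}, which directly provides the left adjoint to $\calg{\iCATco}\into\calg{\iCAT}$ as composition with the symmetric monoidal functor $\mathcal{P}^{\mathcal{K}}_{\emptyset}$; this explicit description is moreover exactly what is used later in \Cref{sta:cocompletion} to identify the reflector with pointwise free cocompletion and Day convolution. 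To repair your argument, replace the Cartesian structure by the tensor product of cocomplete \icats and either invoke~\cite[Remark~4.8.1.8]{Lurie_higher-algebra} or give an honest accessibility argument for the forgetful functor on algebras.
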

\begin{proof}
By~\cite[Remark~4.8.1.9]{Lurie_higher-algebra}, the inclusion $\calg{\iCATco}\into\calg{\iCAT}$ admits a left adjoint, which is given by composition with the symmetric monoidal functor $\mathcal{P}^{\mathcal{K}}_{\emptyset}$ of~\cite[Corollary~5.3.6.10]{Lurie_higher-topos}, where $\mathcal{K}$ denotes the (large) set of all small simplicial sets.
Together with \Cref{sta:pb-presentable}, this implies that the cospan defining $\pbco$ in~(\ref{eq:pbco-Cartesian}) consists of presentable \icats and right adjoint functors.
Thus $\pbco$ is itself presentable, and the inclusion $\pbco\into\pb$ admits a left adjoint.
\end{proof}

We define two additional \icats of pullback formalisms:
\begin{defn}
\label{defn:pb-presentable}%
\begin{itemize}
\item
The \icat of \emph{small pullback formalisms} is the full \subicat of $\pb$ spanned by $C$ such that $C(s)\in\iCat$ for all $s\in S$.
It is denoted $\pbsm$.
\item
The \icat of \emph{presentable pullback formalisms} is the full \subicat of $\pbco$ spanned by $C$ such that $C(s)$ is presentable for all $s\in S$.
It is denoted $\pbpr$.
\end{itemize}
\end{defn}

\begin{rmk}
\label{rmk:pb-presentable}%
If $C\in\pbpr$ then for each $s\in S$, $C(s)^\otimes$ is automatically \emph{presentably symmetric monoidal}, that is, the underlying \icat $C(s)$ is presentable and the tensor bifunctor $C(s)\times C(s)\to C(s)$ preserves colimits in each variable separately.
This implies the existence of internal homs.
\end{rmk}

\begin{prop}
\label{sta:cocompletion}%
The left adjoint of \Cref{sta:pbco-presentable} fits into an essentially commutative square
\[
\begin{tikzcd}
\pb
\ar[r]
&
\pbco
\\
\pbsm
\ar[u, hookrightarrow]
\ar[r, "\hat{\pblank}"]
&
\pbpr
\ar[u, hookrightarrow]
\end{tikzcd}
\]
where the functor $C\mapsto \hat{C}$ sends a small pullback formalism to its (pointwise) free cocompletion, endowed with the Day convolution symmetric monoidal structure.
\end{prop}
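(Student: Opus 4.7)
The plan is to construct $\hat{C}$ explicitly and verify that it represents the image of $C\in\pbsm$ under the reflection $L:\pb\to\pbco$ of \Cref{sta:pbco-presentable}. For $C\in\pbsm$, define $\hat{C}(s):=\psh{C(s)}$ endowed with the Day convolution symmetric monoidal structure, and for each $f:t\to s$ in $S$, set $\hat{f}^*:=(f^*)_!$, the symmetric monoidal cocontinuous extension of $y_t\circ f^*$ along $y_s$, which exists by the universal property of Day convolution together with \cite[Remark~4.8.1.8]{Lurie_higher-algebra}. This assembles into a functor $\hat{C}:S\op\to\calg{\PrL}$ together with a pointwise Yoneda transformation $\iota:C\to\hat{C}$ in $\fun{S\op}{\calg{\iCAT}}$.

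Next, I would verify $\hat{C}\in\pbpr$. Since $\psh{\pblank}:\iCat\to\PrL$ is a functor of $(\infty,2)$-categories and therefore preserves adjunctions, the adjunction $p_\sharp\dashv p^*$ for $p\in P$ yields an adjunction $\hat{p}_\sharp\dashv\hat{p}^*$ in $\PrL$ with $\hat{p}_\sharp:=(p_\sharp)_!$; in particular, $\hat{p}_\sharp\circ y_{s'}\simeq y_s\circ p_\sharp$. The $P$-base change axiom for $\hat{C}$ follows by applying $\psh{\pblank}$, viewed as a $2$-functor, to the base change equivalence $p'_\sharp(f')^*\simeq f^*p_\sharp$ for $C$. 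For the generalized projection formula (\Cref{generalized-projection-formula}), the relevant exchange transformation on $\hat{C}$ is a natural transformation between functors that are cocontinuous in each variable; it therefore suffices to verify invertibility on pairs of representables, where $y(X)\otimes_{\mathrm{Day}}y(Y)\simeq y(X\otimes Y)$ and $\hat{p}_\sharp\circ y\simeq y\circ p_\sharp$ reduce the statement to the projection formula for $C$. This also shows that $\iota$ is a morphism in $\pb$.

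Finally, I would verify the universal property of $\iota$: for $D\in\pbco$, the precomposition map $\map[\pbco]{\hat{C}}{D}\to\map[\pb]{C}{D}$ is an equivalence. The pointwise universal property of free cocompletion with Day convolution yields
\[
\map[\fun{S\op}{\calg{\iCATco}}]{\hat{C}}{D}\isoto\map[\fun{S\op}{\calg{\iCAT}}]{C}{D},
\]
so it suffices to show that the exchange condition defining $\pbco$ holds for $\hat\phi:\hat{C}\to D$ iff the corresponding condition defining $\pb$ holds for its restriction $\phi:=\hat\phi\circ\iota:C\to D$. Since $\hat\phi$, $\hat{p}_\sharp$, and the left adjoints $p_\sharp$ in $D$ are all cocontinuous, the exchange transformation on $\hat{C}$ is a natural transformation between cocontinuous functors $\psh{C(s')}\to D(s)$, hence an equivalence iff it is so on representables, where $\hat{p}_\sharp\circ y\simeq y\circ p_\sharp$ identifies it with the exchange for $\phi$.

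The main obstacle is the middle step, verifying the axioms of a pullback formalism on $\hat{C}$. The crux is the identification $\hat{p}_\sharp=(p_\sharp)_!$, which both exhibits $\hat{p}_\sharp$ as a left adjoint to $\hat{p}^*$ via the $2$-functoriality of $\psh{\pblank}$ and ensures that $\hat{p}_\sharp$ restricts to $p_\sharp$ on representables, so that the axioms on $\hat{C}$ reduce via cocontinuity to the corresponding axioms on $C$.
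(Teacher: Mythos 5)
Your proposal is correct, and it lands on the same description (pointwise presheaves with Day convolution) but by a noticeably different route than the paper. The paper's proof is essentially two citations on top of machinery already in place: in the proof of \Cref{sta:pbco-presentable} the reflection of $\pb$ onto $\pbco$ is produced from the Cartesian square~\eqref{eq:pbco-Cartesian}, whose bottom row is reflected by pointwise composition with the symmetric monoidal functor $\mathcal{P}^{\mathcal{K}}_{\emptyset}$, so the proof of the proposition only identifies $\mathcal{P}^{\mathcal{K}}_{\emptyset}$ on small inputs with $\psh{\pblank}$ and the induced monoidal structure with Day convolution. You instead verify the universal property of $\hat{C}$ from scratch: you check that $\hat{C}$ is a presentable pullback formalism (via $\hat{p}_\sharp=(p_\sharp)_!$ and reduction to representables), that the Yoneda unit $\iota$ lies in $\pb$, and that precomposition with $\iota$ identifies $\map[\pbco]{\hat{C}}{D}$ with $\map[\pb]{C}{D}$, the key observation being that the exchange condition for $\hat\phi$ can be tested on representables because all functors in sight are cocontinuous. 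What your argument buys is exactly the content the paper's terse proof leaves implicit --- why the pointwise cocompletion of a $P$-adjointable functor is again $P$-adjointable and why the non-full morphism conditions on the two sides of the free-cocompletion adjunction correspond --- at the cost of redoing by hand, in this case, what the adjointable-squares formalism of \Cref{sta:pb-presentable,sta:pbco-presentable} is designed to package. Two small points to tighten: your appeals to $(\infty,2)$-functoriality of $\psh{\pblank}$ and to compatibility of Beck--Chevalley transformations with restriction along the Yoneda embedding are standard but deserve either a citation or the explicit identification $(p^*)_!\simeq (p_\sharp)^*$, which yields both the adjunction $\hat{p}_\sharp\dashv\hat{p}^*$ and the comparison of mates on representables; and for the square to commute as stated (not merely objectwise) you should note that $C\mapsto\hat{C}$ is functorial, being composition with $\calg{\mathcal{P}^{\mathcal{K}}_{\emptyset}}$, after which the objectwise universal properties assemble canonically.
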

\begin{proof}
First we note that there is indeed such a functor $\hat{\pblank}:\pbsm\to\pbpr$.
An adjunction of functors between small \icats passes to an adjunction on the free cocompletions, by left Kan extension along the Yoneda embedding.
Since left Kan extensions preserve colimits, the $P$-base change property and the $P$-projection formula for $\hat{C}$ follow from the ones for $C$, for every small pullback formalism $C$.

As seen in the proof of \Cref{sta:pbco-presentable}, the left adjoint
\[
\fun{S\op}{\calg{\iCAT}}\to\fun{S\op}{\calg{\iCATco}}
\]
to the canonical inclusion is given by pointwise composition with $\mathcal{P}^{\mathcal{K}}_{\emptyset}$, where $\mathcal{K}$ denotes the (large) set of all small simplicial sets.
By~\cite[Example~5.3.6.6]{Lurie_higher-topos}, this functor coincides with the free cocompletion $\psh{\blank}$ on small \icats, and the induced symmetric monoidal structure is Day convolution, by~\cite[Remark~4.8.1.13]{Lurie_higher-algebra}.

The claim is now a formal consequence of these two observations.
\end{proof}

\begin{exa}
\label{exa:pbgmco}%
The geometric pullback formalism $\pbgm$ is small.
It follows then from \Cref{sta:cocompletion} that its image $\pbgmco$ under the left adjoint of \Cref{sta:pbco-presentable} may be described as $s\mapsto \psh{P_s}$ endowed with the Day convolution product, which is just the pointwise product in the \icat of small spaces.
\end{exa}

\begin{cor}
\label{sta:pbco-init}%
The pullback formalism $\pbgmco$ is an initial object of both $\pbpr$ and $\pbco$.
\end{cor}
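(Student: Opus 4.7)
The proof is a short formal consequence of the results already established, so the plan is essentially to assemble three ingredients.

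First, I would invoke \Cref{sta:pbgm-initial} which asserts that $\pbgm$ is initial in $\pb$. The left adjoint $L : \pb \to \pbco$ of the inclusion, whose existence is the content of \Cref{sta:pbco-presentable}, preserves all colimits, and in particular preserves initial objects. Hence $L(\pbgm)$ is an initial object of $\pbco$. By \Cref{sta:cocompletion} (applied to the small pullback formalism $\pbgm$) together with \Cref{exa:pbgmco}, the pullback formalism $L(\pbgm)$ is precisely $\pbgmco$, the functor $s \mapsto \psh{P_s}$ with Day convolution. This gives the initiality of $\pbgmco$ in $\pbco$.

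Second, for initiality in $\pbpr$ I would observe that $\pbgmco$ actually lies in $\pbpr$: indeed each presheaf \icat $\psh{P_s}$ is presentable (as $P_s$ is small), so $\pbgmco$ satisfies the pointwise presentability condition of \Cref{defn:pb-presentable}. Since $\pbpr$ is by definition a \emph{full} \subicat of $\pbco$, for every $C \in \pbpr$ the mapping space $\map[\pbpr]{\pbgmco}{C}$ coincides with $\map[\pbco]{\pbgmco}{C}$, and the latter is contractible by the first step. This shows $\pbgmco$ is also initial in $\pbpr$.

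There is no serious obstacle here: the entire argument is the general fact that a left adjoint sends an initial object to an initial object, combined with the identification $L(\pbgm) \simeq \pbgmco$ recorded in \Cref{exa:pbgmco} and the fullness of the inclusion $\pbpr \hookrightarrow \pbco$. The only point requiring any verification at all is that $\pbgmco$ actually belongs to the smaller \subicat $\pbpr$, which is immediate from the presentability of presheaf \icats on small categories.
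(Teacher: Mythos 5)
Your proof is correct and is essentially the paper's own argument: the paper deduces the corollary from \Cref{sta:pbgm-initial} together with \Cref{sta:cocompletion}, i.e.\ the left adjoint of \Cref{sta:pbco-presentable} carries the initial object $\pbgm$ to $\pbgmco$, which then lies in the full \subicat $\pbpr$. Your explicit remarks about left adjoints preserving initial objects and about fullness of $\pbpr\hookrightarrow\pbco$ simply spell out what the paper leaves implicit.
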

\begin{proof}
This follows from \Cref{sta:pbgm-initial} and \Cref{sta:cocompletion}.
\end{proof}

\section{Descent and homotopy invariance}
\label{sec:desc-homot-invar}
If $S$ is endowed with a Grothendieck topology $\tau$ and a distinguished `interval object' $\ivl\in S$ (a `site with interval' in the sense of~\cite{Morel-Voevodsky_A1-homotopy-theory}, although we won't need to know the exact meaning of this), we are particularly interested in those pullback formalisms which are `compatible' with these data:
They satisfy non-effective $\tau$-descent and $\ivl$-homotopy invariance.
We will follow the pattern established in earlier sections:
First we single out these pullback formalisms making sure that we remain in the context of (very large) presentable \icats (\Cref{sec:pb-local}).
Then (\Cref{sec:localization}) we describe more explicitly the result of enforcing these conditions for pullback formalisms of interest (namely, presentable ones).

\subsection{Local pullback formalisms}
\label{sec:pb-local}
In order to deal with descent and homotopy invariance at the same time, and to allow for a certain flexibility in applications, we will use the following setup.

\begin{notn}
We denote by $S_{\amalg}\subseteq\psh{S}$ the full subcategory generated by the image of the Yoneda embedding $S\into\psh{S}$ and closed under coproducts.
Note that a morphism $f:\coprod_is^{(i)}\to\coprod_jt^{(j)}$ in $S_\amalg$ is determined by a family of morphisms $f^{(i)}:s^{(i)}\to t^{(j_i)}$ in $S$.
We say that $f$ is a $P$-morphism if each $f^{(i)}$ belongs to $P$.
This defines subcategories
\[
P_\amalg\subseteq S_\amalg\subseteq\psh{S}.
\]
\end{notn}

Also, for a simplicial set $I$ we denote by $I^{\addfinal}$ the join $I\ast\Delta^0$ which is $I$ together with a (new) final object denoted $\infty$. 
\begin{notn}
We fix a (possibly large) set $\lcl$ of diagrams of the form
\[
u:I^{\addfinal}\to P_\amalg
\]
such that each simplicial set $I$ is small.
\end{notn}

\begin{rmk}
\label{rmk:kan-extension}%
Let $C:S\op\to\iCAT$ be a functor.
Since $\iCAT$ admits all small (even large) limits, $C$ admits a right Kan extension along the Yoneda embedding:
\[
\begin{tikzcd}
S\op
\ar[r, "C"]
\ar[d, hookrightarrow, "\yon_S" left]
&
\iCAT
\\
\psh{S}\op
\ar[ur, "\overline{C}" below]
\end{tikzcd}
\]
It follows from~\cite[Proposition~4.3.3.7]{Lurie_higher-topos} that the association $C\mapsto\overline{C}$ defines a right adjoint to restriction along the Yoneda embedding:
\[
\yon_S^*:\fun{\psh{S}\op}{\iCAT}\rightleftarrows\fun{S\op}{\iCAT}:\overline{\pblank}
\]
\end{rmk}

\begin{defn}
\label{defn:pb-local}%
Let $C\in\pbco$ and $u:I^\addfinal\to P_\amalg$ a diagram in $\lcl$.
We say that $C$ is \emph{local with respect to $u$} if the canonical functor
\begin{equation}
\label{eq:u-star}%
u^*:\overline{C}(u_\infty)\to\lim_{I\op}\overline{C}(u|_I)
\end{equation}
is fully faithful.
And $C$ is \emph{($\lcl$-)local} if it is local with respect to all $u\in\lcl$.
This defines a full \subicat $\pbl\subseteq\pbco$.
\end{defn}

Here is the main result of this subsection.
\begin{prop}
\label{sta:pbl-presentable}%
The very large \icat $\pbl$ is presentable and the inclusion $\pbl\into\pbco$ admits a left adjoint.
\end{prop}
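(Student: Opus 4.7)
The plan is to follow the pattern of the proofs of \Cref{sta:pb-presentable} and \Cref{sta:pbco-presentable}: express $\pbl$ as a Cartesian square in $\ICAT$ whose bottom edge is the inclusion of a reflective sub-\icat into a presentable \icat, and whose right edge is a right adjoint. Since presentable \icats together with right adjoint functors between them are closed under (large) limits in $\ICAT$, such a square automatically yields both presentability of $\pbl$ and the existence of a left adjoint to $\pbl\into\pbco$.

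Concretely, for each $u \in \lcl$ I will construct a functor
\[
\mu_u\colon \pbco \to \fun{\Delta^1}{\iCATco}
\]
sending a cocomplete pullback formalism $C$ to the canonical arrow $u^*\colon\overline{C}(u_\infty)\to\lim_{I\op}\overline{C}(u|_I)$, and verify that $\mu_u$ is a right adjoint. It decomposes as a composite of right adjoints: the cocomplete variant of the right Kan extension $(\blank)\mapsto\overline{(\blank)}$ from \Cref{rmk:kan-extension}; restriction along $u\colon I^\addfinal\to P_\amalg\into\psh{S}$; and the natural transformation from evaluation at $\infty$ to the limit over $I\op$ (viewed as a functor with target $\fun{\Delta^1}{\iCATco}$), which is again a right adjoint. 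That the target $\lim_{I\op}\overline{C}(u|_I)$ really lies in $\iCATco$ follows because the forgetful $\iCATco\to\iCAT$ preserves small limits and the transition maps in $\overline{C}\circ u|_I$ are colimit-preserving, being built from pullbacks along $P$-morphisms by the coproduct decomposition of $P_\amalg$.

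Next, I verify that the full sub-\icat $\fun[\mathrm{ff}]{\Delta^1}{\iCATco}\subseteq\fun{\Delta^1}{\iCATco}$ spanned by fully faithful morphisms is presentable and reflective. Closure under small limits is immediate: mapping spaces in a limit of \icats are limits of mapping spaces, so a limit of fully faithful functors is fully faithful. Closure under sufficiently filtered colimits follows because filtered colimits commute with finite limits in spaces, preserving the condition that a particular map of mapping spaces be an equivalence. An application of \cite[Corollary~5.5.1.4]{Lurie_higher-topos} then yields that this sub-\icat is presentable and the inclusion admits a left adjoint.

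Putting the pieces together, $\pbl$ sits in the Cartesian square
\[
\begin{tikzcd}
\pbl \ar[r,hookrightarrow] \ar[d] & \pbco \ar[d, "\prod_u\mu_u"] \\
\prod_{u\in\lcl}\fun[\mathrm{ff}]{\Delta^1}{\iCATco} \ar[r,hookrightarrow] & \prod_{u\in\lcl}\fun{\Delta^1}{\iCATco}
\end{tikzcd}
\]
in $\ICAT$, and closure of presentables plus right adjoints under large limits closes the argument. The main obstacle I anticipate is the accessibility half of the fully-faithful reflectivity: one must pin down a regular cardinal $\kappa$ so that $\kappa$-filtered colimits in $\fun{\Delta^1}{\iCATco}$ preserve full faithfulness. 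In the merely cocomplete (not necessarily presentable) setting this needs some care about how the filtered colimits interact with mapping spaces, but it should ultimately be governed by the size of the indexing simplicial sets $I$ in $\lcl$.
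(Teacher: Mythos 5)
Your overall architecture is the same as the paper's: realize $\pbl$ as a pullback of a reflective, presentable full \subicat along a right-adjoint comparison functor out of $\pbco$, and conclude by closure of presentables and right adjoints under large limits; your functor $\mu_u$ is essentially the paper's $\on{lc}_u$ (\Cref{sta:lc-functor}) minus the last step that assembles a square. The genuine difference is the choice of reflective target: you pull back the \emph{full} \subicat $\fun[\mathrm{ff}]{\Delta^1}{\iCATco}$ of fully faithful arrows, whereas the paper first proves that $u^*$ admits a left adjoint $u_\sharp$ for every $C\in\pbco$ (\Cref{sta:pbco-u-sharp}), re-encodes locality as left adjointability of the square $\on{lc}_u(C)$ (invertibility of the counit $u_\sharp u^*\to\id$), and then quotes \cite[Corollary~4.7.4.18]{Lurie_higher-algebra} for presentability and reflectivity of $\SqLAd\subseteq\Sq$; because $\SqLAd$ is a non-full \subicat, the paper must additionally check the morphism-level condition (\Cref{sta:pbco-u-sharp-phi}) to get the Cartesian square of \Cref{rmk:pb-local-cartesian}. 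Your fullness trick makes the square Cartesian for free and dispenses with both of those lemmas, which is a real simplification \emph{if} your replacement ingredient is available.

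That replacement ingredient is exactly where the gap sits. You need that $\fun[\mathrm{ff}]{\Delta^1}{\iCATco}\subseteq\fun{\Delta^1}{\iCATco}$ is presentable and reflective, and you derive this from ``closed under small limits and sufficiently filtered colimits'' plus \cite[Corollary~5.5.1.4]{Lurie_higher-topos}. The cited result does not say this, and the general principle you are invoking is the $\infty$-categorical reflection theorem (closure under limits and $\kappa$-filtered colimits of a full \subicat of a presentable \icat implies presentability and reflectivity), whose hard half is accessibility of the \subicat; it is not in \cite{Lurie_higher-topos} and does not follow formally from the two closure properties you verify. Note also that your worry about the cardinality of the indexing sets $I$ in $\lcl$ is misplaced: the \subicat of fully faithful arrows does not depend on $\lcl$ at all, so the issue is purely about $\iCAT$ itself. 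The gap can be repaired: for instance, fully faithful functors are precisely the maps right orthogonal to $\pt\amalg\pt\to\Delta^1$ in $\iCAT$, so they form the right class of the (essentially surjective, fully faithful) factorization system, and the right class of a factorization system generated by a small set of maps in a presentable \icat spans an accessible reflective full \subicat of the arrow category; alternatively, one can simply do what the paper does and use cocompleteness of $C$ together with the $P$-morphism hypothesis on $u$ to produce $u_\sharp$, turning full faithfulness into invertibility of a counit so that \cite[Corollary~4.7.4.18]{Lurie_higher-algebra} applies. As written, however, the reflectivity step is asserted rather than proved, and it is the one point where your route does not reduce to results already on the shelf.
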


This will require some preparations, before we can rephrase the condition of being local in our preferred language of adjointable squares.

\begin{lem}
\label{sta:pbco-u-sharp}%
Let $C\in\pbco$ and $u:I^\addfinal\to P_\amalg$ a diagram in $\lcl$.
The functor $u^*$ of~(\ref{eq:u-star}) admits a left adjoint $u_\sharp$.
\end{lem}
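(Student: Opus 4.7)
The plan is to construct the left adjoint $u_\sharp$ explicitly as a colimit of componentwise left adjoints, then verify the adjunction formally by a mapping-space computation.

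First, I would observe that for any morphism $f:X\to Y$ in $P_\amalg$, the induced functor $\overline{C}(f):\overline{C}(Y)\to\overline{C}(X)$ admits a left adjoint in $\iCATco$. Indeed, decomposing $X=\coprod_is^{(i)}$ and $Y=\coprod_jt^{(j)}$ as coproducts of representables, the right-Kan-extension formula identifies $\overline{C}(X)\simeq\prod_iC(s^{(i)})$ and similarly for $Y$; under these identifications $\overline{C}(f)$ acts as $(M_j)_j\mapsto\bigl((f^{(i)})^*M_{j_i}\bigr)_i$, where the components $f^{(i)}:s^{(i)}\to t^{(j_i)}$ of $f$ are all $P$-morphisms. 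Since each $f^{(i)}$ admits $f^{(i)}_\sharp$ by \Cref{defn:pb-adjointable}\ref{item:pb-adjointable-E}, and each $C(t^{(j)})$ is cocomplete by \Cref{defn:pb-cocomplete}, a left adjoint to $\overline{C}(f)$ is given by $(N_i)_i\mapsto\bigl(\coprod_{i:\,j_i=j}f^{(i)}_\sharp N_i\bigr)_j$.

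Applying this to the structure maps $\pi_i:=u(i\to\infty)$ of the cone $u$ yields left adjoints $\pi_{i,\sharp}:\overline{C}(u_i)\to\overline{C}(u_\infty)$, and I would then set
\[
u_\sharp\bigl((N_i)_{i\in I}\bigr):=\colim_{i\in I}\pi_{i,\sharp}(N_i)\in\overline{C}(u_\infty),
\]
a colimit that exists by cocompleteness. The diagram $i\mapsto\pi_{i,\sharp}(N_i)$ is built from: (a) the factorization $\pi_i=\pi_j\circ u(\alpha)$ for $\alpha:i\to j$ in $I$, which gives $\pi_{i,\sharp}\simeq\pi_{j,\sharp}\circ u(\alpha)_\sharp$; together with (b) the coherence $N_i\simeq u(\alpha)^*N_j$, composed with the counit of $u(\alpha)_\sharp\dashv u(\alpha)^*$, yielding a structure morphism $\pi_{i,\sharp}N_i\to\pi_{j,\sharp}N_j$. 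The adjunction is then witnessed by the chain
\[
\map{u_\sharp(N)}{M}\simeq\lim_{i\in I\op}\map{\pi_{i,\sharp}N_i}{M}\simeq\lim_{i\in I\op}\map{N_i}{\pi_i^*M}\simeq\map[\lim_{I\op}\overline{C}(u|_I)]{N}{u^*M},
\]
using in succession that $\map{\pblank}{M}$ turns colimits into limits, the componentwise adjunctions $\pi_{i,\sharp}\dashv\pi_i^*$, and the standard description of mapping spaces in a limit of \icats as a limit of mapping spaces.

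The main obstacle is \icategorical coherence: turning the pointwise recipe above into a genuine functor $u_\sharp$ (not merely a map of homotopy categories) requires packaging infinitely many higher coherences that I have elided. I would handle this by organizing the system $\{(\pi_{i,\sharp},\pi_i^*)\}$ into a bifibration over $(I^\addfinal)\op$ in the style of~\cite[\S\,7.3.2]{Lurie_higher-algebra}, and extracting $u_\sharp$ formally from the coCartesian transport composed with a relative colimit. Pleasantly, no Beck-Chevalley compatibility is needed among the $\pi_{i,\sharp}$, since $u$ is only assumed to be a diagram and not required to consist of Cartesian squares.
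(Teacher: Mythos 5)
Your proposal is correct and is essentially the paper's own argument: the same formula $u_\sharp(N)\simeq\colim_{i\in I}u(i)_\sharp N_i$ (with the componentwise adjoints over $P_\amalg$ obtained exactly as you describe, via $\overline{C}(\coprod_i s^{(i)})\simeq\prod_i C(s^{(i)})$), and the same coherence device, since your ``bifibration over $(I^\addfinal)\op$ with coCartesian transport followed by a (relative) colimit'' is precisely the paper's passage through the Cartesian fibration classified by $\overline{C}\circ u$, which is also coCartesian by the fiberwise left adjoints, together with the left adjoint to the inclusion of the fiber over $\infty$. The only (minor) divergence is in how the adjunction is certified: you corepresent $\map{N}{u^*(-)}$ by a mapping-space computation, whereas the paper exhibits the counit $u_\sharp u^*\to\id$ explicitly and identifies it with the counit of the restriction/left Kan extension adjunction along $I\into I^\addfinal$.
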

\begin{proof}
By~\cite[Proposition~4.2.3.14]{Lurie_higher-topos}, we may and will assume that $I$ is an ordinary category. 
The composite $\overline{C}\circ u:(I^\addfinal)\op\to\iCAT$ classifies a Cartesian fibration $\pi:X\to I^\addfinal$.
Let $\alpha:i\to j$ be a morphism in $I^\addfinal$, and consider the associated functor on the fibers $u_\alpha^*:\overline{C}(u_j)\to\overline{C}(u_i)$.
Since $u_\alpha$ is a $P$-morphism, it follows that $u_\alpha^*$ admits a left adjoint $(u_\alpha)_\sharp$.
By~\cite[Corollary~5.2.2.5]{Lurie_higher-topos} then, $\pi$ is a coCartesian fibration too.
Since the inclusion $\infty\into I^\addfinal$ admits a left adjoint, 
we deduce from~\cite[Corollary~5.2.7.11]{Lurie_higher-topos} that the inclusion $\rho:X_\infty\into X$ admits a left adjoint $\lambda$.

Consider the Cartesian square of simplicial sets
\[
\begin{tikzcd}
X'
\ar[r, hookrightarrow]
\ar[d, "\pi'"]
&
X
\ar[d, "\pi"]
\\
I
\ar[r, hookrightarrow]
&
I^{\addfinal}
\end{tikzcd}
\]
with $\pi':X'\to I$ also a Cartesian and coCartesian fibration.
We denote by $T'$ the full \subicat of $\map[I]{I}{X'}$ of Cartesian sections of $\pi'$ which we identify with $\lim_{I\op}\overline{C}(u|_I)$~\cite[Corollary~3.3.3.2]{Lurie_higher-topos}.
Consider the evaluation map $e:T'\times I\to X'\into X$ and let $f:T'\times I\to X_\infty$ be the composite $\lambda\circ e$.
We then define $u_\sharp:T'\to X_\infty\simeq \overline{C}(u_\infty)$ as the composite
\begin{equation}
\label{eq:u-sharp}%
u_\sharp:T'\xto{f}\fun{I}{X_\infty}\xto{\colim_I}X_\infty.
\end{equation}
Similarly we let $T\subseteq \map[I^{\addfinal}]{I^{\addfinal}}{X}$ be the full \subicat spanned by Cartesian sections so that we have a trivial fibration $g:T\to X_\infty$.
Evaluation $T\times I^{\addfinal}\to X$ induces a natural transformation from $e\circ u^*$ to~$\rho\circ g$ (the latter viewed as constant in $I$).
By construction, this natural transformation must come from a natural transformation $f\circ u^*\to g$.

Composing with $\colim_I$ we obtain a natural transformation $u_\sharp\circ u^*\to \id$ which (evaluated at $x\in T$) we may describe informally as the composite of the following two counit morphisms:
\[
\colim_iu(i)_\sharp u(i)^*x\to \colim_i x\to x,
\]
where $u(i)$ is $u$ applied to the unique morphism $i\to\infty$ in $I^\addfinal$.
It follows easily from the description of mapping spaces in limit \icats that this is the counit of an adjunction $u_\sharp\dashv u^*$ as required.
\end{proof}

\begin{lem}
\label{sta:pbco-u-sharp-phi}%
Let $\phi:C\to C'$ be a morphism in $\pbco$ and let $u:I^\addfinal\to P_\amalg$ be a diagram in $\lcl$.
Then the following square is left adjointable:
\[
\begin{tikzcd}
\overline{C}(u_\infty)
\ar[r, "u^*"]
\ar[d, "\overline{\phi}_{u_\infty}"]
&
\lim_{I\op}\overline{C}\circ u|_I
\ar[d, "\lim\overline{\phi}"]
\\
\overline{C'}(u_\infty)
\ar[r, "u^*"]
&
\lim_{I\op}\overline{C'}\circ u|_I
\end{tikzcd}
\]
\end{lem}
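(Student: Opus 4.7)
The plan is to verify that the Beck-Chevalley transformation associated to the square is an equivalence, using the explicit description of $u_\sharp$ obtained in the proof of \Cref{sta:pbco-u-sharp}. Both horizontal functors admit left adjoints by that lemma (applied to $C$ and $C'$), so what remains is invertibility of the exchange.

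First, $\phi$ extends to a natural transformation $\overline{\phi}: \overline{C} \to \overline{C'}$ by functoriality of the right Kan extension described in \Cref{rmk:kan-extension}. Restriction along $u: I^\addfinal \to P_\amalg$ yields a morphism between the Cartesian fibrations classified by $\overline{C}\circ u$ and $\overline{C'}\circ u$; I call the total functor $\Phi: X \to X'$. The key property I need is that $\Phi$ preserves coCartesian edges: fiberwise $\Phi_i = \overline{\phi}_{u_i}$, and compatibility with coCartesian lifts reduces to $\overline{\phi}_{u_j}\circ u(\alpha)_\sharp \simeq u(\alpha)_\sharp\circ \overline{\phi}_{u_i}$ for each $\alpha: i \to j$ in $I^\addfinal$. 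Since $u(\alpha)$ is a $P_\amalg$-morphism, this follows from $\phi$ commuting with $p_\sharp$ for $P$-morphisms (the defining property of $\phi \in \pb$), extended to $P_\amalg$ using that $\overline{C}$ and $\overline{C'}$ convert coproducts in $\psh{S}$ to products.

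Recall that \Cref{sta:pbco-u-sharp} exhibits $u_\sharp$ as the composite $\lim_{I\op}(\overline{C}\circ u|_I) \hookrightarrow \fun{I}{X} \xto{\lambda} \fun{I}{\overline{C}(u_\infty)} \xto{\colim_I} \overline{C}(u_\infty)$, and analogously for the $C'$ side. The Beck-Chevalley transformation for the $\phi$-square accordingly decomposes into three exchange transformations, one for each step. The inclusion step gives a commutative square because pointwise application of $\Phi$ restricts to $\lim\overline{\phi}$ on the inclusion's domain. The $\lambda$ step is left adjointable because $\Phi$ commutes with $\lambda$ up to equivalence: on the fiber over $i$, $\lambda$ is $u(\alpha)_\sharp$ for the unique $\alpha: i \to \infty$, so this reduces to the compatibility established in the previous paragraph. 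The $\colim_I$ step is left adjointable because $\overline{\phi}_{u_\infty}$ preserves small colimits, which is a direct consequence of $\phi \in \pbco$.

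Composing the three equivalences yields the desired invertibility, since the Beck-Chevalley transformation of a horizontal composition of left adjointable squares is the composition of the individual exchange transformations. The main technical obstacle is the construction of $\Phi$ and the verification that it preserves coCartesian edges; once these are in place, the rest is a formal consequence of the very definitions of $\pb$ and $\pbco$.
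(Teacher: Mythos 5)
Your proposal is correct and follows essentially the same route as the paper: both exploit the composite description of $u_\sharp$ from \Cref{sta:pbco-u-sharp} (inclusion, then $\lambda$, then $\colim_I$), reduce the exchange to showing that $\overline{\phi}$ commutes with $\lambda$ --- i.e.\ with the pushforwards $u(i)_\sharp$, which follows from $\phi$ being a morphism of pullback formalisms extended to $P_\amalg$ --- and use colimit-preservation of $\phi$ for the last step. Your extra care in constructing the map of fibrations $\Phi$ and checking preservation of coCartesian edges is a slightly more structured packaging of the paper's objectwise computation, but the underlying argument is the same.
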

\begin{proof}
We have proved in \Cref{sta:pbco-u-sharp} that the two functors labeled $u^*$ admit left adjoints $u_\sharp$, and to prove that the exchange transformation is an equivalence, we take up the notation used in the proof of that result.
In view of the construction~(\ref{eq:u-sharp}) of $u_\sharp$, and given that $\phi$ preserves colimits, it suffices to show that $\phi$ commutes with $\lambda$.
Thus let us be given an object in $X$ lying over some $i\in I$ so that we may identify it with a pair $(i,x)$ where $x\in \overline{C}(u_i)$.
Then we have in $\overline{C'}(u_\infty)$:
\[
\overline{\phi}_{u_\infty}\lambda(i,x)\simeq\overline{\phi}_{u_\infty}u(i)_\sharp(x)\simeq u(i)_\sharp\overline{\phi}_{u_i}(x)\simeq\lambda\overline{\phi}_{u_i}(x),
\]
as required.
\end{proof}

\begin{rmk}
\label{rmk:pb-local-cartesian}%
Given $u:I^\addfinal\to P_\amalg$ in $\lcl$, right Kan extending and taking limits defines a functor $\on{lc}_u:\pbco\to\Sq$ (\Cref{sta:lc-functor}) which sends $C\in\pbco$ to the square $\on{lc}_u(C)$:
\[
\begin{tikzcd}
\overline{C}(u_\infty)
\ar[r, "\id"]
\ar[d, "\id" left]
&
\overline{C}(u_\infty)
\ar[d, "u^*"]
\\
\overline{C}(u_\infty)
\ar[r, "u^*"]
&
\lim_I\overline{C}(u|_I)
\end{tikzcd}
\]
It follows from \Cref{sta:pbco-u-sharp} that this square is left adjointable if and only if $C$ is local with respect to $u$.
Moreover, \Cref{sta:pbco-u-sharp-phi} implies that the \icat of $\lcl$-local pullback formalisms fits into the following Cartesian square in $\ICAT$:
\begin{equation}
\label{eq:pb-local-cartesian}%
\begin{tikzcd}
\pbl
\ar[r, hookrightarrow]
\ar[d]
&
\pbco
\ar[d, "(\on{lc}_u)_u"]
\\
\displaystyle\prod_{u\in\lcl}\SqLAd
\ar[r, hookrightarrow]
&
\displaystyle\prod_{u\in\lcl}\Sq
\end{tikzcd}
\end{equation}
\end{rmk}

\begin{lem}
\label{sta:lc-functor}%
Let $u:I^\addfinal\to P_\amalg$ in $\lcl$.
The association $C\mapsto \on{lc}_u(C)$ underlies a functor
\[
\on{lc}_u:\pbco\to\Sq.
\]
Moreover, this functor admits a left adjoint.
\end{lem}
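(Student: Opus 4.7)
The approach mirrors that of \Cref{sta:pf-functor}: I will build $\on{lc}_u$ as a composition of manifestly right-adjoint operations---forgetful functors, restrictions, and right Kan extensions---so that both the functoriality and the existence of a left adjoint follow formally from the fact that right adjoints compose.

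Concretely, the plan is to define $\on{lc}_u$ as the composite
\[
\pbco \into \pb \into \fun{S\op}{\calg{\iCAT}} \to \fun{S\op}{\iCAT} \xto{\overline{(\blank)}} \fun{\psh{S}\op}{\iCAT} \xto{u^*} \fun{(I^\addfinal)\op}{\iCAT} \xto{\pi_*} \fun{\Delta^1}{\iCAT} \xto{\sigma^*} \Sq,
\]
where $\pi:(I^\addfinal)\op\to\Delta^1$ is the functor collapsing $\{\infty\}$ to $0$ and $I\op$ to $1$, and $\sigma:\Delta^1\times\Delta^1\to\Delta^1$ is the meet $(i,j)\mapsto\min(i,j)$. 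Each arrow in the chain admits a left adjoint: the two inclusions by \Cref{sta:pb-presentable,sta:pbco-presentable}; postcomposition with the forgetful $\calg{\iCAT}\to\iCAT$ by \Cref{rmk:calg-cat} (applied pointwise); the right Kan extension $\overline{(\blank)}$ by \Cref{rmk:kan-extension}; and the two restrictions $u^*$, $\sigma^*$ together with the right Kan extension $\pi_*$ by the standard adjunctions with (left) Kan extensions and restriction, respectively.

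Next I would verify that the composite really evaluates to the square $\on{lc}_u(C)$. Up through $u^*$ one obtains the diagram $i\mapsto\overline{C}(u_i)$ on $(I^\addfinal)\op$. Since $\infty$ is initial in $(I^\addfinal)\op$, the right Kan extension $\pi_*$ of this diagram evaluates at $0\in\Delta^1$ to $\overline{C}(u_\infty)$ (the comma category over $0$ being all of $(I^\addfinal)\op$, whose limit reduces to $\overline{C}(u_\infty)$) and at $1\in\Delta^1$ to $\lim_{I\op}\overline{C}(u|_I)$ (the comma category over $1$ being $I\op$), with transition map the canonical comparison morphism $u^*$. Finally, $\sigma$ sends $(0,0),(0,1),(1,0)$ to $0$ and $(1,1)$ to $1$, so $\sigma^*$ converts this arrow into exactly the square with $\overline{C}(u_\infty)$ at the top-left, top-right, and bottom-left (connected by identity maps) and $\lim_{I\op}\overline{C}(u|_I)$ at the bottom-right (with the comparison $u^*$ on the right and bottom), matching $\on{lc}_u(C)$ as defined in \Cref{rmk:pb-local-cartesian}.

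The argument is entirely formal once the pipeline is set up; there is no genuine obstacle. The only slightly delicate point is the identification of the right Kan extension along $\pi$, but this is immediate from initiality of $\infty$ in $(I^\addfinal)\op$.
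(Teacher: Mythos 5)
Your proposal is correct and follows essentially the same strategy as the paper's proof: write $\on{lc}_u$ as a pipeline consisting of the forgetful functor, the right Kan extension $\overline{\pblank}$ along the Yoneda embedding, and further restrictions and right Kan extensions, each of which admits a left adjoint, so the composite does too. The only difference is in the final assembly of the square: the paper restricts along a functor $\Delta^1\times\Delta^1\times I\op\to(I^\addfinal)\op$ and then right Kan extends along the projection to $\Delta^1\times\Delta^1$, whereas you right Kan extend along the collapse $(I^\addfinal)\op\to\Delta^1$ and then restrict along $\min:\Delta^1\times\Delta^1\to\Delta^1$, using (correctly, and with the standard cofinality fact) that the limit over the cone $(I^\addfinal)\op$ is the value at its initial vertex $\infty$, so that the resulting square is literally the one of the remark defining $\on{lc}_u(C)$.
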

\begin{proof}
Consider the composite
\begin{equation}
\label{eq:lc-1}
\pbco\to\fun{S\op}{\iCAT}\xto{\overline{\pblank}}\fun{\psh{S}\op}{\iCAT}\xto{\circ u}\fun{(I^\addfinal)\op}{\iCAT},
\end{equation}
where the first functor forgets the symmetric monoidal structure and embeds $\iCATco$ into $\iCAT$.
We may further restrict along the functor $\iota:\Delta^1\times\Delta^1\times I\op\to (I^\addfinal)\op$ which at $i\in I$ picks out the following square in $(I^\addfinal)\op$:
\[
\begin{tikzcd}
\infty
\ar[r]
\ar[d]
&
\infty
\ar[d]
\\
\infty
\ar[r]
&
i
\end{tikzcd}
\]
We then continue~(\ref{eq:lc-1}):
\begin{equation}
\label{eq:lc-2}
\fun{(I^\addfinal)\op}{\iCAT}\xto{\iota^*}\fun{\Delta^1\times\Delta^1\times I\op}{\iCAT}\xto{\lim_{I\op}}\fun{\Delta^1\times\Delta^1}{\iCAT}\simeq\Sq.
\end{equation}
Combining~(\ref{eq:lc-1}) and (\ref{eq:lc-2}) yields the functor $\on{lc}_u$.

We already saw in the proof of \Cref{sta:pbco-presentable} that the first functor in~(\ref{eq:lc-1}) is a right adjoint.
The remaining functors in~(\ref{eq:lc-1}) and~(\ref{eq:lc-2}) are either restriction or right Kan extension functors hence are right adjoints.
\end{proof}

\begin{proof}[Proof of \Cref{sta:pbl-presentable}]
Starting with the Cartesian square~(\ref{eq:pb-local-cartesian}), and the fact that $\pbco$ is presentable (\Cref{sta:pbco-presentable}), the proof is completely analogous to the one of \Cref{sta:pb-presentable}.
As there, we reduce to prove that the functor $\on{lc}_u:\pbco\to\iCAT$ is a right adjoint.
This is \Cref{sta:lc-functor}.
\end{proof}

\begin{rmk}
\label{rmk:pb-local-explicit}%
Fix $C\in\pbco$ and $u:I^\addfinal\to P_\amalg$ in $\lcl$.
As seen in the proof of \Cref{sta:pbco-u-sharp}, the condition that $C$ be local with respect to $u$ is the condition that the morphism
\begin{equation}
\label{eq:local-explicit}
\colim_{i}u(i)_\sharp u(i)^*M\to M
\end{equation}
be an equivalence for all $M\in C(u_\infty)$, where $u(i):u_i\to u_\infty$ denotes the morphism induced by the unique $i\to\infty$.
Moreover, if each $u(i)^*$ admits a right adjoint $u(i)_*$, as is the case for $C\in\pbpr$, then this condition is also equivalent to the canonical morphism
\begin{equation}
\label{eq:local-explicit-star}
M\to \lim_{i}u(i)_* u(i)^*M
\end{equation}
being an equivalence.
\end{rmk}

We end this subsection with some important examples of local conditions $\lcl$.
\begin{exa}
\label{exa:local-interval}%
Let $\ivl\in S$ be an object such that the unique morphism $\pi_\ivl:\ivl\to\1{S}$ is in $P$.
We let $\lcl_{\ivl}$ denote the set of diagrams (in each case $I=\Delta^0$)
\[
\lcl_\ivl\coloneqq\{\pi_\ivl:\ivl\times s\to s\mid s\in S\}.
\]
A cocomplete pullback formalism $C$ is then $\lcl_\ivl$-local if and only if the functors
\[
\pi_a^*:C(s)\to C(\ivl\times s)
\]
are fully faithful for each $s\in S$.
In other words, $C$ is $\lcl_\ivl$-local if and only if it satisfies non-effective $\ivl$-invariance.
\end{exa}

\begin{exa}
\label{exa:local-descent}%
Let $\tau$ be a Grothendieck topology on $S$ for which the covers are $P$-morphisms.
We let $\lcl_{\tau}$ denote the set of diagrams
\[
\lcl_\tau\coloneqq \{s_\bullet\to s\ \text{ \v{C}ech semi-nerve associated to a $\tau$-cover of }s, s\in S\}
\]
Here, if $s_\bullet\to s$ is the augmented \v{C}ech nerve associated to a $\tau$-cover of $s$, its semi-nerve is the restriction to the subsimplicial set $\sscat^+\subset \scat^+$ of injective maps.

A cocomplete pullback formalism $C$ is then $\lcl_\tau$-local if and only if the functor
\[
C(s)\to\lim_{[n]\in\sscat}\overline{C}(s_n)
\]
in $\iCAT$ is fully faithful, for each $\tau$-cover in $S$.
In other words, $C$ is $\lcl_\tau$-local if and only if it satisfies non-effective $\tau$-descent.
\end{exa}

\begin{exa}
\label{exa:local-hyperdescent}%
Assume as in \Cref{exa:local-descent} that $\tau$ is a Grothendieck topology on $S$ for which the covers are $P$-morphisms.
Assume moreover that $(S,\tau)$ is a Verdier site in the sense of~\cite[Definition~9.1]{Dugger-Hollander-Isaksen_hypercovers-and-simplicial} satisfying the conditions~(1--3) of~\cite[\S\,10]{Dugger-Hollander-Isaksen_hypercovers-and-simplicial} for some regular cardinal $\lambda$.
We let $\lcl_{\hat{\tau}}$ denote the set of diagrams
\begin{multline*}
\lcl_{\hat{\tau}}\coloneqq\{s_\bullet\to s\ \text{ `semi' internal }\tau\text{-hypercover}\}\ \cup\\
\{\amalg s^{(i)}\to\cup s^{(i)}\mid (s^{(i)})_i\text{ collection of objects in $S$ of size }<\lambda\}
\end{multline*}
A cocomplete pullback formalism $C$ is then $\lcl_{\hat{\tau}}$-local if and only if it satisfies non-effective $\tau$-hyperdescent.
This follows from~\cite[Theorem~10.2]{Dugger-Hollander-Isaksen_hypercovers-and-simplicial}.
\end{exa}

\subsection{Localization}
\label{sec:localization}
Under reasonable assumptions on $\lcl$ we are now going to describe the left adjoint of \Cref{sta:pbl-presentable} when restricted to presentable pullback formalisms.
This will allow us, in particular, to describe the initial object of $\pbl$.
Denote by $\pblpr$ the full \subicat of $\pbl$ spanned by those local pullback formalisms which are in addition presentable.

\begin{notn}
\label{notn:lcl-assumptions}%
For the rest of the section, we make the following assumptions on~$\lcl$:
\begin{enumerate}[(1)]
\item
\label{item:infty}
For each $u\in\lcl$, we have $u_\infty\in S$.
\item
\label{item:lcl-functoriality}
For each $u\in\lcl$ and $f:t\to u_\infty$ in $S$, the base change $f^*u:I^\addfinal\to P_\amalg$ along~$f$ belongs to~$\lcl$. (Of course, here $f^*u$ is the diagram $i\mapsto u_i\times_{u_\infty} t$.)
\item
\label{item:lcl-small}
For each $s\in S$, denote by $K_s$ the (possibly large) set of morphisms in $\pbgmco(s)$:
\[
K_s\coloneqq \{u_\sharp u^*M\xto{\epsilon} M\mid u\in\lcl, u_\infty=s, M\in\pbgmco(s)\}
\]
We assume that the class of $K_s$-equivalences is of small generation in the sense of~\cite[Remark~5.5.4.7]{Lurie_higher-topos}.
\end{enumerate}
\end{notn}
Of course, we here use \Cref{sta:pbco-u-sharp} which asserts the existence of $u_\sharp$ and the counit morphism $\epsilon:u_\sharp u^*\to\id$.
(For presentable pullback formalisms, such as $\pbgmco$, the proof of \Cref{sta:pbco-u-sharp} is in fact easier.)
Condition~\ref{item:infty} is not so important.
We impose it mainly to simplify the notation in the sequel.
\begin{exa}
\label{exa:lcl-assumptions}%
Each set of diagrams considered in \Cref{exa:local-interval,exa:local-descent,exa:local-hyperdescent} satisfies the assumptions of \Cref{notn:lcl-assumptions}.
\end{exa}

\begin{notn}
\label{notn:localization-set}%
With the assumptions of \Cref{notn:lcl-assumptions}, let $s\in S$, and $C\in\pbpr$.
We then define a (possibly large) set of morphisms in $C(s)$:
\[
K_{C(s)}:=\bigcup_{p:s'\to s\in P}p_\sharp\{ u_\sharp u^*N\xto{\epsilon}  N\mid u\in \lcl, u_\infty=s', N\in C(s')\}
\]
\end{notn}

\begin{lem}
\label{sta:localization-pointwise-exists}%
The class of $K_{C(s)}$-equivalences is of small generation, for each $s\in S$.
In particular, the localization $L_{C(s)}:C(s)\to C(s)$ with respect to $K_{C(s)}$ exists and has presentable image.
\end{lem}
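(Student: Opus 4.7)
The plan is to first show that $\overline{K_{C(s)}}$ is of small generation in $C(s)$, after which the existence of the reflective localization $L_{C(s)}$ and presentability of its image follow from the standard criterion~\cite[Proposition~5.5.4.15]{Lurie_higher-topos}. Writing $K_{C(s)}=\bigcup_{p:s'\to s\in P}p_\sharp K_{C(s'),\infty}$ with $K_{C(s'),\infty}:=\{u_\sharp u^*N\to N\mid u\in\lcl,\ u_\infty=s',\ N\in C(s')\}$, the set of $p$'s with codomain $s$ is small since $S$ is small, and each $p_\sharp$ is colimit-preserving (so $p_\sharp(\overline{A})\subseteq\overline{p_\sharp A}$ for any class $A$). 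Hence it suffices to exhibit a small generating set for $\overline{K_{C(s'),\infty}}$ for each $s'\in S$.

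The first reduction replaces the parameter $N$ by a small set $\mathcal{G}\subseteq C(s')$ of generators (available since $C(s')\in\PrL$). For each fixed $u\in\lcl$ with $u_\infty=s'$, the endofunctor $u_\sharp u^*$ of $C(s')$ is colimit-preserving, as the description $u_\sharp u^*\simeq\colim_{i\in I}u(i)_\sharp u(i)^*$ of~\Cref{rmk:pb-local-explicit} manifests. Hence the counit $\epsilon_u:u_\sharp u^*\to\id$ is a natural transformation of colimit-preserving endofunctors, and the class $\{(\epsilon_u)_N\mid N\in C(s')\}$ is saturated-generated by its restriction $\{(\epsilon_u)_G\mid G\in\mathcal{G}\}$. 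This yields $\overline{K_{C(s'),\infty}}=\overline{\{(\epsilon_u)_G\mid u\in\lcl_{s'},\ G\in\mathcal{G}\}}$, writing $\lcl_{s'}:=\{u\in\lcl\mid u_\infty=s'\}$.

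The second reduction replaces the (potentially large) set $\lcl_{s'}$ by a small subset, leveraging the hypothesis that $K_{s'}\subseteq\pbgmco(s')=\psh{P_{s'}}$ is already of small generation~(\Cref{notn:lcl-assumptions}\ref{item:lcl-small}). For each $G\in\mathcal{G}$ I would introduce the colimit-preserving functor
\[
F_G:\psh{P_{s'}}\to C(s'),\quad M\mapsto\phi_{s'}(M)\otimes G,
\]
where $\phi:\pbgmco\to C$ is the unique morphism of presentable pullback formalisms provided by initiality of $\pbgmco$ (\Cref{sta:pbco-init}). The crucial property is that $F_G$ intertwines the counits $\epsilon_u$, meaning $F_G((\epsilon_u)_M)\simeq(\epsilon_u)_{F_G(M)}$; in particular $(\epsilon_u)_G$ is the image under $F_G$ of the morphism $u_\sharp u^*\1{\pbgmco(s')}\to\1{\pbgmco(s')}$ of $K_{s'}$. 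Picking a small generating subset $K^0_{s'}\subseteq K_{s'}$ and using $F_G(\overline{A})\subseteq\overline{F_G(A)}$, the small set $\bigcup_{G\in\mathcal{G}}F_G(K^0_{s'})$ contains $(\epsilon_u)_G$ in its saturated closure for all $u\in\lcl_{s'}$ and $G\in\mathcal{G}$. Combined with the first reduction and the $p_\sharp$-step, this furnishes a small generating set for $\overline{K_{C(s)}}$.

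The main obstacle is the intertwining property $F_G((\epsilon_u)_M)\simeq(\epsilon_u)_{F_G(M)}$. Unwinding via~\Cref{rmk:pb-local-explicit}, this reduces to showing for each $i\in I$ that $u(i)_\sharp u(i)^*(\phi M\otimes G)\simeq u(i)_\sharp u(i)^*\phi M\otimes G$ naturally; this follows from one application of the $P$-projection formula, rewriting $u(i)^*(\phi M\otimes G)\simeq u(i)^*\phi M\otimes u(i)^*G$ using symmetric monoidality of $u(i)^*$ and then invoking the projection formula with $X=G$ and $Y=u(i)^*\phi M$. Colimit-preservation of $\otimes$ in each variable, of $\phi$, and of the individual $u(i)_\sharp$, then assembles these pointwise equivalences into the required global statement, and compatibility of the counits $u(i)_\sharp u(i)^*\to\id$ with the colimit defining $\epsilon_u$ takes care of naturality.
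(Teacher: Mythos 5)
Your proof is correct and follows essentially the same route as the paper: the candidate small set $\bigcup_{p}p_\sharp\bigl(\bigcup_{G}F_G(K^0_{s'})\bigr)$ is exactly the paper's $K=\bigcup_{p}p_\sharp(\morgm{K_{s'}^{(0)}}\otimes C(s')^{(0)})$, and the key step in both is the projection-formula intertwining of the counit $u_\sharp u^*\to\id$ with $\morgm{\blank}\otimes G$, combined with colimit-preservation of all functors involved to pass to strong saturations. The only cosmetic difference is that you read \Cref{notn:lcl-assumptions}\ref{item:lcl-small} as providing $K^0_{s'}\subseteq K_{s'}$; if the small generating set merely consists of $K_{s'}$-equivalences, the same colimit-preservation argument (as in the paper) still shows its $F_G$-image consists of $K_{C(s)}$-equivalences, so nothing is lost.
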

\begin{proof}
For each $s'\in S$, let $C(s')^{(0)}\subseteq C(s')$ be a small \icat which generates $C(s')$ under small colimits.
(Here, we use that $C$ is presentable, of course.)
Using~\ref{item:lcl-small} of \Cref{notn:lcl-assumptions}, we may also find a small set $K_{s'}^{(0)}$ of morphisms in $\pbgmco(s')$ which generates the $K_{s'}$-equivalences.
The set
\[
K=\bigcup_{p:s'\to s\in P}p_\sharp(\morgm{K_{s'}^{(0)}}\otimes C(s')^{(0)})
\]
is then small, and it suffices to prove that it generates the $K_{C(s)}$-equivalences as a strongly saturated set of morphisms.
(Here, we denote by $\morgm{\blank}:\pbgmco\to C$ the essentially unique morphism of \Cref{sta:pbco-init}.)

In one direction, if $u_\sharp u^*M\to M\in K_{s'}$, $p:s'\to s\in P$, and $N\in C(s')$, then the morphism
\begin{align}
  \notag{}  p_\sharp(\morgm{u_\sharp u^*M}\otimes N)&\xto{\epsilon} p_\sharp(\morgm{M}\otimes N)
                                                \shortintertext{is homotopic to}
                                                \label{eq:generating-equiv}%
                                                p_\sharp u_\sharp u^*(\morgm{M}\otimes N)&\xto{\epsilon} p_\sharp(\morgm{M}\otimes N)
\end{align}
and therefore a $K_{C(s)}$-equivalence.
As all functors in sight preserve colimits, it follows that $K$ consists of $K_{C(s)}$-equivalences.

Conversely, setting $M=\1{s'}$ in~(\ref{eq:generating-equiv}), we obtain the general element of $K_{C(s)}$.
As all functors in sight preserve colimits, it follows that also $K_{C(s)}$ consists of $K$-equivalences.
\end{proof}

\begin{lem}
\label{sta:localization-functorial}%
Let $\alpha\in K_{C(s)}$ for some $s\in S$.
\begin{enumerate}[(a)]
\item If $f:t\to s$ is a morphism in $S$, then $f^*(\alpha)$ is a $K_{C(t)}$-equivalence.
\item If $M\in C(s)$ is an arbitrary object, then $\alpha\otimes\id_M$ is a $K_{C(s)}$-equivalence.
\end{enumerate}
\end{lem}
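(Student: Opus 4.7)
A generic element of $K_{C(s)}$ has the form $\alpha = p_\sharp(\epsilon_N)$, where $p : s' \to s$ lies in $P$, $u \in \lcl$ with $u_\infty = s'$, $N \in C(s')$, and $\epsilon_N : u_\sharp u^* N \to N$ is the counit morphism supplied by \Cref{sta:pbco-u-sharp}. The plan is to handle the two statements independently by pushing $f^*$ (resp.\ $\pblank\otimes M$) past $p_\sharp$ using base change (resp.\ the projection formula) and then past $u_\sharp u^*$ using the colimit description of this composite.

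For part~(1), pick a morphism $f : t \to s$ in $S$ and form the Cartesian square with base change $p' : s' \times_s t \to t$ in $P$ and $f' : s' \times_s t \to s'$. By $P$-base change in $C$,
\begin{equation*}
f^*\alpha \;=\; f^* p_\sharp(\epsilon_N) \;\simeq\; p'_\sharp\bigl((f')^*\epsilon_N\bigr).
\end{equation*}
The main point is now to identify $(f')^*\epsilon_N$ with the counit morphism $\epsilon_{(f')^*N}$ associated to the base-changed diagram $v := (f')^*u : I^\addfinal \to P_\amalg$. By \Cref{rmk:pb-local-explicit} one has $u_\sharp u^* N \simeq \colim_i u(i)_\sharp u(i)^* N$, where each $u(i) : u_i \to u_\infty$ is a morphism in $P_\amalg$ (\Cref{sta:pbco-u-sharp}). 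Applying $(f')^*$, using that it preserves small colimits and is symmetric monoidal, and using $P$-base change (for each component of each $u(i)$) together with the canonical identification $(f')^*u_\infty = v_\infty$, yields
\begin{equation*}
(f')^* \bigl(u_\sharp u^* N\bigr) \;\simeq\; \colim_i v(i)_\sharp v(i)^* (f')^* N \;\simeq\; v_\sharp v^* (f')^* N,
\end{equation*}
naturally in $N$. Hence $(f')^* \epsilon_N \simeq \epsilon_{(f')^*N}$. By hypothesis~\ref{item:lcl-functoriality} of \Cref{notn:lcl-assumptions}, $v \in \lcl$, so $p'_\sharp(\epsilon_{(f')^*N}) \in K_{C(t)}$, proving~(1).

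For part~(2), let $M \in C(s)$. The generalized projection formula (\Cref{generalized-projection-formula}) for $p \in P$ gives
\begin{equation*}
\alpha \otimes \id_M \;=\; p_\sharp(\epsilon_N) \otimes \id_M \;\simeq\; p_\sharp\bigl(\epsilon_N \otimes \id_{p^*M}\bigr).
\end{equation*}
I then repeat the same colimit argument as in~(1), now tensoring: using cocontinuity of $\otimes$ in each variable, symmetric monoidality of $u^*$, and the projection formula applied componentwise to each $u(i)$, one obtains a natural equivalence
\begin{equation*}
\bigl(u_\sharp u^* N\bigr) \otimes p^* M \;\simeq\; u_\sharp u^* \bigl(N \otimes p^* M\bigr),
\end{equation*}
identifying $\epsilon_N \otimes \id_{p^*M}$ with $\epsilon_{N \otimes p^*M}$. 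Writing $N' := N \otimes p^* M \in C(s')$ we conclude that $\alpha \otimes \id_M \simeq p_\sharp(\epsilon_{N'})$, which is again a generator of $K_{C(s)}$.

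The one nontrivial technical point—and the step I expect to be the real obstacle—is verifying the naturality square identifying $(f')^*\epsilon_N$ with $\epsilon_{(f')^*N}$ (and similarly for~(2)). Because the counit $\epsilon$ comes not from a single adjunction in $C$ but from the left adjoint constructed in~\eqref{eq:u-sharp} via a colimit over $I$ of componentwise counits, one must check that the Beck--Chevalley equivalences for the individual $P$-morphisms $u(i)^{(j)}$ assemble coherently to give the advertised equivalence of counit morphisms. Once this coherence is in place, both claims reduce to the bookkeeping indicated above.
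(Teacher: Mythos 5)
Your proof takes the same route as the paper: base change (resp.\ the projection formula) to move $f^*$ (resp.\ $\otimes\,M$) past $p_\sharp$, and then a further identification to show that the resulting morphism is again of the generating form. The paper simply asserts these identifications outright; you fill in the colimit-level bookkeeping via the formula $u_\sharp u^* \simeq \colim_i u(i)_\sharp u(i)^*$ from \Cref{rmk:pb-local-explicit}, which is the right thing to do.

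Your self-flagged concern at the end is worth addressing, though. You hesitate over whether the termwise Beck--Chevalley equivalences cohere into the colimit diagram to give $(f')^*\epsilon_N \simeq \epsilon_{(f')^*N}$ as maps, not merely an equivalence of their sources. This is indeed the only non-formal step, and the paper treats it as routine. The coherence comes for free once you observe that the counit $\epsilon$ in~\eqref{eq:u-sharp-counit} is obtained as the Beck--Chevalley transformation of a square of right adjoints, and that $(f')^*$ preserves all the structure used to build the coCartesian fibration $\pi_X : X \to I^\addfinal$ in the proof of \Cref{sta:pbco-u-sharp}: on $P_\amalg$-fibers it is just pullback, and the base change axiom for $C$ (applied $I$-coherently, which is automatic since the base change squares for the $u(i)$ assemble into a morphism of coCartesian fibrations over $I^\addfinal$) identifies the fibration over $t'$ obtained by restricting $(f')^*\circ\overline{C}\circ u$ with the one built from $\overline{C}\circ v$. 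This is why the paper can compress the argument into a single "is homotopic to." So there is no genuine gap, but you should either spell this out or cite the relevant naturality directly, rather than leaving it as an expected obstacle.
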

\begin{proof}
Suppose $\alpha$ is the morphism $\epsilon:p_\sharp u_\sharp u^*N\to p_\sharp N$ for some $p:s'\to s\in P$, $u\in\lcl$, $u_\infty=s'$, and $N\in C(s')$.
Consider the Cartesian square
\[
\begin{tikzcd}
s
&
s'
\ar[l, "p" above]
\\
t
\ar[u, "f" left]
&
t'
\ar[l, "p'" above]
\ar[u, "f'" right]
\end{tikzcd}
\]
Then $f^*(\alpha)$ is homotopic to
\[
\epsilon:p'_\sharp (f^{\prime,*}u)_\sharp (f^{\prime,*}u)^*f^{\prime,*}N\to p'_\sharp f^{\prime,*}N.
\]
By \Cref{notn:lcl-assumptions}.\ref{item:lcl-functoriality}, $f^{\prime,*}u\in\lcl$, hence $f^*(\alpha)$ is a $K_{C(t)}$-equivalence.

Similarly, $\alpha\otimes\id_M$ is homotopic to
\[
\epsilon:p_\sharp u_\sharp u^*(N\otimes p^* M)\to p_\sharp (N\otimes p^* M)
\]
and therefore a $K_{C(s)}$-equivalence.
\end{proof}

\begin{prop}
\label{sta:pb-localization-exist}%
Let $C\in\pbpr$.
There exists a morphism $C\to L_\lcl C\in\pbpr$ where $L_\lcl C(s)$ is the localization of $C(s)$ with respect to $K_{C(s)}$, for each $s\in S$.
Moreover, $L_\lcl C$ is $\lcl$-local.
\end{prop}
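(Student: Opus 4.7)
The plan is to produce $\Llcl C$ by performing the pointwise localizations $L_{C(s)}:C(s)\to\Llcl C(s)$ of \Cref{sta:localization-pointwise-exists} and assembling them, essentially uniquely, into an $S\op$-indexed diagram of presentably symmetric monoidal \icats together with a morphism $L:C\to\Llcl C$. The two required compatibilities come from \Cref{sta:localization-functorial}: part~(2) asserts that tensoring preserves $K_{C(s)}$-equivalences, so that by the standard criterion for symmetric monoidal Bousfield localization (\cite[\S\,4.1.7]{Lurie_higher-algebra}) each $L_{C(s)}$ upgrades to a presentably symmetric monoidal left adjoint; part~(1) asserts that $f^*$ preserves $K$-equivalences for every $f:t\to s$ in $S$, so that $f^*$ descends, uniquely up to equivalence, to a colimit-preserving symmetric monoidal $\overline f^*:\Llcl C(s)\to\Llcl C(t)$ intertwining the $L$'s. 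To organize these pointwise data coherently I would work on the opposite $S\op$-monoidal \icat $\tilde\cC^\boxtimes\to S^{\opname,\amalg}$ of \Cref{pb-formalism-dual-encoding-informal} and perform a fibrewise symmetric monoidal Bousfield localization at $\bigcup_s K_{C(s)}$; the two fibrewise compatibilities above ensure that the local objects form a sub-$S\op$-monoidal \icat and that the fibrewise reflector is $S\op$-monoidal, yielding $\Llcl C\in\fun{S\op}{\calg{\PrL}}$ together with the desired morphism $L$.

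For the $P$-adjointability of $\Llcl C$, observe that $p_\sharp:C(s')\to C(s)$ preserves $K$-equivalences for every $p\in P$: a generator $q_\sharp(u_\sharp u^*N\to N)\in K_{C(s')}$ with $q\in P$ into $s'$ is sent by $p_\sharp$ to $(pq)_\sharp(u_\sharp u^*N\to N)$, which lies in $K_{C(s)}$ since $pq\in P$. Hence $p_\sharp$ descends to $\overline p_\sharp:\Llcl C(s')\to\Llcl C(s)$ with $\overline p_\sharp\circ L_{C(s')}\simeq L_{C(s)}\circ p_\sharp$, left adjoint to $\overline p^*$ by composition of adjunctions. $P$-base change and the $P$-projection formula for $\Llcl C$ then follow by applying $L_{C(s)}$ to the corresponding exchange equivalences for $C$ and using those same intertwining identities together with the symmetric monoidality of $L_{C(s)}$; the identities also show that $L:C\to\Llcl C$ is a morphism in $\pbpr$.

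Finally, $\lcl$-locality of $\Llcl C$ follows from \Cref{rmk:pb-local-explicit}: for $u\in\lcl$ (so $u_\infty\in S$ by \Cref{notn:lcl-assumptions}\ref{item:infty}) and $M=L_{C(u_\infty)}\tilde M\in\Llcl C(u_\infty)$, the canonical morphism $\colim_i\overline u(i)_\sharp\overline u(i)^*M\to M$ is identified, using the above intertwinings and the fact that $L_{C(u_\infty)}$ preserves colimits, with $L_{C(u_\infty)}(\epsilon)$, where $\epsilon:u_\sharp u^*\tilde M\to\tilde M$ is the counit in $C(u_\infty)$; this counit belongs to $K_{C(u_\infty)}$ by taking $p=\id_{u_\infty}\in P$ (valid since $P$ contains isomorphisms), and is therefore inverted by $L_{C(u_\infty)}$. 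The main obstacle is the coherent assembly of the first paragraph: producing the pointwise localizations and pointwise descended $f^*$'s is routine, but organizing them into a symmetric monoidal natural transformation in $\fun{S\op}{\calg{\PrL}}$ demands the fibrewise viewpoint on $\tilde\cC^\boxtimes$ in order to enforce all the higher coherences simultaneously.
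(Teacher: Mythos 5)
Your proposal follows essentially the same path as the paper's proof: invoke \Cref{sta:localization-pointwise-exists} for the pointwise localizations, use \Cref{sta:localization-functorial} to check compatibility with tensoring and with the transition functors $f^*$, assemble the family coherently in the fibered setting using Lurie's compatible-localization machinery, then descend $p_\sharp$ for $p\in P$ (using that $P$ is a subcategory so $pq\in P$) and transport base change and the projection formula. Your $\lcl$-locality argument via \Cref{rmk:pb-local-explicit} with $p=\id$ is the same in substance as the paper's appeal to \Cref{rmk:pb-local-cartesian}. The one inaccuracy is your decision to carry out the localization in the \emph{opposite} $S\op$-monoidal \icat $\tilde\cC^\boxtimes$: there the fibers are $C(s)\op$, so a Bousfield (reflective) localization fibrewise would correspond to a \emph{coreflective} localization of each $C(s)$, whereas you need the reflective ones with left adjoints $L_{C(s)}$. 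The natural home for this argument is $\cC^\boxtimes$ itself, which is what the paper uses (\textsl{via} \cite[Definition~2.2.1.6 and Proposition~2.2.1.9]{Lurie_higher-algebra}, the fibered analogue of the \S\,4.1.7 criterion you cite). With that correction, the argument is correct and coincides with the paper's.
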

\begin{proof}
The existence of the localization $L_{K(s)}:C(s)\to L_\lcl C(s)$ in $\PrL$, for each $s\in S$, follows from \Cref{sta:localization-pointwise-exists}.
Translating to the language of $S^{\opname}$-monoidal \icats, it follows from \Cref{sta:localization-functorial} that the family $(L_{K(s)})_s$ is compatible with the $S^{\opname}$-monoidal structure, in the sense of~\cite[Definition~2.2.1.6]{Lurie_higher-algebra}.
The existence of $L_K:C\to L_\lcl C\in\fun{S\op}{\calg{\PrL}}$ then follows from~\cite[Proposition~2.2.1.9]{Lurie_higher-algebra}, and it remains to prove that it lies in $\pbpr$.
Let $q:s\to t\in P$ and $q_\sharp:C(s)\to C(t)$ the corresponding left adjoint.
Up to homotopy, $q_\sharp$ sends $K_{C(s)}$ to $K_{C(t)}$.
By the universal property of localizations, it induces a functor $q_\sharp:L_\lcl C(s)\to L_\lcl C(t)$, which is automatically left adjoint to $q^*:L_\lcl C(t)\to L_\lcl C(s)$.
The base change property and the projection formula for $L_\lcl C$ then follow directly from the same properties for $C$, and $L_K:C\to L_\lcl C$ clearly commutes with $q_\sharp$.

The second statement is clear from \Cref{rmk:pb-local-cartesian}.
\end{proof}

\begin{prop}
\label{sta:pb-pr-localization}%
The left adjoint of \Cref{sta:pbl-presentable} fits into an essentially commutative square
\[
\begin{tikzcd}
\pbco
\ar[r]
&
\pbl
\\
\pbpr
\ar[r, "L_\lcl"]
\ar[u, hookrightarrow]
&
\pblpr
\ar[u, hookrightarrow]
\end{tikzcd}
\]
where the vertical arrows are the canonical fully faithful inclusions.
\end{prop}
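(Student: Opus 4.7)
The plan is to show that, for every $C \in \pbpr$, the morphism $\eta_C: C \to L_\lcl C$ constructed in \Cref{sta:pb-localization-exist} realizes the value at $C$ of the left adjoint to $\pbl \hookrightarrow \pbco$ from \Cref{sta:pbl-presentable}. Since $L_\lcl C$ already lies in $\pblpr$ by \Cref{sta:pb-localization-exist}, this would simultaneously establish that the top left adjoint sends $\pbpr$ into $\pblpr$ and that its restriction agrees with $L_\lcl$, giving the asserted essentially commutative square.

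By the uniqueness of reflective localizations, it suffices to prove that for every $D \in \pbl$ the precomposition with $\eta_C$ induces an equivalence
\[
\eta_C^*: \map[\pbco]{L_\lcl C}{D} \xrightarrow{\sim} \map[\pbco]{C}{D}.
\]
I would then use the pointwise universal property of the localizations $L_{C(s)}: C(s) \to L_\lcl C(s)$ in $\calg{\PrL}$, upgraded to a morphism in $\fun{S\op}{\calg{\PrL}}$ via the symmetric-monoidal and base-change compatibilities provided by \Cref{sta:pb-localization-exist,sta:localization-functorial} (together with~\cite[Proposition~2.2.1.9]{Lurie_higher-algebra}), to identify the source of $\eta_C^*$ with the subspace of $\map[\pbco]{C}{D}$ on those $\phi$ whose components $\phi_s$ invert every element of $K_{C(s)}$ for each $s \in S$. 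The problem thereby reduces to showing that every $\phi: C \to D$ in $\pbco$ with $D \in \pbl$ automatically has this inversion property.

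For a typical generator $\alpha = p_\sharp(\epsilon: u_\sharp u^* N \to N)$ of $K_{C(s)}$, with $p: s' \to s$ in $P$, $u \in \lcl$ satisfying $u_\infty = s'$, and $N \in C(s')$, the $\pb$-compatibility $\phi_s \circ p_\sharp \simeq p_\sharp \circ \phi_{s'}$ reduces the problem to showing that $\phi_{s'}(\epsilon)$ is an equivalence. Presenting $\epsilon$ as the colimit $\colim_i(u(i)_\sharp u(i)^* N \to N)$ of pointwise counits via \Cref{rmk:pb-local-explicit}, and using that $\phi$ is cocontinuous and commutes with each $u(i)_\sharp$ and $u(i)^*$, one sees that $\phi_{s'}(\epsilon)$ is identified with the corresponding counit $u_\sharp u^* \phi_{s'}(N) \to \phi_{s'}(N)$ in $D$, which is an equivalence by the $\lcl$-locality of $D$ (again via \Cref{rmk:pb-local-explicit}).

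The main obstacle is the pointwise-to-global transfer in the second paragraph: one must lift the universal properties of the individual localizations $L_{C(s)}$ to a universal property of $\eta_C$ in the non-full subcategory $\pbco \subseteq \fun{S\op}{\calg{\iCATco}}$ cut out by the $\pb$-adjointability conditions. Concretely, one must verify that the essentially unique factorization through $L_\lcl C$ provided pointwise automatically respects the base-change and projection-formula data defining a morphism in $\pb$; this should follow from the explicit construction of $q_\sharp$ on $L_\lcl C$ recorded in the proof of \Cref{sta:pb-localization-exist} together with the uniqueness of left adjoints applied componentwise.
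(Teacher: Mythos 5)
Your argument is correct, and the key computation is the same as the paper's, but the overall architecture is genuinely different. The paper does not attempt to verify directly that $L_K:C\to L_\lcl C$ has the universal property of the reflection in $\pbco$; instead it takes the abstract left adjoint $L$ of \Cref{sta:pbl-presentable} with unit $\eta$, produces $\theta:L(C)\to L_\lcl C$ from the fact that $L_\lcl C$ is local (\Cref{sta:pb-localization-exist}), produces $\zeta:L_\lcl C\to L(C)$ by checking that $\eta_C$ inverts each generator $p_\sharp(u_\sharp u^*M\to M)$ of $K_{C(s)}$ --- exactly your computation, using commutation with $p_\sharp$ and $u_\sharp$ (\Cref{sta:pbco-u-sharp-phi}) and locality of the target via \Cref{rmk:pb-local-explicit}, except applied to the single morphism $\eta_C$ rather than to an arbitrary $\phi:C\to D$ --- and then concludes by showing $\zeta\circ\theta$ and $\theta\circ\zeta$ are homotopic to identities from the two universal properties. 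What your route buys is a direct, self-contained proof that $L_K$ is the unit of the reflection, at the price of having to carry out in full the mapping-space identification $\map[\pbco]{L_\lcl C}{D}\simeq\{\phi\in\map[\pbco]{C}{D}\mid\phi\text{ inverts }K_{C(\cdot)}\}$ inside the non-full subcategory $\pbco$; you correctly isolate this as the main burden, and your sketch (essential surjectivity of the pointwise localizations, cocontinuity, and checking the exchange transformation $q_\sharp\psi\to\psi q_\sharp$ on the image of $L_{C(s)}$, where $q_\sharp$ on $L_\lcl C$ is the functor constructed in \Cref{sta:pb-localization-exist}) is the right way to close it. What the paper's route buys is that, by leaning on the already established existence of $L$, it only needs object-level factorizations and can keep the pointwise-to-global bookkeeping implicit (it still invokes the pointwise universal property for $\zeta$ and for $\theta\circ\zeta\simeq\id$, so it is terser rather than logically lighter on that point).
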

\begin{proof}
Let us denote the left adjoint of \Cref{sta:pbl-presentable} by $L$, and the unit by $\eta:\id\to L$.
Let $C\in\pbpr$.
By \Cref{sta:pb-localization-exist}, the morphism $L_K:C\to L_\lcl C$ factors through $\theta:L(C)\to L_\lcl C$.
Conversely, fix $s\in S$ and consider an element $\epsilon:p_\sharp u_\sharp u^*M\to p_\sharp M$ of $K_{C(s)}$ (where $p:s'\to s\in P$, $u\in\lcl$ with $u_\infty=s'$, $M\in C(s')$).
Then $\eta_C(\epsilon)$ is homotopic to the image under $p_\sharp:L(C)(s')\to L(C)(s)$ of
\[
\epsilon:u_\sharp u^*\eta_C(M)\to \eta_C(M).
\]
The latter is an equivalence hence so is $\eta_C(\epsilon)$.
By the universal property of localizations we see that $\eta_C$ factors through $\zeta:L_\lcl C\to L(C)$.
Finally, the universal properties of $L(C)$ and $L_\lcl C$ imply that the composites $\zeta\circ\theta$ and $\theta\circ\zeta$ are both homotopic to the identity.
\end{proof}

\begin{cor}
\label{sta:pb-local-initial}%
The pullback formalism $L_\lcl\pbgmco$ is an initial object of both $\pblpr$ and $\pbl$.
\end{cor}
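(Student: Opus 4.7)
The plan is to deduce both initiality statements formally from the results already established in the excerpt, using the general principle that a left adjoint preserves initial objects (indeed, all colimits). No new geometric input should be required.

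First, I would handle initiality in $\pbl$. By \Cref{sta:pbl-presentable}, the inclusion $\pbl\hookrightarrow\pbco$ admits a left adjoint; call it $L$. By \Cref{sta:pbco-init}, $\pbgmco$ is an initial object of $\pbco$, so $L(\pbgmco)$ is an initial object of $\pbl$. It remains to identify $L(\pbgmco)$ with $L_\lcl\pbgmco$. For this, observe that $\pbgmco\in\pbpr$: indeed, by \Cref{exa:pbgmco} its value at $s\in S$ is $\psh{P_s}$, which is a presentable \icat. Therefore \Cref{sta:pb-pr-localization} applies and yields a canonical equivalence $L(\pbgmco)\simeq L_\lcl\pbgmco$, completing the first part.

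Next, for initiality in $\pblpr$, I would argue analogously: \Cref{sta:pb-pr-localization} exhibits $L_\lcl:\pbpr\to\pblpr$ as the restriction of the left adjoint $L$ to the full \subicats of presentable pullback formalisms. In particular $L_\lcl$ is itself left adjoint to the inclusion $\pblpr\hookrightarrow\pbpr$. Since $\pbgmco$ is initial in $\pbpr$, also by \Cref{sta:pbco-init}, the object $L_\lcl\pbgmco$ is initial in $\pblpr$.

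I do not anticipate any real obstacle: everything is a clean invocation of left adjoints preserving initial objects, together with the factorization already recorded in \Cref{sta:pb-pr-localization}. The only small point to verify is that $\pbgmco$ itself lies in $\pbpr$, which is immediate from the pointwise description in \Cref{exa:pbgmco}.
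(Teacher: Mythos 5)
Your argument is correct and is exactly the deduction the paper intends (the corollary is stated without proof as an immediate consequence of \Cref{sta:pbco-init} and \Cref{sta:pb-pr-localization}): the left adjoint of \Cref{sta:pbl-presentable} preserves the initial object $\pbgmco$, and on the presentable object $\pbgmco$ it is identified with $L_\lcl$ by \Cref{sta:pb-pr-localization}, with the $\pblpr$-statement following since the relevant inclusions are full.
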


We end this section by giving a slightly simpler description of $L_\lcl\pbgmco$.

\begin{lem}
\label{sta:pbgm-localization}%
Let $s\in S$.
Then the following sets of morphisms in $\pbgmco(s)$ have the same strong saturation:
\begin{enumerate}[(i)]
\item
\label{item:pbgm-localization-smaller-set}%
$\{p_\sharp u_\sharp u^*p^*\1{s}\xto{\epsilon} p_\sharp p^*\1{s}\mid p:s'\to s\in P, u\in\lcl, u_\infty=s'\}$, and
\item $K_{\pbgmco(s)}$.
\end{enumerate}
In particular, $L_\lcl \pbgmco(s)$ is the localization of $\pbgmco(s)$ with respect to the set in~\ref{item:pbgm-localization-smaller-set}.
\end{lem}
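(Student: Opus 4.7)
The plan is double-containment of the two strongly saturated classes. One direction is immediate: taking $N = p^*\1{s} = \1{s'}$ in \Cref{notn:localization-set} shows that every morphism of~\ref{item:pbgm-localization-smaller-set} is a member of $K_{\pbgmco(s)}$. I will therefore concentrate on showing that every generator $\alpha \colon p_\sharp u_\sharp u^* N \to p_\sharp N$ of $K_{\pbgmco(s)}$ lies in the strongly saturated class $W$ generated by~\ref{item:pbgm-localization-smaller-set}.

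The first step is to reduce to the case where $N$ is representable. Since $\pbgmco(s') \simeq \psh{P_{s'}}$, every $N$ is a small colimit of representables $\yon(q) \simeq q_\sharp \1{s''}$ with $q \colon s'' \to s'$ in $P$. Moreover $p_\sharp$, $u_\sharp$, and $u^*$ are all cocontinuous---the first two as left adjoints, and $u^*$ because $\pbgmco$ is cocomplete, each $u_i^*$ preserves small colimits, and the limit defining the target of $u^*$ is computed in $\iCATco$. Writing $N \simeq \colim_j q_{j,\sharp}\1{s^{(j)}}$ with $q_j\colon s^{(j)}\to s'$ in $P$, the morphism $\alpha$ is therefore a small colimit, indexed by $j$, of the morphisms
\[
\alpha_j \colon p_\sharp u_\sharp u^* q_{j,\sharp} \1{s^{(j)}} \longrightarrow p_\sharp q_{j,\sharp} \1{s^{(j)}},
\]
so by closure of $W$ under small colimits it suffices to place each $\alpha_j$ into $W$.

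The crux is a base-change identification. Set $u' := q_j^* u \colon I^\addfinal \to P_\amalg$; then $u' \in \lcl$ by \Cref{notn:lcl-assumptions}\ref{item:lcl-functoriality} and $u'_\infty = s^{(j)}$. Componentwise base change in the pullback formalism $\pbgmco$ gives $u^* q_{j,\sharp} \simeq q'_{j,\sharp} (u')^*$ for the assembled pullback $q'_{j,\bullet}$ of $q_j$ along $u$, and a direct computation using the explicit description of $u_\sharp$ from the proof of \Cref{sta:pbco-u-sharp} yields the companion identity $u_\sharp q'_{j,\sharp} \simeq q_{j,\sharp} u'_\sharp$. Together these produce a natural equivalence $u_\sharp u^* q_{j,\sharp} \simeq q_{j,\sharp} u'_\sharp (u')^*$ intertwining the counits of the two adjunctions, and combining with $p_\sharp q_{j,\sharp} = (pq_j)_\sharp$ and $(pq_j)^*\1{s} = \1{s^{(j)}}$ rewrites $\alpha_j$ as
\[
(pq_j)_\sharp \bigl( u'_\sharp (u')^* \1{s^{(j)}} \longrightarrow \1{s^{(j)}} \bigr),
\]
which is precisely a morphism of~\ref{item:pbgm-localization-smaller-set}. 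The final assertion then follows directly from \Cref{sta:localization-pointwise-exists}.

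The chief obstacle I expect is the diagrammatic bookkeeping for the base-change identification, particularly the verification that the resulting equivalence transports the counit of $(u_\sharp, u^*)$ to that of $(u'_\sharp, (u')^*)$ rather than some other natural map. Since $u$ and $u'$ land in $P_\amalg$ rather than in $P$, base change must be applied to each component separately and reassembled across the limit; but both the reassembly and the counit-compatibility are formal consequences of $\pbgmco$ being a pullback formalism together with the explicit construction of $u_\sharp$ from \Cref{sta:pbco-u-sharp}.
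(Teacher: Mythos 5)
Your proof is correct and takes essentially the same approach as the paper: reduce to representable objects using closure of the strongly saturated class under small colimits, then identify each resulting generator with an element of the smaller set via smooth base change and the explicit colimit formula for $u_\sharp$. The only difference is that you spell out the intermediate identities $u^* q_{j,\sharp}\simeq q'_{j,\sharp}(u')^*$ and $u_\sharp q'_{j,\sharp}\simeq q_{j,\sharp}u'_\sharp$ and flag the counit-compatibility check, whereas the paper compresses all of this to the phrase ``by base change.''
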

\begin{proof}
Let $K$ denote the set in~\ref{item:pbgm-localization-smaller-set}.
As $K\subseteq K_{\pbgmco(s)}$, one direction is obvious.
For the reverse inclusion, since $\pbgmco(s')$ is generated under small colimits by representables $\pi:t\to s'\in P_{s'}$, it suffices to consider
\[
\epsilon:p_\sharp u_\sharp u^*\pi_\sharp \pi^*\1{s'}\to p_\sharp \pi_\sharp \pi^*\1{s'}.
\]
By base change, this map is homotopic to
\[
\epsilon:(p\circ \pi)_\sharp (\pi^*u)_\sharp (\pi^*u)^*(p\circ\pi)^*\1{s}\to (p\circ\pi)_\sharp (p\circ\pi)^*\1{s},
\]
which belongs to $K$, and we win.
\end{proof}

\begin{exa}
\label{exa:pbgml}%
Let $\lcl=\lcl_\ivl\cup\lcl_\tau$ from \Cref{exa:local-interval,exa:local-descent}.
Thus, the initial pullback formalism $L_\lcl\pbgmco\in\pblpr$ sends $s\in S$ to the localization of $\psh{P_s}$ with respect to the following two families of maps:
\begin{align*}
  \colim_{[n]\in\sscat}t_n\to t, &&t_\bullet\to t\text{ \v{C}ech nerve associated to a $\tau$-cover of }t, t\to s\in P;\\
  \ivl\times_st\to t,&&t\to s\in P.
\end{align*}
A similar description is valid for $\lcl=\lcl_\ivl\cup\lcl_{\hat{\tau}}$ from \Cref{exa:local-interval,exa:local-hyperdescent}.
\end{exa}

\section{Stability}
\label{sec:stability}
In this section we study pointed pullback formalisms (\Cref{sec:pb-pointed}), and pullback formalisms which are `stable' with respect to a specified object, i.e., for which tensoring with the specified object is an equivalence (\Cref{sec:pb-stable}).
Finally, we describe how to construct stable pullback formalisms (\Cref{sec:pb-stabilization}).

\subsection{Pointed pullback formalisms}
\label{sec:pb-pointed}

\begin{defn}
\label{defn:pb-pointed}%
A pullback formalism $C$ is \emph{pointed} if $C(s)$ is pointed for every $s\in S$.
We denote any zero object in $C(s)$ by $0$ (or $0_s$).
A \emph{morphism of pointed pullback formalisms} is a morphism $\phi:C\to C'$ of pullback formalisms such that $\phi_s(0_s)\simeq 0_s$.
This defines a \subicat $\pbpt\subset\pb$.

We denote the intersection of this \subicat with those of cocomplete, presentable and/or local pullback formalisms in the obvious way: $\pbcopt$, $\pblcopt$, $\pbprpt$, $\pblprpt$.
\end{defn}

\begin{rmk}
\label{rmk:pb-co-pointed}%
Let $C\in\pbco$ and consider, for each $s\in S$, the commutative square in~$\iCAT$
\[
\begin{tikzcd}
\pt
\ar[r]
\ar[d, "\emptyset" left]
&
\pt
\ar[d]
\\
C(s)
\ar[r]
&
\pt
\end{tikzcd}
\]
where the left vertical arrow is the functor determined by an initial object in $C(s)$.
This square is right adjointable if and only if $C(s)$ is pointed.
Moreover, we have a functor $\on{pt}_s:\pbco\to\Sq$ classifying this square (\Cref{sta:pt-functor}).
It follows that $\pbcopt$ fits into a Cartesian square in $\ICAT$:
\begin{equation}
\label{eq:pb-co-pointed-cartesian}
\begin{tikzcd}
\pbcopt
\ar[r, hookrightarrow]
\ar[d]
&
\pbco
\ar[d, "(\on{pt}_s)_s"]
\\
\displaystyle\prod_{s\in S}\SqRAd
\ar[r, hookrightarrow]
&
\displaystyle\prod_{s\in S}\Sq
\end{tikzcd}
\end{equation}
\end{rmk}

\begin{lem}
\label{sta:pt-functor}%
Let $s\in S$.
The association $C\mapsto \on{pt}_s(C)$ underlies a functor
\[
\on{pt}_s:\pbco\to\Sq.
\]
Moreover, this functor admits a left adjoint.
\end{lem}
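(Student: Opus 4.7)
My plan is to realize $\on{pt}_s$ as a composite of functors each of which is manifestly a right adjoint, mimicking the strategy of \Cref{sta:pf-functor} and \Cref{sta:lc-functor}. First, I would extract the underlying cocomplete \icat $C(s)\in\iCATco$ via
\[
\pbco \hookrightarrow \fun{S\op}{\calg{\iCATco}} \xrightarrow{\mathrm{ev}_s} \calg{\iCATco}\to\iCATco,\qquad C\mapsto C(s),
\]
each arrow being a right adjoint: the inclusion by the same argument as \Cref{sta:pbco-presentable} applied pointwise in $\iCATco$ via an analogue of~\eqref{eq:pb-adjointable-cartesian}; evaluation as restriction along $\{s\}\hookrightarrow S\op$; and the forgetful by the analogue of \Cref{rmk:calg-cat} for $\iCATco$.

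The key observation for the next step is that the terminal \icat $\pt$ is the \emph{initial} object of $\iCATco$: any cocontinuous functor $\pt\to\cC$ must send the unique object to the colimit of the empty diagram, i.e., to the initial object of $\cC$, and this determines the functor uniquely up to canonical equivalence. Consequently the fully faithful inclusion
\[
\iCATco \hookrightarrow \fun{\Delta^1}{\iCATco},\qquad \cC\mapsto(\pt\to\cC),
\]
is a right adjoint with left adjoint given by the pushout (cofiber) $(A\to B)\mapsto\pt\amalg_A B$. I would then compose with the pointwise forgetful $\fun{\Delta^1}{\iCATco}\to\fun{\Delta^1}{\iCAT}$ (pointwise right adjoint via free cocompletion) and with right Kan extension along the left-column inclusion $i:\Delta^1\hookrightarrow\Delta^1\times\Delta^1$, $i(0)=(0,0)$, $i(1)=(0,1)$.

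A direct computation of the comma categories $(1,0)\downarrow i$ and $(1,1)\downarrow i$ shows both are empty, so the right Kan extension takes the terminal value $\pt$ at those vertices while reproducing the input arrow on the left column. Applied to $(\pt\to C(s))$ this yields exactly the square of \Cref{rmk:pb-co-pointed}, and the overall composite is $\on{pt}_s$ realized as a right adjoint. No step looks particularly delicate; the only slightly tedious part is unwinding the Kan extension to confirm that the induced maps in the resulting square are the canonical ones, which is routine.
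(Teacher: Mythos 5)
Your proposal is correct, and it reaches the square of \Cref{rmk:pb-co-pointed} by a genuinely different mechanism at the key step. The paper also factors $\on{pt}_s$ through evaluation at $s$ and a right Kan extension along $\Delta^1\hookrightarrow\Delta^1\times\Delta^1$, but it produces the arrow $\pt\to C(s)$ by first right Kan extending along $\{0\}\subset\Delta^1$ to get $C(s)\to\pt$, observing that on $\pbco$ this lands in $\fun[\textup{LAd}]{\Delta^1}{\iCAT}$ (the left adjoint being the initial object, and morphisms being left adjointable because they are cocontinuous), and then flipping via the equivalence $\fun[\textup{LAd}]{\Delta^1}{\iCAT}\simeq\fun[\textup{RAd}]{\Delta^1}{\iCAT}$ of~\cite[Corollary~4.7.4.18]{Lurie_higher-algebra}; the existence of the left adjoint is then checked at the end by verifying accessibility and limit-preservation pointwise. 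You instead stay inside $\iCATco$, where $\pt$ is a zero object, and use that $\cC\mapsto(\pt\to\cC)$ is right adjoint to the cofiber functor $(A\to B)\mapsto\pt\amalg_AB$; this makes every factor of your composite visibly a right adjoint and avoids the appeal to the adjointable-arrow equivalence, which is arguably more elementary. The price is that you must know that $\pbco\hookrightarrow\fun{S\op}{\calg{\iCATco}}$ and $\calg{\iCATco}\to\iCATco$ are right adjoints; your justification of the first of these is the one slightly loose point, since ``an analogue of~\eqref{eq:pb-adjointable-cartesian} internal to $\iCATco$'' would require a version of the adjointable-squares machinery for $\iCATco$ that is not set up in the paper. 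The cleaner argument is the one already implicit in \Cref{sta:pbco-presentable}: the Cartesian square~\eqref{eq:pbco-Cartesian} is a limit of very large presentable \icats along right adjoints, so \emph{both} projections from $\pbco$, in particular $\pbco\to\fun{S\op}{\calg{\iCATco}}$, admit left adjoints. With that substitution your argument goes through, and the remaining steps (initiality of $\pt$ in $\iCATco$, the cofiber adjunction, the emptiness of the relevant comma categories for the right Kan extension) are all correct and yield exactly the square of \Cref{rmk:pb-co-pointed} with its canonical functoriality.
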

\begin{proof}
Forgetting the symmetric monoidal structure and evaluating at $s$ defines a functor $C\mapsto C(s)$:
\[
\on{ev}_s:\fun{S\op}{\calg{\iCAT}}\to\iCAT
\]
Right Kan extension along $\iota_0:\{0\}\subset\Delta^1$ yields a functor
\[
(\iota_0)_*:\iCAT\to\fun{\Delta^1}{\iCAT},
\]
and the composite $(\iota_0)_*\circ\on{ev}_s(C)$ is the diagram $C(s)\to\pt$.
Thus we see that the restriction to $\pbco$ factors through left adjointable functors:
\[
\begin{tikzcd}[column sep=large]
\fun{S\op}{\iCAT}
\ar[r, "(\iota_0)_*\circ\on{ev}_s"]
&
\fun{\Delta^1}{\iCAT}
\\
\pbco
\ar[u, hookrightarrow]
\ar[r, "\omega_s"]
&
\fun[\textup{LAd}]{\Delta^1}{\iCAT}
\ar[u, hookrightarrow]
\end{tikzcd}
\]
Using the equivalence $\fun[\textup{LAd}]{\Delta^1}{\iCAT}\simeq\fun[\textup{RAd}]{\Delta^1}{\iCAT}$ of~\cite[Cor\-ol\-lary~4.7.4.18]{Lurie_higher-algebra},
the composite sends $C$ to the diagram $\pt\xto{\emptyset}C(s)$.
Finally, we right Kan extend along the inclusion $\Delta^1\stackrel{\iota_0\times\id}{\hookrightarrow}\Delta^1\times\Delta^1$.
In total, $\on{pt}_s$ is the functor
\[
\pbco\xto{\omega_s}\fun[\textup{LAd}]{\Delta^1}{\iCAT}\simeq\fun[\textup{RAd}]{\Delta^1}{\iCAT}\into\fun{\Delta^1}{\iCAT}\xto{(\iota_0\times\id)_*}\Sq.
\]

To show that this composite has a left adjoint, it suffices to show that it is accessible and preserves limits.
Limits and colimits in $\Sq=\fun{\Delta^1\times\Delta^1}{\iCAT}$ are computed pointwise and it therefore suffices to show that both $C\mapsto C(s)$ and $C\mapsto\pt$ are accessible and preserve limits.
This is clear for the second (constant) functor.
And the first functor may be written as a composition of functors 
\[
\pbco\into\fun{S\op}{\calg{\iCAT}}\xto{\on{ev}_s}\calg{\iCAT}\to\iCAT,
\]
each of which is accessible and preserves limits (by \Cref{sta:pbco-presentable,sta:pb-presentable} in the case of the first inclusion).
\end{proof}

\begin{prop}
\label{sta:pb-pointed-presentable}%
The very large \icat $\pbcopt$ is presentable, and the inclusion $\pbcopt\into\pbco$ admits a left adjoint.
\end{prop}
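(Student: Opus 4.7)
The plan is to imitate the proofs of \Cref{sta:pb-presentable} and \Cref{sta:pbl-presentable}, using the Cartesian square in $\ICAT$ displayed in~\eqref{eq:pb-co-pointed-cartesian} of \Cref{rmk:pb-co-pointed}. Concretely, I would argue that each of the three other corners is a (very large) presentable \icat, and that the two functors meeting at the lower-right corner are right adjoints; then the closure of presentable \icats and right-adjoint functors under large limits in $\ICAT$ delivers the result in one step.

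First, $\pbco$ is presentable by \Cref{sta:pbco-presentable}. The same reasoning that showed $\Sq$ is presentable in the proof of \Cref{sta:pb-presentable} applies here, and hence $\prod_{s\in S}\Sq$ is presentable as well. For the lower-left corner, note that by the dual of~\cite[Corollary~4.7.4.18]{Lurie_higher-algebra} the \icat $\SqRAd$ is presentable and the inclusion $\SqRAd\hookrightarrow\Sq$ admits a left adjoint; taking products, the bottom horizontal map in~\eqref{eq:pb-co-pointed-cartesian} is a right adjoint between presentable \icats.

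It remains to show that the right vertical map $(\on{pt}_s)_s$ is a right adjoint. Since right adjoints are detected componentwise on a product, this reduces to proving that each $\on{pt}_s$ is a right adjoint, which is exactly the second assertion of \Cref{sta:pt-functor}. Combining these facts, the pullback~\eqref{eq:pb-co-pointed-cartesian} is a limit in $\ICAT$ of presentable \icats along right-adjoint functors, and therefore $\pbcopt$ is presentable, while the forgetful functor $\pbcopt\to\pbco$ is itself a right adjoint between presentable \icats and in particular admits a left adjoint.

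I do not anticipate a substantive obstacle, as all the heavy lifting has already been done: the presentability and adjointability statements needed about $\Sq$, $\SqRAd$ and $\pbco$ are in place, and the key new input, namely that pointedness can be tested by adjointability of the square in \Cref{rmk:pb-co-pointed} in a functorial way, is exactly the content of \Cref{sta:pt-functor}. The only minor subtlety to be careful about is invoking the dual (right-adjointable) version of~\cite[Corollary~4.7.4.18]{Lurie_higher-algebra} rather than the version used in \Cref{sta:pb-presentable}.
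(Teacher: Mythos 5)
Your proposal is correct and follows the paper's own argument: the paper likewise reduces via the Cartesian square~\eqref{eq:pb-co-pointed-cartesian} to showing that each $\on{pt}_s$ is a right adjoint, which is exactly \Cref{sta:pt-functor}, with the presentability of $\SqRAd$ and the adjointability of its inclusion into $\Sq$ handled just as you describe.
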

\begin{proof}
We use the Cartesian square~(\ref{eq:pb-co-pointed-cartesian}) to reduce, as usual, to prove that $\on{pt}_s$ is a right adjoint, for every $s\in S$.
And that was proved in \Cref{sta:pt-functor}.
\end{proof}

\begin{rmk}
\label{rmk:pb-co-pt-full}%
Note that the inclusion $\pbcopt\into\pbco$ is fully faithful.
Indeed, a morphism of cocomplete pullback formalisms preserves initial objects.
It follows that the left adjoint of \Cref{sta:pb-pointed-presentable} is a localization of \icats.
\end{rmk}

In the rest of this subsection we describe this localization on presentable pullback formalisms.
\begin{rmk}
\label{rmk:pr-pt}%
Recall from~\cite[\S\,4.8.2]{Lurie_higher-algebra} that the full \subicat of pointed presentable \icats (and left adjoint functors) $\Prpt\subset\PrL$ is a localization, and that the functor $\Lpt:\PrL\to\Prpt$ is symmetric monoidal.
We denote the induced (left adjoint) functors $\calg{\PrL}\to\calg{\Prpt}$ and $\fun{S\op}{\calg{\PrL}}\to\fun{S\op}{\calg{\Prpt}}$ still by $\Lpt$.
\end{rmk}

\begin{prop}
\label{sta:pb-pr-Lpt}%
The left adjoint of \Cref{sta:pb-pointed-presentable} fits into an essentially commutative diagram in~$\ICAT$
\[
\begin{tikzcd}
\pbco
\ar[r]
&
\pbcopt
\\
\pbpr
\ar[r, "\Lpt"]
\ar[u, hookrightarrow]
&
\pbprpt
\ar[u, hookrightarrow]
\\
\pblpr
\ar[r, "\Lpt"]
\ar[u, hookrightarrow]
&
\pblprpt
\ar[u, hookrightarrow]
\end{tikzcd}
\]
where the vertical functors are the canonical full inclusions.
\end{prop}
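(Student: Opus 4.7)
The plan is to show that $\Lpt$, obtained by pointwise application of the symmetric monoidal localization $\Lpt\colon\PrL\to\Prpt$ of \Cref{rmk:pr-pt}, restricts to a left adjoint of the inclusions $\pbprpt\hookrightarrow\pbpr$ and $\pblprpt\hookrightarrow\pblpr$, and that on $\pbpr$ this left adjoint agrees with the restriction of the left adjoint $L$ of \Cref{sta:pb-pointed-presentable}.

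Applied pointwise, the symmetric monoidal adjunction $\Lpt\dashv\iota$ induces an adjunction
\[
\Lpt:\fun{S\op}{\calg{\PrL}}\rightleftarrows\fun{S\op}{\calg{\Prpt}}:\iota.
\]
I would first check that this restricts to $\Lpt\colon\pbpr\to\pbprpt$. Pointedness of $\Lpt C(s)$ is built in. For each $p\in P$, the functor $\Lpt p^*\colon\Lpt C(s)\to\Lpt C(s')$ admits a left adjoint given by $\Lpt\circ p_\sharp\circ\iota$, as a formal composition of adjoints using that $\iota$ is a fully faithful right adjoint to $\Lpt$. Base change and the projection formula for $\Lpt C$ then follow from those for $C$ by functoriality and symmetric monoidality of $\Lpt$. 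For morphisms $\phi\colon C\to C'$ in $\pbpr$, commutation of $\Lpt\phi$ with the pointed $p_\sharp=\Lpt\circ p_\sharp\circ\iota$ reduces directly to commutation of $\phi$ with $p_\sharp$. The resulting $\Lpt\colon\pbpr\to\pbprpt$ is then left adjoint to the inclusion, by restricting the pointwise adjunction along the respective full inclusions (fullness of $\pbprpt\hookrightarrow\pbpr$ following by the same argument as for \Cref{rmk:pb-co-pt-full}).

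Next, I would verify that $\Lpt$ preserves locality, thereby restricting further to $\pblpr\to\pblprpt$. For $C\in\pblpr$, $u\in\lcl$, and $M\in\Lpt C(u_\infty)$, the criterion of \Cref{rmk:pb-local-explicit} amounts to the equivalence $\colim_i u(i)_\sharp^{\mathrm{pt}}u(i)^{*,\mathrm{pt}}M\isoto M$. Unpacking the descriptions of $u(i)^{*,\mathrm{pt}}$ and $u(i)_\sharp^{\mathrm{pt}}$ from the previous step and using that $\Lpt$ preserves small colimits, this reduces via $\iota$ to the local condition for $C$, which holds by hypothesis. Combined with the previous step, this establishes the commutativity of the bottom square.

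It remains to identify this $\Lpt\colon\pbpr\to\pbprpt$ with the restriction of $L$, equivalently, to show that $L(C)$ lies in $\pbprpt$ and coincides with $\Lpt(C)$ for $C\in\pbpr$. Since all inclusions in the diagram are full, by uniqueness of left adjoints it suffices to show that $L$ sends $\pbpr$ to $\pbprpt$. This is a pointwise question: the left adjoint of $\calg{\iCATco}^{\mathrm{pt}}\hookrightarrow\calg{\iCATco}$, whose pointwise application underlies $L$, is given by tensoring with $\mathcal{S}_*$, hence its restriction to $\calg{\PrL}$ factors through $\calg{\Prpt}$ and agrees with the $\Lpt$ of \Cref{rmk:pr-pt}; in particular, it preserves presentability. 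I expect the main obstacle to be precisely this pointwise identification, comparing pointed-completions performed in the two different settings $\calg{\iCATco}$ and $\calg{\PrL}$: while conceptually both are ``add a zero object'', carrying this out rigorously requires some care with the interaction between the tensor products on the two \icats and with the size-issue conventions adopted in the paper.
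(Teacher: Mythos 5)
Your first two steps (that pointwise pointed-ification carries $\pbpr$ into $\pbprpt$ and preserves locality) run parallel to the paper's argument, but the final identification with the left adjoint $L$ of \Cref{sta:pb-pointed-presentable} contains a genuine gap. The reduction itself is fine: since all the inclusions in the diagram are full, it would indeed suffice to know that $L$ sends $\pbpr$ into $\pbprpt$. But your justification of that fact---that the left adjoint of the pointed inclusion into $\calg{\iCATco}$, applied pointwise, ``underlies $L$''---is exactly the statement at issue. $L$ is produced abstractly from the Cartesian square~(\ref{eq:pb-co-pointed-cartesian}) and presentability of $\pbcopt$, as a left adjoint to the inclusion of a subcategory whose morphisms are constrained to commute with the functors $p_\sharp$; nothing established up to that point says $L$ is computed objectwise, and proving that it is (on presentable inputs) is precisely the content of the proposition, so the argument is circular. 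The non-circular route, which is the paper's (modelled on \Cref{sta:pb-pr-localization}), is to compare $L(C)$ and $\Lpt C$ directly: the canonical map $C\to\Lpt C$ factors as $\theta\colon L(C)\to\Lpt C$ by the universal property of $L$, while the unit $\eta_C\colon C\to L(C)$ factors as $\zeta\colon\Lpt C\to L(C)$ by the universal property of $C(s)\to C(s)_\pt$ among colimit-preserving functors into \emph{arbitrary} pointed cocomplete targets (note that $L(C)(s)$ is only known to be pointed and cocomplete, not presentable---which is also why your adjunction between $\pbpr$ and $\pbprpt$ cannot by itself pin down $L(C)$); one then checks that $\theta\zeta$ and $\zeta\theta$ are homotopic to identities using these universal properties.

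Two smaller points. First, your formula for the pointed $p_\sharp$ is incorrect: the forgetful functor $U\colon\Lpt C(s')\to C(s')$ is neither fully faithful nor colimit-preserving (coproducts of pointed objects are wedge sums), so ``$\Lpt\circ p_\sharp\circ\iota$'' is not a formal composite of adjoints, and $x\mapsto (p_\sharp Ux)_+$ is not left adjoint to the induced $p^*$ on pointed objects; the correct left adjoint sends a pointed object $\pt\to x$ to $\pt\sqcup_{p_\sharp\pt}p_\sharp x$ and is characterized by $p_\sharp\pblank_+\simeq\pblank_+p_\sharp$. Its existence is best obtained as in the paper (the induced $p^*$ preserves limits and colimits, so the adjoint functor theorem applies), or from the fact that $\Lpt=\pblank\otimes\Spc_\pt$ carries the adjunction $p_\sharp\dashv p^*$ in $\PrL$ to an adjunction; this matters downstream, since your locality check ``unpacks'' these adjoints. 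Second, $\pbpr$ is not full in $\fun{S\op}{\calg{\PrL}}$, so ``restricting the pointwise adjunction'' to get $\Lpt$ left adjoint to $\pbprpt\hookrightarrow\pbpr$ is not automatic: one must check that under the adjunction equivalence a transformation commutes with $p_\sharp$ if and only if its transpose commutes with the pointed $p_\sharp$, which follows from the characterization above together with a generation-under-colimits argument, but should be said.
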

\begin{proof}
Let $C$ be a presentable pullback formalism.
We need to prove that $\Lpt C$ is $P$-adjointable.
For this let $p:s'\to s\in P$ be a morphism.
Recall that, for any $t\in S$, $\Lpt C(t)$ is the \icat of pointed objects in $C(t)$, and that $\pblank_+:C(t)\to \Lpt C(t)$ freely adds a base point.
Since $p^*:\Lpt C(s)\to\Lpt C(s')$ preserves both limits and colimits, it admits a left adjoint $p_\sharp$ characterized by $p_\sharp\pblank_+\simeq\pblank_+ p_\sharp$.
The base change property and projection formula then easily follow from the corresponding properties for $C$.
To complete the proof that the top square commutes, one now proceeds exactly as in \Cref{sta:pb-pr-localization}.

For the bottom square we need to prove that if $C$ is, in addition, local, then so is $\Lpt C$.
This again follows from the universal property of $\Lpt C(s)$ for each $s\in S$, and \Cref{rmk:pb-local-cartesian}.
\end{proof}

\subsection{Stable pullback formalisms}
\label{sec:pb-stable}
It follows from \Cref{sta:pbco-init,sta:pb-pr-Lpt} that $\Lpt\pbgmco$ is an initial object of $\pbcopt$ (as well as $\pbprpt$), and we denote, for given $C\in\pbcopt$, the essentially unique morphism $\Lpt\pbgmco\to C$ by $\morgm{\blank}$.

\begin{notn}
\label{notn:sphere-objects}%
For the rest of this section, we fix a small set of objects $\T$ in $\Lpt\pbgmco(\1{S})$.
In particular, we have, for any given $C\in\pbcopt$, a set of objects $\morgm{\T}$ in $C(\1{S})$.
\end{notn}

\begin{defn}
\label{defn:pb-stable}%
Let $C\in\pbcopt$ and $x\in \T$.
We say that $C$ is \emph{$x$-stable} if $\morgm{x}$ is an invertible object of the symmetric monoidal \icat $C(\1{S})^\otimes$.
We say that $C$ is \emph{$\T$-stable} if it is $x$-stable for each $x\in\T$.
This defines a full \subicat $\pbcost\subseteq\pbcopt$.
As before we denote the intersection of this \subicat with the one of local pullback formalisms in the obvious way, namely $\pblcost$.
\end{defn}

\begin{rmk}
\label{rmk:tensor-invertibe}%
Recall that an object $x$ in a symmetric monoidal \icat $\cC^\otimes$ is invertible if any of the following two equivalent conditions is satisfied:
\begin{enumerate}[(i)]
\item there exists an object $x^\vee$ and an equivalence $x\otimes x^\vee\simeq\1{\cC}$;
\item the functor $x\otimes\blank:\cC\to\cC$ is an equivalence.
\end{enumerate}
\end{rmk}

\begin{prop}
\label{sta:pbcost-presentable}%
The very large \icat $\pbcost$ is presentable and the inclusion $\pbcost\into\pbcopt$ admits a left adjoint.
\end{prop}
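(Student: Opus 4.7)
The plan is to follow the pattern of \Cref{sta:pb-presentable,sta:pbco-presentable,sta:pbl-presentable,sta:pb-pointed-presentable}: exhibit $\pbcost$ as a Cartesian pullback in $\ICAT$ of presentable \icats along right adjoint functors, so that both the presentability and the existence of a left adjoint to the inclusion follow simultaneously.

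For each $x\in\T$, I would construct a functor
\[
T_x:\pbcopt\longrightarrow\fun{\Delta^1}{\iCAT}
\]
whose value at $C$ is the endofunctor $\morgm{x}\otimes\pblank:C(\1{S})\to C(\1{S})$. By \Cref{rmk:tensor-invertibe}, the condition that $C$ be $x$-stable is exactly the condition that $T_x(C)$ lie in the full \subicat $\fun{\Delta^1}{\iCAT}^{\simeq}\subseteq\fun{\Delta^1}{\iCAT}$ spanned by equivalences. Consequently, $\pbcost$ fits into a Cartesian square in $\ICAT$
\[
\begin{tikzcd}
\pbcost\ar[r,hookrightarrow]\ar[d] & \pbcopt\ar[d,"(T_x)_{x\in\T}"]\\
\prod_{x\in\T}\fun{\Delta^1}{\iCAT}^{\simeq}\ar[r,hookrightarrow] & \prod_{x\in\T}\fun{\Delta^1}{\iCAT}
\end{tikzcd}
\]
and it will suffice to verify that (a) the bottom horizontal is a right adjoint inclusion of presentable \icats, and (b) each $T_x$ is a right adjoint.

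Statement (a) should be straightforward. The full \subicat $\fun{\Delta^1}{\iCAT}^{\simeq}$ is the essential image of the fully faithful ``identity diagram'' functor $\iCAT\to\fun{\Delta^1}{\iCAT}$, $\cC\mapsto\id_{\cC}$, so it is equivalent to $\iCAT$ and hence presentable. Direct inspection shows that this functor admits both endpoint evaluations as adjoints: the target evaluation $\on{ev}_1$ as a left adjoint and the source evaluation $\on{ev}_0$ as a right adjoint. In particular, the inclusion $\fun{\Delta^1}{\iCAT}^{\simeq}\hookrightarrow\fun{\Delta^1}{\iCAT}$ is reflective.

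The main work is in statement (b). The plan is to decompose $T_x$ as a composite of accessible limit-preserving functors. Using \Cref{sta:pbco-init,sta:pb-pr-Lpt}, the \icat $\pbcopt$ has $\Lpt\pbgmco$ as initial object, so that evaluation at $\1{S}$ together with the canonical natural transformation $\Lpt\pbgmco\to\id_{\pbcopt}$ lifts the forgetful functor $\pbcopt\to\calg{\iCAT}$, $C\mapsto C(\1{S})$, through the slice $(\calg{\iCAT})_{\Lpt\pbgmco(\1{S})/}$. The latter is presentable and the lift is accessible and preserves limits (both properties being inherited from the forgetful and evaluation functors). Combining this lift with the operation that sends $\phi:\Lpt\pbgmco(\1{S})\to\cD$ to the endofunctor $\phi(x)\otimes\pblank:\cD\to\cD$—which is accessible and limit-preserving because the tensor product is encoded as part of the symmetric monoidal structure and the target evaluation $\on{ev}_1$ at $x:\pt\to\Lpt\pbgmco(\1{S})$ is accessible and limit-preserving—produces $T_x$ as a composite of right adjoints.

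The main obstacle is this last functorial construction: encoding the $C$-dependence of ``tensor product with $\morgm{x}$'' as a genuine functor of $C$ requires an unstraightening argument in the spirit of \Cref{sta:pf-functor}, keeping careful track of how the symmetric monoidal structure and the initial map $\Lpt\pbgmco\to C$ vary with $C$. Once this is accomplished, the pullback square above is one of presentable \icats and right adjoint functors in $\ICAT$, so $\pbcost$ is presentable and the inclusion $\pbcost\hookrightarrow\pbcopt$ is a right adjoint, exactly as claimed.
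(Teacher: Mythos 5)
Your proposal is correct in outline, but it takes a genuinely different route from the paper's --- one the paper itself anticipates in \Cref{rmk:free-forgetful-invertibles}, where it is noted that the ``familiar device'' could be used but that Robalo's technology is preferable. The paper encodes $\T$-stability via the reflective embedding $\calg{\iCAT}^x_{\Lpt\pbgmco(\1{S})/}\into\calg{\iCAT}_{\Lpt\pbgmco(\1{S})/}$ of algebras inverting $x$ (Robalo's Proposition~2.1) and forms the Cartesian square~\eqref{eq:pb-stable-cartesian} over the functor $\on{ev}_{\1{S}}$ of~\eqref{eq:ev-1S}; the only thing left to verify is that $\on{ev}_{\1{S}}$ is a right adjoint, which is quick (the forgetful functor on coslices is a right adjoint because $\pbcopt\to\fun{S\op}{\calg{\iCAT}}$ is, and evaluation at $\1{S}$ has a left adjoint given by left Kan extension). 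You instead pull back along the equivalences inside $\fun{\Delta^1}{\iCAT}$ using $T_x(C)=\morgm{x}\otimes\pblank$: your step (a) is fine (the constant-diagram functor $\iota$ satisfies $\on{ev}_1\dashv\iota\dashv\on{ev}_0$ and its essential image is exactly the equivalences), and the translation of $x$-stability is \Cref{rmk:tensor-invertibe}. What your route buys is independence from Robalo's localization result; what it costs is precisely the step you flag as the main obstacle: $T_x$ must be produced as an honest functor, by lifting through $\calg{\iCAT}_{\Lpt\pbgmco(\1{S})/}$ (this part coincides with the paper's $\on{ev}_{\1{S}}$) and then rigidifying $\phi\mapsto\phi(x)\otimes\pblank$ by a diagrammatic construction with restrictions and right Kan extensions in the style of \Cref{sta:pf-functor}; limit-preservation and accessibility then follow since limits and filtered colimits of algebras under a fixed algebra are computed on underlying \icats. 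This is doable, so I do not regard it as a flaw in the approach, but it is real work that the paper deliberately sidesteps, and the paper's choice pays an extra dividend in \Cref{sec:pb-stabilization}, where the same inversion technology is reused to describe the left adjoint $\LT$ explicitly. For a complete proof along your lines you would have to actually carry out the construction of $T_x$, which at present is only sketched.
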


\begin{rmk}
\label{rmk:free-forgetful-invertibles}%
We could use our familiar device of adjointable squares to prove \Cref{sta:pbcost-presentable}, but in view of later arguments (in \Cref{sec:pb-stabilization}) it will be preferable to use the technology of~\cite[\S\,2.1]{Robalo_K-theory-and-the-bridge}.
Given a symmetric monoidal \icat $\cC^\otimes\in\calg{\iCAT}$ and an object $x\in \cC$, we denote by 
\begin{equation}
\label{eq:picking-out-invertibles}
\calg{\iCAT}^x_{\cC/}\into\calg{\iCAT}_{\cC/}
\end{equation}
the full \subicat spanned by those algebras $\cC^\otimes\to\cD^\otimes$ sending $x$ to an invertible object.
By~\cite[Proposition~2.1]{Robalo_K-theory-and-the-bridge}, this embedding admits a left adjoint.
\end{rmk}

\begin{rmk}
\label{rmk:pb-stable-cartesian}%
Recall from \Cref{sta:pb-pr-Lpt} that $\Lpt\pbgmco$ is an initial object of $\pbcopt$.
We may therefore consider the following composition $\on{ev}_{\1{S}}:\pbcopt\to\calg{\iCAT}_{\Lpt\pbgmco(\1{S})/}$:
\begin{equation}
\label{eq:ev-1S}
(\pbcopt)_{\Lpt\pbgmco/}\to\fun{S\op}{\calg{\iCAT}}_{\Lpt\pbgmco/}\xto{\1{S}^*}\calg{\iCAT}_{\Lpt\pbgmco(\1{S})/}.
\end{equation}
where the first functor is the canonical forgetful functor, and the second is evaluation at the final object $\1{S}$.
It follows from \Cref{rmk:free-forgetful-invertibles} that $\pbcost$ fits into a Cartesian square in $\ICAT$:
\begin{equation}
\label{eq:pb-stable-cartesian}%
\begin{tikzcd}
\pbcost
\ar[r, hookrightarrow]
\ar[d]
&
\pbcopt
\ar[d, "\on{ev}_{\1{S}}"]
\\
\displaystyle\prod_{x\in\T}\calg{\iCAT}_{\Lpt\pbgmco(\1{S})/}^x
\ar[r, hookrightarrow]
&
\displaystyle\prod_{x\in\T}\calg{\iCAT}_{\Lpt\pbgmco(\1{S})/}
\end{tikzcd}
\end{equation}
\end{rmk}

\begin{proof}[Proof of \Cref{sta:pbcost-presentable}]
We already remarked that~(\ref{eq:picking-out-invertibles}) is an adjunction between very large presentable \icats.
Given the Cartesian square~(\ref{eq:pb-stable-cartesian}) and the fact that $\pbcopt$ is presentable (\Cref{sta:pb-pointed-presentable}), we reduce to showing that $\on{ev}_{\1{S}}$ is a right adjoint, as usual.
The first functor in~(\ref{eq:ev-1S}) is a right adjoint since $\pbcopt\to\fun{S\op}{\calg{\iCAT}}$ is~\cite[Proposition~5.2.5.1]{Lurie_higher-topos}, as we proved in \Cref{sta:pb-presentable,,sta:pbco-presentable,,sta:pb-pointed-presentable}.
The second functor in~(\ref{eq:ev-1S}) admits a left adjoint induced by left Kan extension.
\end{proof}
\subsection{Stabilization}
\label{sec:pb-stabilization}
Let us denote by $\pbprst\subseteq\pbprpt$ the full \subicat spanned by $\T$-stable presentable pointed pullback formalisms.
Our goal for the rest of the section is to describe the left adjoint of \Cref{sta:pbcost-presentable} more explicitly on $\pbprst$, under a mild assumption on $\T$.
We use~\cite[\S\,2.2]{Robalo_K-theory-and-the-bridge} to achieve this (see also~\cite[\S\,6.1]{hoyois:six-operations-equivariant}).

\begin{notn}
\label{notn:pb-stabilization}%
Given a set $\X$ of objects in a presentably symmetric monoidal \icat~$\cC^\otimes$ (such as $\T$ in $\Lpt\pbgmco(\1{S})$), we will use the following notation in the rest of the section:
\begin{itemize}
\item
Let $I_\X$ denote the poset of finite subsets of $\X$.
For an element $X\in I_\X$ with $X=\{x_1,\ldots,x_r\}$, let $\otimes(X)$ denote a choice of an object~$x_1\otimes\cdots\otimes x_r$ in $\cC$ (this is unique up to equivalence).
\item
Let $\cC[\X^{-1}]^\otimes$ denote the colimit in $\calg{\PrL}$:
\[
\colim_{X\in I_\X}\cC[\otimes(X)^{-1}]^\otimes
\]
where $\cC[x^{-1}]^\otimes$ denotes the formal inversion of $\cC^\otimes$ with respect to $x\in\cC$ in $\calg{\PrL}$, see~\cite[Definition~2.6]{Robalo_K-theory-and-the-bridge}.
\end{itemize}
\end{notn}

\begin{lem}
\label{sta:formal-inversion-set}%
Restriction along $\cC^\otimes\to\cC[\X^{-1}]^\otimes$ induces a fully faithful embedding
\[
\calg{\PrL}_{\cC[\X^{-1}]/}\into\calg{\PrL}_{\cC/}\quad\resp{\Mod[{\cC[\X^{-1}]}]{\PrL}\into\Mod[\cC]{\PrL}}
\]
and its essential image is spanned by the algebras $\cC^\otimes\to\cD^\otimes$ sending $x$ to an invertible object in $\cD$, for each $x\in\X$ \resp{by the modules on which $x$ acts as an equivalence for each $x\in\X$}.
Moreover, this embedding preserves sifted \resp{all} colimits and admits a left adjoint $\pblank[\X^{-1}]=\blank\otimes_{\cC}\cC[\X^{-1}]$.
\end{lem}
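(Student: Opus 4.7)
The plan is to reduce the lemma to the single-element case, which is established in \cite{Robalo_K-theory-and-the-bridge}. The key observation to bridge from finite to arbitrary sets is that, in any symmetric monoidal $\iCAT$, invertibility of $\otimes(X)$ for $X=\{x_1,\ldots,x_r\}$ forces each individual $x_i$ to be invertible: given an inverse $Y$ of $\otimes(X)$, the object $(x_2\otimes\cdots\otimes x_r)\otimes Y$ is an inverse of $x_1$, and similarly for the other factors. Consequently $\cC[\otimes(X)^{-1}]^\otimes$ universally inverts every element of $X$, so Robalo's singleton result directly yields the full statement of the lemma for any finite $X\in I_{\X}$.

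For a general $\X$, I would pass to the filtered colimit $\cC[\X^{-1}]^\otimes=\colim_{X\in I_{\X}}\cC[\otimes(X)^{-1}]^\otimes$ in $\calg{\PrL}$ and identify its universal property: for $\cD\in\calg{\PrL}_{\cC/}$, the space $\map[\calg{\PrL}_{\cC/}]{\cC[\X^{-1}]}{\cD}$ is the limit over $X\in I_{\X}^{\opname}$ of the finite-case spaces, each of which is either contractible (if the structure map $\cC\to\cD$ inverts $X$) or empty. Hence the inclusion $\calg{\PrL}_{\cC[\X^{-1}]/}\hookrightarrow\calg{\PrL}_{\cC/}$ is fully faithful with essential image those $\cC$-algebras inverting every $x\in\X$, and the left adjoint is base change $\blank\otimes_\cC\cC[\X^{-1}]$ by the standard pushout formula. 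The module statement is treated analogously starting from the module-version of Robalo's singleton result.

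The step I expect to require the most care is the closure of the essential image under the appropriate class of colimits. I would identify the essential image of $\calg{\PrL}_{\cC[\X^{-1}]/}\hookrightarrow\calg{\PrL}_{\cC/}$ as the intersection over $x\in\X$ of the essential images of the singleton inclusions $\calg{\PrL}_{\cC[x^{-1}]/}\hookrightarrow\calg{\PrL}_{\cC/}$, and likewise for modules. Each singleton inclusion is closed under sifted \resp{all} colimits by Robalo, and intersections of such closed subcategories inherit the same closure property, reducing the question to the singleton case. The asymmetry between algebras (sifted) and modules (all) in Robalo ultimately reflects the fact that the forgetful functor $\calg{\PrL}\to\PrL$ creates only sifted colimits~\cite[Corollary~3.2.3.2]{Lurie_higher-algebra}, while $\cC\to\cC[\X^{-1}]$ is an idempotent $\cC$-algebra, making $\Mod[{\cC[\X^{-1}]}]{\PrL}\hookrightarrow\Mod[\cC]{\PrL}$ a smashing localization and thus closed under all colimits.
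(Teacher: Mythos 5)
Your proposal is correct, and it shares the paper's core strategy---reduce to Robalo's single-object results \textsl{via} the defining filtered colimit over finite subsets $X\in I_\X$, using the observation that invertibility of $\otimes(X)$ forces invertibility of each factor (the paper uses this only implicitly, for the essential-image description)---but the execution differs in two places. For full faithfulness the paper argues uniformly for algebras and modules through the adjunction $\lambda=\blank\otimes_\cC\cC[\X^{-1}]\dashv\rho$, with $\lambda$ supplied by~\cite[Theorem~4.5.3.1]{Lurie_higher-algebra} and transported to algebras by $\calg{\Mod[\cE]{\PrL}}\simeq\calg{\PrL}_{\cE/}$: since $\rho$ is conservative, it suffices to check that the unit $\rho\to\rho\lambda\rho$ is an equivalence, and compatibility of $\otimes$ with the filtered colimit reduces this to~\cite[Proposition~2.9]{Robalo_K-theory-and-the-bridge}. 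You instead establish the universal property of $\cC[\X^{-1}]$ in $\calg{\PrL}_{\cC/}$ by a contractible-or-empty analysis of mapping spaces out of the colimit, which is a valid and rather transparent route for the algebra case; note, however, that the module case you dismiss as ``analogous'' is exactly where the paper's unit computation does its work, so you should spell out that for $M\in\Mod[{\cC[\X^{-1}]}]{\PrL}$ one has $M\otimes_\cC\cC[\X^{-1}]\simeq\colim_{X\in I_\X}M\otimes_\cC\cC[\otimes(X)^{-1}]\simeq M$, using Robalo's module statement at each finite stage. For colimit preservation the paper is more direct: restriction commutes with the forgetful functors to $\PrL$, which detect all colimits for modules~\cite[Corollary~3.4.4.6]{Lurie_higher-algebra} and sifted colimits for algebras~\cite[Proposition~3.2.3.1]{Lurie_higher-algebra}. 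Your closure-of-essential-image argument also works, but the attribution ``by Robalo'' for the singleton closure is not literally a statement in that paper, and the smashing/idempotency justification for modules presupposes full faithfulness (idempotency of $\cC[\X^{-1}]$ over $\cC$ is essentially the counit equivalence), so it is only legitimate because you prove full faithfulness first; the cleaner inputs are the same detection statements from~\cite{Lurie_higher-algebra} that the paper cites.
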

\begin{proof}
By~\cite[Corollary~3.4.4.6]{Lurie_higher-algebra}, forgetting the module structure $\Mod[\cD]{\PrL}\to\PrL$ detects colimits, for every $\cD^\otimes\in\calg{\PrL}$.
It follows that the functor $\Mod[{\cC[\X^{-1}]}]{\PrL}\into\Mod[\cC]{\PrL}$ preserves colimits.
Since forgetting the algebra structure similarly detects sifted colimits~\cite[Proposition~3.2.3.1]{Lurie_higher-algebra}, we deduce that the induced functor
\[
\calg{\PrL}_{\cC[\X^{-1}]/}\simeq\calg{\Mod[{\cC[\X^{-1}]}]{\PrL}}\to\calg{\Mod[\cC]{\PrL}}\simeq\calg{\PrL}_{\cC/}
\]
also preserves sifted colimits, where we used the equivalence of~\cite[Corollary~3.4.1.7]{Lurie_higher-algebra} between algebra objects in $\mathcal{D}$-modules and $\mathcal{D}$-algebra objects.

Let us denote the functor(s) in the statement by $\rho$.
It admits a left adjoint $\lambda$, by~\cite[Theorem~4.5.3.1, Remark~4.5.3.2]{Lurie_higher-algebra} for the module \icats, and, since $\lambda$ is symmetric monoidal, it then passes to commutative algebra objects \textsl{via} the same equivalence $\calg{\Mod[\cD]{\PrL}}\simeq\calg{\PrL}_{\cD/}$.
For fully faithfulness it therefore suffices to prove that the counit $\lambda\circ\rho\to\id$ is an equivalence.
By~\cite[Corollary~4.2.3.2]{Lurie_higher-algebra} and~\cite[Lemma~3.2.2.6]{Lurie_higher-algebra}, the functor $\rho$ is conservative, and we reduce to proving that the unit $\rho\to\rho\circ\lambda\circ\rho$ is an equivalence.
But this identifies with $\rho\pblank\to\rho\pblank\otimes_{\cC}\cC[\X^{-1}]$ and since the tensor product preserves colimits in each variable, it suffices to show the analogous statement for a singleton set $\X$, which is~\cite[Proposition~2.9]{Robalo_K-theory-and-the-bridge}.
That result also easily implies the description of the essential image of $\rho$.
\end{proof}

\begin{rmk}
\label{rmk:modules-over-final}%
Recall that $\Lpt\pbgmco$ is an initial object of $\pbprpt$.
For every $C\in\pbprpt$ and every $s\in S$, we may therefore view $C(s)^\otimes$ as an algebra over $\Lpt\pbgmco(\1{S})^\otimes$ \textsl{via}
\begin{equation}
\label{eq:algebras-1S}%
\Lpt\pbgmco(\1{S})^\otimes\xto{\morgm{\blank}} C(\1{S})^\otimes\xto{\pi_s^*} C(s)^\otimes.
\end{equation}
\end{rmk}

\begin{lem}
\label{sta:algebras-1S-functor}%
The association $C\mapsto$(\ref{eq:algebras-1S}) underlies a functor
\begin{equation}
\label{eq:mu}%
\pbprpt\to\fun{S\op}{\calg{\PrL}_{\Lpt\pbgmco(\1{S})/}}.
\end{equation}
\end{lem}
\begin{proof}
Using $\pbprpt\simeq\pbprpt_{\Lpt\pbgmco/}$ and $S\op\simeq (S\op)_{\1{S}/}$, we may write~\eqref{eq:mu}, by adjunction, as the following composite:
\[
\pbprpt_{\Lpt\pbgmco/}\times (S\op)_{\1{S}/}\simeq
(\pbprpt\times S\op)_{(\Lpt\pbgmco,\1{S})/}
\xto{\on{ev}}\calg{\PrL}_{\Lpt\pbgmco(\1{S})^\otimes/}
\]
\end{proof}

\begin{rmk}
\label{rmk:LT}%
We are now ready to define the stabilization functor $\LT$ on presentable pointed pullback formalisms, as follows:
\begin{multline*}
\pbprpt\xto{\text{\eqref{eq:mu}}}\fun{S\op}{\calg{\PrL}_{\Lpt\pbgmco(\1{S})/}}\xto{\pblank[\T^{-1}]}\\
\fun{S\op}{\calg{\PrL}_{\Lpt\pbgmco(\1{S})[\T^{-1}]/}}
\to\fun{S\op}{\calg{\PrL}}
\end{multline*}
where the penultimate functor is the left adjoint of \Cref{sta:formal-inversion-set}, and the last one is the canonical forgetful functor.
\end{rmk}

However, in order to prove that this functor behaves as expected, we will need to impose a mild assumption on the set $\T$.
\begin{notn}
\label{notn:T-assumptions}%
For the rest of the section, we will assume that each object $x\in\T$ is symmetric.
\end{notn}

\begin{rmk}
\label{rmk:spectrum-objects}
Recall that an object $x$ of a symmetric monoidal \icat is called $n$-sym\-met\-ric (for $n\geq 2$) if the cyclic permutation of $x^{\otimes n}$ is homotopic to the identity~\cite[Remark~2.20]{Robalo_K-theory-and-the-bridge}.
And $x$ is called symmetric if it is $n$-symmetric for some $n\geq 2$.
The significance of this condition is that formally inverting a symmetric object $x$ amounts to passing to spectrum objects with respect to $x$, as we now recall.

Let $\cC^\otimes$ be a presentably symmetric monoidal \icat, and let $x\in\cC$ be a symmetric object.
Given a $\cC$-module $M$, its stabilization with respect to $x$, denoted by $\stab[x]{M}$, is the colimit in $\Mod[\cC]{\PrL}$ of the sequence
\[
M\xto{x\otimes\blank}M\xto{x\otimes\blank}\cdots
\]
The object $x$ acts invertibly on $\stab[x]{M}$ and the induced functor of $\cC$-modules
\[
M[x^{-1}]\isoto\stab[x]{M}
\]
is an equivalence~\cite[Corollary~2.22]{Robalo_K-theory-and-the-bridge}.
In particular, the underlying \icat of $M[x^{-1}]$ is equivalent to the limit in $\iCAT$ of
\[
M\xfrom{\hom(x,\blank)}M\xfrom{\hom(x,\blank)}\cdots
\]
Evaluation at the final object in this diagram defines a functor $\Omega_x^\infty:\stab[x]{M}\to M$, which admits a left adjoint $\Sigma_x^\infty$.
\end{rmk}

\addtocounter{equation}{4}
\begin{rmk}
\footnote{\label{fn:consistency}Remark~6.27 in the published version of this article is incorrect (as already pointed out there). To keep the numbering consistent, 6.27--6.30 are skipped. The proofs in the remainder of this section have been corrected. The rest of the article is unaffected.}
\label{rmk:suspension-functor}%
Continuing with \Cref{rmk:spectrum-objects}, we may, more generally, evaluate at the $n$th object from the left, thus defining a functor~$\Omega^{\infty-n}_x\colon \stab[x]{M}\to M$, with left adjoint~$\Sigma^{\infty-n}_x$.
It follows that
\[
\Sigma_x^{\infty-n}(m)\simeq \Sigma_x^\infty(m)\otimes x^{\otimes -n},
\]
where, as above, the right-hand side denotes the image of $\Sigma^\infty_x(m)\in \stab[x]{M}$ under the action of $x^{\otimes -n}\in\cC[x^{-1}]$.

Let now $\X$ be a set of symmetric objects in~$\cC$ and $X\in I_{\X}$.
Somewhat abusively we denote by $\Sigma^{\infty-X}_{\X}\colon M[X^{-1}]\to M[\X^{-1}]$ the canonical functor in~$\Mod[{\cC[X^{-1}]}]{\PrL}$.
\end{rmk}

\begin{cor}
\label{sta:stabilization-generators}%
The formal inversion $M[\X^{-1}]\in\PrL$ is generated under filtered colimits by objects of the form $(\Sigma^\infty_\X m)\otimes x^{\otimes -n}$, where $m\in M$, $n\in\Z_{\geq 0}$, and $x=\otimes(X)$ for some finite subset~$X\subset\X$.
\end{cor}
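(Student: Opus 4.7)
The approach is to identify a class $G$ of objects in $M[\X^{-1}]$ matching the statement, show that $G$ generates under all small colimits and is closed under finite colimits, and then invoke the standard fact that any small colimit in a presentable \icat can be rewritten as a filtered colimit of finite subcolimits.

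For \emph{generation under colimits}, combine \Cref{sta:formal-inversion-set} with \Cref{sta:stabilization-set} to identify $M[\X^{-1}]\simeq M\otimes_\cC \cC[\X^{-1}]$, then unfold the construction: $\cC[\X^{-1}] = \colim_{X\in I_\X}\cC[\otimes(X)^{-1}]$ in $\calg{\PrL}$, and by \Cref{rmk:spectrum-objects} each $\cC[\otimes(X)^{-1}]$ is the sequential colimit of $\cC\xto{\otimes(X)\otimes-}\cC\to\cdots$ in $\Mod[\cC]{\PrL}$. Reading off generators from these nested colimit presentations, $\cC[\X^{-1}]$ is generated as a $\cC$-module under colimits by the class $\{\otimes(X)^{\otimes n} : X \in I_\X,\, n \in \Z_{\leq 0}\}$. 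Since $M\otimes_\cC -$ preserves colimits, $M[\X^{-1}]$ is consequently generated under colimits by
\[
G \coloneqq \{\Sigma^{\infty}_{\X}(m)\otimes\otimes(X)^{\otimes n} : m\in M,\, X\in I_\X,\, n\in\Z_{\leq 0}\},
\]
which coincides with the class of the statement: the case $n = 0$ is subsumed by $X = \emptyset$, since $\otimes(\emptyset)$ is the monoidal unit and hence yields the same object for any $n$.

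For \emph{closure of $G$ under finite colimits}, consider a finite diagram $D\colon K\to M[\X^{-1}]$ whose vertices lie in $G$. Since $M[\X^{-1}] = \colim_{X\in I_\X}M[X^{-1}]$ is a filtered colimit in $\PrL$ and $K$ has only finitely many non-degenerate cells, the diagram $D$ can be realized inside some $M[X_0^{-1}]$ with $X_0 \in I_\X$ large enough. The limit formula in \Cref{rmk:spectrum-objects} then presents mapping spaces in $M[X_0^{-1}]\simeq\stab[\otimes(X_0)]{M}$ as filtered colimits of mapping spaces in $M$ (after tensoring by appropriate powers of $\otimes(X_0)$); choosing $N\in\Z_{\leq 0}$ sufficiently negative, the entire finite diagram $D$ is exhibited as the image under the colimit-preserving functor $\Sigma^{\infty}_{\X}(-)\otimes\otimes(X_0)^{\otimes N}\colon M\to M[\X^{-1}]$ of a finite diagram $D'$ in $M$, so $\colim D \simeq \Sigma^{\infty}_{\X}(\colim D')\otimes\otimes(X_0)^{\otimes N}\in G$.

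Combining the two preceding steps with the fact that any small colimit in a presentable \icat is a filtered colimit of finite subcolimits (any small simplicial set being a filtered colimit of its finite subcomplexes), every object of $M[\X^{-1}]$ is a filtered colimit of finite colimits of $G$-objects, hence by the closure property a filtered colimit of $G$-objects. The main obstacle is the closure step: one must carefully apply the limit formula of \Cref{rmk:spectrum-objects} to realize not only the vertices but all higher-dimensional cells of the finite diagram $D$ as coming from $M$ at a single sufficiently advanced stage of the nested sequential-filtered colimit structure of $M[\X^{-1}]$.
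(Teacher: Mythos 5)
There is a genuine gap, and it sits exactly where you flag the "main obstacle": the closure of $G$ under finite colimits. Your argument for it needs two lifting claims — that a finite diagram $D$ with vertices in $G$ "can be realized inside some $M[X_0^{-1}]$", and that it is then the image under $\Sigma^\infty_\X(-)\otimes\otimes(X_0)^{\otimes N}$ of a finite diagram in $M$ — and both implicitly use a filtered-colimit formula for mapping spaces that is false in this generality. Filtered colimits in $\PrL$ (or $\Mod[\cC]{\PrL}$) are computed as cofiltered limits along the right adjoints, not as colimits in $\iCAT$, so a finite diagram in the colimit does not automatically factor through a finite stage. Concretely, in $\stab[x]{M}$ one has $\map{\Sigma^\infty_x m_1}{\Sigma^\infty_x m_2}\simeq\map[M]{m_1}{\colim_k\hom(x^{\otimes k},x^{\otimes k}\otimes m_2)}$, and pulling the filtered colimit out of $\map[M]{m_1}{-}$ requires $m_1$ to be compact (or some substitute), which is nowhere assumed: $M$ is an arbitrary presentable $\cC$-module and $m\in M$ is arbitrary. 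Without such a formula a morphism between shifted suspension spectra need not desuspend to a morphism in $M$ at any finite twist, so $G$ need not be closed under finite colimits at all (already cofibers of maps of suspension spectra of non-compact objects can fail to be shifted suspension spectra), and the reduction "all colimits $+$ closure under finite colimits $\Rightarrow$ filtered colimits" cannot be run this way.

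The paper's proof avoids lifting diagrams altogether. It identifies the underlying \icat of $M[\X^{-1}]$ with the cofiltered limit $\lim_{X\in I_\X}\stab[\otimes(X)]{M}$ and invokes the counit formula of \cite[Proposition~6.3.3.6]{Lurie_higher-topos}: every object $Z$ of such a limit is the filtered colimit of $\Sigma^\infty\Omega^\infty$-type approximations coming from the stages. Applying this once over $I_\X$ and once more inside each $\stab[\otimes(X)]{M}$ (where every object is a filtered colimit of objects $\Sigma^{\infty-n}_{\otimes(X)}(m)$ with $m\in M$), and then rewriting $\Sigma^{\infty-n}$ via \Cref{rmk:suspension-functor}, exhibits $Z$ as an iterated filtered colimit of objects $(\Sigma^\infty_\X m)\otimes(X)^{\otimes -n}$ — no claim that these generators are closed under finite colimits is needed. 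Your first step (generation under all colimits via $M\otimes_\cC\cC[\X^{-1}]$) is plausible but ultimately rests on the same counit-type fact, so the honest repair of your proof essentially collapses into the paper's argument.
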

\begin{proof}
By~\cite[Lemma~6.3.3.6]{Lurie_higher-topos}, each object in~$M[\X^{-1}]$ is a filtered colimit of objects of the form $\Sigma^{\infty-X}_{\X}(m_X)$ where $m_X\in M[X^{-1}]$, $X\subseteq\X$ a finite subset.
Let $x=\otimes(X)$.
As recalled in \Cref{rmk:spectrum-objects}, the \icat underlying~$M[X^{-1}]\simeq M[x^{-1}]$ is equivalent to~$\stab[x]{M}$ so that a further application of~\cite[Lemma~6.3.3.6]{Lurie_higher-topos} expresses~$m_X$ as a filtered colimit of objects of the form $\Sigma^{\infty-n}_{x}(m)$, for some $n\geq 0$, $m\in M$.
As $\Sigma^{\infty-X}_{\X}$ preserves (in particular, filtered) colimits we deduce that $M[\X^{-1}]$ is generated under filtered colimits by objects of the form
\[
\Sigma^{\infty-X}_{\X}
\left(
\Sigma^{\infty-n}_{x}(m)
\right)
\simeq
\Sigma^{\infty-X}_{\X}
\left(
\Sigma_{x}^\infty(m)\otimes x^{\otimes -n}
\right)
\simeq
\Sigma^\infty_{\X}(m)\otimes x^{\otimes -n},
\]
where we used $\cC[x^{-1}]$-linearity of~$\Sigma^{\infty-X}_{\X}$ in the last step.
This completes the proof.
\end{proof}

After these preparations we can now prove the main result of this subsection.
\begin{prop}
\label{sta:pb-pr-pt-LT}%
The left adjoint of \Cref{sta:pbcost-presentable} fits into an essentially commutative diagram in~$\ICAT$
\begin{equation}
\label{eq:pb-pr-pt-LT}%
\begin{tikzcd}
\pbcopt
\ar[r]
&
\pbcost
\\
\pbprpt
\ar[r, "\LT"]
\ar[u, hookrightarrow]
&
\pbprst
\ar[u, hookrightarrow]
\\
\pblprpt
\ar[u, hookrightarrow]
\ar[r, "\LT"]
&
\pblprst
\ar[u, hookrightarrow]
\end{tikzcd}
\end{equation}
where the vertical functors are the canonical full inclusions.
\end{prop}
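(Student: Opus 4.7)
The strategy parallels those of \Cref{sta:pb-pr-Lpt} and \Cref{sta:pb-pr-localization}. Let $L$ denote the left adjoint of \Cref{sta:pbcost-presentable}, with unit $\eta : \id \to L$. For $C \in \pbprpt$, I will show that $\LT C$ as defined in \Cref{rmk:LT} belongs to $\pbprst$ and that the canonical morphism $C \to \LT C$ is universal among morphisms from $C$ into objects of $\pbcost$. Mutually inverse comparison maps $\theta : L(C) \rightleftarrows \LT C : \zeta$, obtained from the respective universal properties of $L(C)$ and $\LT C$, then yield the top square; the bottom square reduces to checking that $\LT$ preserves $\lcl$-locality.

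To verify that $\LT C$ is a presentable pullback formalism, recall from \Cref{sta:formal-inversion-set} that pointwise $\LT C(s) \simeq C(s) \otimes_{\Lpt\pbgmco(\1{S})^\otimes} \Lpt\pbgmco(\1{S})[\T^{-1}]^\otimes$ as algebras in $\PrL$. For a $P$-morphism $p : s' \to s$, the pullback $p^* : C(s) \to C(s')$ is linear over $\Lpt\pbgmco(\1{S})^\otimes$ by \Cref{rmk:modules-over-final}, so base-changing yields a symmetric monoidal, colimit-preserving functor $p^* : \LT C(s) \to \LT C(s')$ admitting a left adjoint $p_\sharp$. Passing to left adjoints in the evident equivalence $p^* \circ \Omega_\T^\infty \simeq \Omega_\T^\infty \circ p^*$ gives $p_\sharp \circ \Sigma_\T^\infty \simeq \Sigma_\T^\infty \circ p_\sharp$. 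Using the generators of \Cref{sta:stabilization-generators}, the base change and projection formulas for $\LT C$ reduce to the corresponding statements for $C$ combined with the automatic projection formula for the invertible factors $x^{\otimes n}$. $\T$-stability is immediate by construction.

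For the universal property, let $D \in \pbcost$ and $\phi : C \to D$. Evaluation at $\1{S}$ yields an algebra morphism $\Lpt\pbgmco(\1{S})^\otimes \to D(\1{S})^\otimes$ in $\calg{\PrL}_{\Lpt\pbgmco(\1{S})/}$ which inverts each $x \in \T$, so by \Cref{sta:formal-inversion-set} it factors essentially uniquely through $\Lpt\pbgmco(\1{S})[\T^{-1}]^\otimes$. Combining this with base change along $\pi_s^*$ and naturality in $s$ produces an essentially unique factorization $C \to \LT C \to D$ in $\pbprpt$. Specializing to $D = L(C)$ and $D = \LT C$ yields $\zeta$ and $\theta$; the composites $\zeta\theta$ and $\theta\zeta$ are homotopic to identities by the same universal-property ping-pong as in \Cref{sta:pb-pr-localization}.

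Finally, for the bottom square, suppose $C$ is also $\lcl$-local. Fix $u : I^\addfinal \to P_\amalg$ in $\lcl$ with $u_\infty = s$. By \Cref{rmk:pb-local-explicit}, locality of $\LT C$ with respect to $u$ is the condition that the counit $u_\sharp u^* M \to M$ be an equivalence for all $M \in \LT C(s)$. Since both $u_\sharp$ and $u^*$ preserve colimits, the class of such $M$ is closed under colimits, and by \Cref{sta:stabilization-generators} it suffices to treat generators $M = \Sigma_\T^\infty(M') \otimes x^{\otimes n}$ with $M' \in C(s)$ and $n < 0$. Pulling the invertible $x^{\otimes n}$ out via the projection formula in $\LT C$ and invoking $u_\sharp \circ \Sigma_\T^\infty \simeq \Sigma_\T^\infty \circ u_\sharp$ reduces this to the locality of $C$ at $M'$. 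The main obstacle is precisely this locality-preservation step: its rigorous execution depends on establishing several commutations between the suspension-style functors $\Sigma_\T^\infty$ and the $p_\sharp$ functors of the pullback-formalism structure, which must be extracted carefully from the layered construction of $\LT$ in \Cref{rmk:LT}.
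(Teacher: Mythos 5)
Your overall strategy is the same as the paper's: show that $\LT C$ as constructed in \Cref{rmk:LT} lands in $\pbprst$, identify it with the abstract reflection by the ping-pong of \Cref{sta:pb-pr-localization}, and reduce $\lcl$-locality of $\LT C$ to that of $C$ via the generators of \Cref{sta:stabilization-generators}. The genuine gap is in the adjointability step, which you dispose of in one sentence: base change along $\Lpt\pbgmco(\1{S})^\otimes\to\Lpt\pbgmco(\1{S})[\T^{-1}]^\otimes$ does give a symmetric monoidal, colimit-preserving $p^*:\LT C(s)\to\LT C(s')$, hence a \emph{right} adjoint, but it does not produce the \emph{left} adjoint $p_\sharp$, and the ``evident equivalence'' $p^*\circ\Omega^\infty_\T\simeq\Omega^\infty_\T\circ p^*$ is precisely the non-evident Beck--Chevalley statement on which everything hinges. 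This is exactly where the projection formula for $C$ enters in the paper's proof: for each $x\in\T$ the square formed by $p^*$ and the functors $\pi_s^*x\otimes\blank$, $\pi_{s'}^*x\otimes\blank$ is right adjointable by~\cite[Remark~4.7.4.14]{Lurie_higher-algebra} (this \emph{is} the projection formula), one converts it into a diagram $\Delta^1\to\fun{Q_\T^{\opname}}{\PrR}$ using~\cite[Corollary~4.7.4.18]{Lurie_higher-algebra}, and uses that $\PrR\subset\iCAT$ is closed under limits to conclude that the induced $p^*$ on the limits is itself a right adjoint, with the commutations $p^*\Omega_\X^{\infty-f}\simeq\Omega_{\X'}^{\infty-f}p^*$ and $p_\sharp\Sigma^{\infty-f}_{\X'}\simeq\Sigma^{\infty-f}_\X p_\sharp$ built into the construction. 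You acknowledge at the end that ``several commutations \ldots must be extracted carefully,'' but you file this under the locality-preservation step; in fact the same missing argument is what produces $p_\sharp$ in the first place, so the heart of the verification that $\LT C$ is a pullback formalism is left open (and pointedness of $\LT C$, which the paper checks via the limit description along left adjoint transition functors, is not addressed at all).

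A smaller problem: your universal-property argument invokes \Cref{sta:formal-inversion-set}, a statement about $\calg{\PrL}$, for an arbitrary target $D\in\pbcost$ and in particular for $D=L(C)$; objects of $\pbcost$ are only cocomplete, and $L(C)$ is not known to be presentable, so the lemma does not apply as stated. The paper instead runs the comparison exactly as in \Cref{sta:pb-pr-localization}, the only additional input being that in any $\T$-stable pullback formalism the set $\pi_s^*[\T]$ consists of invertible objects (being the image of invertibles under the symmetric monoidal functor $\pi_s^*$), which is what lets the unit $\eta_C$ factor through $\LT C$ without appealing to the presentable inversion statement for a possibly non-presentable target.
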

\begin{proof}
Let $C\in\pbprpt$ and denote by $D:S\op\to\calg{\PrL}$ the functor $\LT C$.
We need to show that $D$ is
\begin{enumerate}[(1)]
\item a pullback formalism,
\item pointed, and
\item $\T$-stable.
\end{enumerate}
By definition and \Cref{sta:formal-inversion-set}, $D(\1{S})^\otimes$ is the formal inversion of $C(\1{S})^\otimes$ with respect to the set $[\T]$.
It is then clear that $D$ will be $\T$-stable once we show the other two properties.

For these the description of formal inversion as a stabilization will be essential.
Fix $s\in S$ and set $\X=\pi_s^*[\T]\subseteq C(s)$.
To prove that $D(s)$ is pointed we first assume $\T$ is finite and set $x=\otimes(\X)$.
By \Cref{rmk:spectrum-objects} the underlying presentable \icat $D(s)\simeq\stab[x]{C(s)}$ is the colimit in~$\PrL$ of a sequence whose terms are~$C(s)$ and therefore pointed.
By~\cite[Proposition~4.8.2.11]{Lurie_higher-algebra}, the \subicat $\Prpt\subset\PrL$ is closed under colimits.
Hence $D(s)$ is pointed.
In the general case the underlying presentable \icat
\[
D(s)=\colim_{X\in I_{\X}}C(s)[X^{-1}]
\]
is the colimit in~$\PrL$ of a diagram whose vertices we just showed are pointed.
It follows that $D(s)$ is pointed as well.

To prove adjointability, let us be given $p:s'\to s$ in $P$.
As before we let $\X=\pi_s^*[\T]$, and $\X'=\pi_{s'}^*[\T]$.
We first assume $\T$ is finite and set $x=\otimes(\X)$, $x'=\otimes(\X')$.
Consider the diagram $\Z_{\geq 0}\times\Delta^1\to\PrL$, which for each $n\geq 0$ is given by $p^*\colon C(s)\to C(s')$, and where the transition $n\to n+1$ is tensoring with~$x$ and~$x'$, respectively.
By the projection formula, each square
\[
\begin{tikzcd}
C(s)
\ar[r, "x\otimes\blank"]
\ar[d, "p^*"]
&
C(s)
\ar[d, "p^*"]
\\
C(s')
\ar[r, "x'\otimes\blank"]
&
C(s')
\end{tikzcd}
\]
in this diagram is right adjointable~\cite[Remark~4.7.4.14]{Lurie_higher-algebra}.
Passing to right adjoints as in~\cite[Corollary~4.7.4.18.(3)]{Lurie_higher-algebra}, we obtain a diagram $\Delta^1\to\fun[\textup{LAd}]{\Z_{\geq 0}\op}{\iCAT}$, which in fact belongs to $\Delta^1\to\fun{\Z_{\geq 0}\op}{\PrR}$.
Using that $\PrR\subset\iCAT$ is closed under limits, we find that the $\Z_{\geq 0}\op$-limit classifies an edge $p^*:D(s)\to D(s')$ which is a right adjoint.
We denote the left adjoint by $p_\sharp$, as usual.
For any $n\geq 0$ we have $p^*\Omega^{\infty-n}_x\simeq \Omega_{x'}^{\infty-n} p^*$ as functors $D(s)\to C(s')$.
Therefore we also have $p_\sharp\Sigma^{\infty-n}_{x'}\simeq\Sigma^{\infty-n}_x p_\sharp$ as functors $C(s')\to D(s)$.

Now we treat the general case.
Consider the diagram $I_\T\times\Delta^1\to\PrL$ which for each $T\in I_\T$ is given by $p^*\colon C(s)[(\pi_s^*[T])^{-1}]\to C(s')[(\pi_{s'}^*[T])^{-1}]$, with the canonical transition functors for $T\subseteq T'$.
We claim that the square
\[
\begin{tikzcd}
C(s)[(\pi_s^*[T])^{-1}]
\ar[r]
\ar[d, "p^*"]
&
C(s)[(\pi_s^*[T'])^{-1}]
\ar[d, "p^*"]
\\
C(s')[(\pi_{s'}^*[T])^{-1}]
\ar[r]
&
C(s')[(\pi_{s'}^*[T'])^{-1}]
\end{tikzcd}
\]
is right adjointable.
For this we may replace~$C$ by~$L_{T}C$ (as we will only need adjointability already established in the finite case) and therefore assume~$T=\emptyset$.
Then the claim follows from the equivalence $p^*\Omega^{\infty}_x\simeq \Omega_{x'}^{\infty} p^*$ shown above, where $x=\pi_s^*[\otimes(T')]$ and $x'=\pi_{s'}^*[\otimes(T')]$.
Once this is established the argument proceeds as in the finite case.

To prove base change and the projection formula for $D$, we use \Cref{sta:stabilization-generators}.
Since all functors in sight preserve small colimits and commute with the relevant `suspension' functors~$\Sigma^{\infty-X}_\X$ and $\Sigma^{\infty-n}_{X}$, the base change and projection formula follow from their analogues for~$C$.

The commutativity of the upper square in~(\ref{eq:pb-pr-pt-LT}) follows as in \Cref{sta:pb-pr-localization}, once one notices that for each $\T$-stable pullback formalism $C$ and for each $s\in S$, the set $\pi_s^*[\T]\subset C(s)$ is the image of invertible objects $[\T]\subset C(\1{S})$ under the symmetric monoidal functor $\pi_s^*$ and hence consists entirely of invertible objects too.

We turn to the lower square of~(\ref{eq:pb-pr-pt-LT}), and it now suffices to prove that if $C$ is $\lcl$\nobreakdash-local then so is $\LT C$.
By \Cref{rmk:pb-local-explicit}, it suffices to prove that the morphism~(\ref{eq:local-explicit}) is an equivalence in~$\LT C$.
This follows again from the corresponding property of $C$, by \Cref{sta:stabilization-generators} and the fact that all functors appearing in~(\ref{eq:local-explicit}) commute with colimits and the relevant `suspension' functors~$\Sigma^{\infty-X}_{\X}$ and~$\Sigma^{\infty-n}_X$.
\end{proof}

\begin{cor}
The pullback formalism $\LT\Lpt L_{\lcl}\pbgmco$ is an initial object of both $\pblprst$ and $\pblcost$.
\end{cor}

\section{Stable motivic homotopy theory}
\label{sec:SH}
In this final section, we apply the results obtained in the previous sections to the context of main interest, and prove our main theorem stated in the introduction (as well as a more general version).
For the reader's convenience, we start in \Cref{sec:summary} by summarizing the discussion up to this point, recall the notion of a coefficient system in \Cref{sec:CoSy}, and then prove our main theorem in \Cref{sec:main-thm}.

\subsection{Summary}
\label{sec:summary}
\begin{notn}
\label{notn:all}%
We fix the following notation and hypotheses:
\begin{itemize}
\item $S$, a small ordinary category which is finitely complete, with final object~$\1{S}$;
\item $P\subseteq S$, a subcategory containing all isomorphisms, and stable under pullbacks along all morphisms of~$S$;
\item $\lcl$, a (possibly large) set of diagrams $u:I^\addfinal\to P_\amalg$ satisfying the conditions of \Cref{notn:lcl-assumptions};
\item $\T\subset \psh{P_{\1{S}}}_\pt$, a small set of pointed objects.\footnote{Recall that for $s\in S$, $P_s$ denotes the full subcategory of $S_{/s}$ spanned by morphisms $p:s'\to s$ in $P$.}
\end{itemize}
\end{notn}

\begin{cns}
\label{cns:initial-objects}%
Fix $s\in S$.
We perform the following constructions:
\begin{enumerate}
\item Start with the category $P_s$, endowed with the Cartesian symmetric monoidal structure (which is the fiber product over $s$ in $S$).
\item Take its free cocompletion $\psh{P_s}$ endowed with the Day convolution product induced from $P_s^\times$. This coincides with the pointwise symmetric monoidal structure in spaces.
\item Restrict to the full \subicat $\psh[\lcl]{P_s}$ of those presheaves which are local with respect to morphisms
\[
\colim_{i\in I}u_i\to u_\infty
\]
where $u\in\lcl$ and $u_\infty\to s\in P$.
This inherits a presentably symmetric monoidal structure, characterized by making the localization functor $\psh{P_s}\to\psh[\lcl]{P_s}$ symmetric monoidal.
\item Pass to pointed objects in $\psh[\lcl]{P_s}$.
The presentably symmetric monoidal structure on $\psh[\lcl]{P_s}_\pt$ is characterized by making the functor $\pblank_+:\psh[\lcl]{P_s}\to\psh[\lcl]{P_s}_\pt$ symmetric monoidal.
\item Finally, $\otimes$-invert the objects $\T_s:=\pi^*_s\T$ in $\calg{\PrL}$.
The presentably symmetric monoidal structure on $\psh[\lcl]{P_s}_\pt[\T_s^{-1}]$ has the characterizing property of making the functor $\Sigma^\infty_{\T_s}:\psh[\lcl]{P_s}_\pt\to\psh[\lcl]{P_s}_\pt[\T_s^{-1}]$ symmetric monoidal.
If $\T$ consists of a single symmetric object then, on underlying $\psh[\lcl]{P_s}_\pt$-modules, the formal inversion is the stabilization with respect to tensoring with that object.
\end{enumerate}
\end{cns}

With \Cref{notn:all} and \Cref{cns:initial-objects}, we can now summarize the sequence of results up to this point as follows.
\begin{thrm}
\label{thm:summary}%
The following describes a sequence of adjunctions between very large presentable \subicats of $\fun{S\op}{\calg{\iCAT}}
$, their initial objects, and the values of these initial objects on a typical $s\in S$:
\[
\begin{tikzcd}[row sep=small]
\pb
\ar[r, hookleftarrow]
\ar[r, bend left]
&
\pbco
\ar[r, bend left]
\ar[r, hookleftarrow]
&
\pbl
\ar[r, bend left]
\ar[r, hookleftarrow]
&
\pblcopt
\ar[r, bend left]
\ar[r, hookleftarrow]
&
\pblcost
\\
\pbgm
\ar[r, mapsto]
&
\pbgmco
\ar[r, mapsto]
&
\Llcl\pbgmco
\ar[r, mapsto]
&
\Lpt\Llcl\pbgmco
\ar[r, mapsto]
&
\LT\Lpt\Llcl\pbgmco
\\
P_s
&
\psh{P_s}
&
\psh[\lcl]{P_s}
&
\psh[\lcl]{P_s}_\pt
&
\psh[\lcl]{P_s}_\pt[\T_s^{-1}]
\end{tikzcd}
\]
Moreover, the last three right adjoints are fully faithful, thus describing reflexive \subicats.
\end{thrm}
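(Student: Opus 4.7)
The plan is to assemble \Cref{thm:summary} entirely from the results established in the preceding sections, so the proof is essentially a bookkeeping exercise chaining the earlier propositions together. First, the sequence of adjunctions is the concatenation of five adjunctions established in \Cref{sta:pb-presentable}, \Cref{sta:pbco-presentable}, \Cref{sta:pbl-presentable}, \Cref{sta:pb-pointed-presentable}, and \Cref{sta:pbcost-presentable}. In each case, the right adjoint is the stated inclusion, and the left adjoint is produced either by the adjointable-squares technique or by the localization-of-modules technique of~\cite{Robalo_K-theory-and-the-bridge}.

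For the initial objects, the plan is to use that left adjoints preserve colimits, and in particular initial objects. \Cref{sta:pbgm-initial} identifies the initial object of $\pb$ as $\pbgm$. Applying each left adjoint in turn, one obtains the remaining initial objects $\pbgmco$, $\Llcl\pbgmco$, $\Lpt\Llcl\pbgmco$, $\LT\Lpt\Llcl\pbgmco$ in $\pbco$, $\pbl$, $\pblcopt$, $\pblcost$ respectively. At each stage, I would combine this with the explicit formula for the left adjoint given in \Cref{sta:cocompletion}, \Cref{sta:pb-pr-localization}, \Cref{sta:pb-pr-Lpt}, and \Cref{sta:pb-pr-pt-LT}. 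A small subtlety is that the explicit descriptions of the left adjoints to $\pb\supseteq\pbl$, $\pbco\supseteq\pbcopt$, $\pbcopt\supseteq\pbcost$ are only known when restricted to presentable pullback formalisms, so one must check that the initial object at each stage is presentable. This follows from \Cref{exa:pbgmco} for $\pbgmco$ and is preserved by the subsequent left adjoints.

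The explicit values at $s\in S$ then follow directly: $\pbgm(s)=P_s$ by \Cref{defn:pb-gm}; $\pbgmco(s)=\psh{P_s}$ with the Day convolution structure by \Cref{exa:pbgmco}; $\Llcl\pbgmco(s)=\psh[\lcl]{P_s}$ by combining \Cref{sta:pb-pr-localization} with \Cref{sta:pbgm-localization}, the latter reducing the generating set of local morphisms in $K_{\pbgmco(s)}$ to those of the stated form $\colim_{i\in I} u_i\to u_\infty$; $\Lpt\Llcl\pbgmco(s)=\psh[\lcl]{P_s}_\pt$ from the pointwise description of $\Lpt$ in \Cref{sta:pb-pr-Lpt}; and $\LT\Lpt\Llcl\pbgmco(s)=\psh[\lcl]{P_s}_\pt[\T_s^{-1}]$ from \Cref{sta:pb-pr-pt-LT} together with \Cref{sta:formal-inversion-set} and \Cref{sta:stabilization-set}, where $\T_s=\pi_s^*\T$.

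Finally, the last three right adjoints are fully faithful by construction: $\pbl\into\pbco$ is a full \subicat by \Cref{defn:pb-local}, $\pblcopt\into\pbl$ is a full \subicat by \Cref{defn:pb-pointed} combined with \Cref{rmk:pb-co-pt-full}, and $\pblcost\into\pblcopt$ is a full \subicat by \Cref{defn:pb-stable}. (The first inclusion $\pbco\into\pb$ is not fully faithful because morphisms in $\pbco$ must preserve colimits.) The only real obstacle is the careful tracking of which explicit formulas for the left adjoints apply on which sub-\icats of the domain, and the verification that each initial object lies in the appropriate sub-\icat to which the explicit formula applies; both are handled uniformly by presentability.
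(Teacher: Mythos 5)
Your proposal correctly identifies the overall strategy — \Cref{thm:summary} is assembled from the preceding presentability/adjunction results, initial objects are traced through the left adjoints (which preserve them), and the explicit pointwise descriptions come from the identifications in \Cref{sta:cocompletion}, \Cref{sta:pb-pr-localization}, \Cref{sta:pb-pr-Lpt}, and \Cref{sta:pb-pr-pt-LT}. This is exactly how the paper intends it to be read, so the spirit is right.

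There is one imprecision worth flagging. The chain in the theorem has four adjunctions, and the third and fourth of them are $\pbl\rightleftarrows\pblcopt$ and $\pblcopt\rightleftarrows\pblcost$. But \Cref{sta:pb-pointed-presentable} and \Cref{sta:pbcost-presentable} establish the adjunctions $\pbco\rightleftarrows\pbcopt$ and $\pbcopt\rightleftarrows\pbcost$, not the ones in the chain. To get the chain, one still needs to observe that $\pblcopt = \pbl\times_{\pbco}\pbcopt$ (by \Cref{defn:pb-pointed}) and $\pblcost = \pblcopt\times_{\pbcopt}\pbcost$, and that a pullback of very large presentable \icats along right adjoints is again presentable with right adjoint projections — the same Cartesian-square argument the paper uses repeatedly (e.g., in the proof of \Cref{sta:pb-presentable}). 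Your proposal counts ``five adjunctions'' (\Cref{sta:pb-presentable} giving $\pb\into\fun{S\op}{\calg{\iCAT}}$, which is not in the displayed chain at all), and lists inclusions $\pb\supseteq\pbl$, $\pbco\supseteq\pbcopt$, $\pbcopt\supseteq\pbcost$ that also do not match the chain, so I suspect you have conflated the chain in the theorem with the sequence of propositions proving presentability. The gap is small and easily filled, but as written the citations do not directly deliver the third and fourth arrows.

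Two further small points: for the last step you also need \Cref{notn:T-assumptions} (each $x\in\T$ is $n$-symmetric for some $n\geq 2$), which is a running hypothesis required for \Cref{sta:pb-pr-pt-LT}; and the full-faithfulness discussion is correct, including the observation that the first inclusion $\pbco\into\pb$ is not full.
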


\subsection{Coefficient systems}
\label{sec:CoSy}

\begin{notn}
\label{not:SH}%
We fix the following notation:
\begin{itemize}
\item $B$, a noetherian scheme of finite Krull dimension;
\item $\sch{B}$, the category of finite type $B$-schemes, with the Cartesian monoidal structure.
\end{itemize}
\end{notn}

In the introduction we mentioned coefficient systems as a formalization of six-functor formalisms.
We now supply the definition.
For more details on this notion we refer to~\cite[\S\,5]{Drew_motivic-hodge} and~\cite{gallauer:intro-6ff}.
\begin{defn}
\label{defn:CoSy}%
A functor $C:\schop{B}\to\calg{\iCATst}$ taking values in symmetric monoidal stable \icats and exact symmetric monoidal functors is called a \emph{coefficient system} if it satisfies the following properties.
\begin{enumerate}[(1)]
\item
\begin{enumerate}
\item \textbf{(Pushforwards)} For every $f:Y\to X$ in $\sch{B}$, the
pullback functor $f^*$ admits a right adjoint $f_*:C(Y)\to C(X)$.
\item \textbf{(Internal homs)} For every $X\in\sch{B}$, the symmetric monoidal structure on $C(X)$ is closed.
\end{enumerate}

\item For each smooth morphism $p:Y\to X\in\sch{B}$, the functor $p^*:C(X)\to C(Y)$ admits a left adjoint $p_\sharp$, and:
\begin{enumerate}
\item \textbf{(Smooth base change)} For each cartesian square
\[
\begin{tikzcd}
Y'
\ar[r, "p'" above]
\ar[d, "f'" left]
&
X'
\ar[d, "f" right]
\\
Y
\ar[r, "p" above]
&
X
\end{tikzcd}
\]
in $\sch{B}$, the exchange transformation $p'_\sharp (f')^*\to f^*p_\sharp$ is an equivalence.
\item \textbf{(Smooth projection formula)} The exchange transformation
\[
p_\sharp(p^*(-)\otimes -)\to -\otimes p_\sharp(-)
\]
is an equivalence of functors~$C(X)\times C(Y)\to C(Y)$.
\end{enumerate}
\item \textbf{(Localization)} For each closed immersion $Z\into X$ in $\sch{B}$ with complementary open immersion $j:U\into X$, the square
\[
\begin{tikzcd}
C(Z)
\ar[r, "{i_*}"]
\ar[d]
&
C(X)
\ar[d, "j^*"]
\\
0
\ar[r]
&
C(U)
\end{tikzcd}
\]
is cartesian in $\iCATst$.
\item For each $X\in\sch{B}$, if $\pi_{\affine^1}:\affine^1_X\to X$ denotes the canonical projection with zero section $s:X\to\affine^1_X$, then:
\begin{enumerate}
\item \textbf{($\affine^1$-homotopy invariance)} The functor $\pi_{\affine^1}^*:C(X)\to C(\affine^1_X)$ is fully faithful.
\item \textbf{($\mathrm{T}$-stability)} The composite $\pi_{\affine^1,\sharp} s_*:C(X)\to C(X)$ is an equivalence.
\end{enumerate}
\end{enumerate}
A morphism of coefficient systems is a natural transformation $\phi:C\to C'$ such that for each smooth morphism $p:Y\to X$ in $\sch{B}$, the exchange transformation
\[
p_\sharp\phi_Y\to \phi_X p_\sharp
\]
is an equivalence.
This defines a very large \subicat $\CoSy{B}\subset\fun{\schop{B}}{\calg{\iCATst}}$.
\end{defn}

\begin{rmk}
\label{rmk:CoSy}%
Ayoub in~\cite[D\'{e}finitions~1.4.1, 2.3.1, and~2.3.50]{Ayoub_six-operationsI} introduced a similar set of axioms by the name of a `closed symmetric monoidal stable homotopy $2$-functor', and Cisinski-D\'eglise use a closely related notion of `motivic triangulated categories' in~\cite[Definition~2.4.45]{Cisinski-Deglise_mixed-motives}.
The main difference between these and our coefficient systems is that the former take values in tensor triangulated categories.
The \icategorical version we use here was introduced in~\cite[\S\,5]{Drew_motivic-hodge}, to which we refer the reader for a more in-depth discussion.
In particular, we highlight the following two points explained there.
Given a functor $C:\schop{B}\to\calg{\iCATst}$, the following are equivalent:\footnote{The implication \ref{item:hoC}$\Rightarrow$\ref{item:C} is proved there under the additional assumption that $p_\sharp$ exists (on the level of \icats) for every smooth morphism $p$. But this is automatic, by~\cite[Theorem~3.3.1]{Nguyen-Raptis-Schrade:adjoints}.}
\begin{enumerate}[(i)]
\item
\label{item:C}
The functor $C$ is a coefficient system.
\item
\label{item:hoC}
The functor $\ho(C)$ obtained from $C$ by passing (pointwise) to the homotopy category is a closed symmetric monoidal stable homotopy $2$-functor.
\end{enumerate}
Similarly, for a natural transformation $\phi:C\to C'$ between two coefficient systems, the following are equivalent:
\begin{enumerate}[(i')]
\item
The natural transformation $\phi$ defines a morphism of coefficient systems.
\item
The natural transformation $\ho(\phi)$ defines a morphism of closed symmetric monoidal stable homotopy $2$-functors.
\end{enumerate}
\end{rmk}

\begin{defn}
\label{defn:CoCoSy}%
A \emph{cocomplete coefficient system} is a functor $C:\schop{B}\to\calg{\iCATcost}$ taking values in symmetric monoidal cocomplete stable \icats and cocontinuous symmetric monoidal functors whose composite with the forgetful functor $\iCATcost\to\iCATst$ is a coefficient system.
A morphism of cocomplete coefficient systems is a natural transformation between cocomplete coefficient systems whose composite with the forgetful functor $\iCATcost\to\iCATst$ is a morphism of coefficient systems.
This defines a \subicat $\CoCoSy{B}\subseteq\fun{\schop{B}}{\calg{\iCATcost}}$.
\end{defn}

\begin{rmk}
\label{rmk:CoSy-presentable}%
Restricting to cocomplete coefficient systems~$C$ which take values in presentably symmetric monoidal \icats $\calg{\PrL}$, we see that pushforwards and internal homs exist automatically.
Most cocomplete coefficient systems occurring in nature are presentable in that sense.
If $C$ furthermore factors through $\calg{\PrL_\omega}$, where $\PrL_\omega\subset\PrL$ is the \subicat of compactly generated presentable \icats and functors that preserve compact objects, then, for every $f:X\to Y$, the functor $f_*$ admits a further right adjoint.
It follows that the underlying stable homotopy 2-functor $\ho(C)$ is a motivic triangulated category in the sense of~\cite{Cisinski-Deglise_mixed-motives}.
\end{rmk}

\begin{rmk}
\label{rmk:CoSy-6ff}%
As remarked in the introduction, we take here coefficient systems as stand-ins for six-functor formalisms.
This is justified by the main results of~\cite{Ayoub_six-operationsI,Ayoub_six-operationsII} which show that closed symmetric monoidal stable homotopy $2$-functors afford Grothendieck's six operations satisfying many of the relations familiar from the $\ell$-adic theory, at least on quasi-projective $B$-schemes, see for example~\cite[Scholie~1.4.2]{Ayoub_six-operationsI}.
And \cite{Cisinski-Deglise_mixed-motives} establish that motivic triangulated categories afford the six operations with similar relations on all finite type $B$-schemes, see for example~\cite[Theorem~2.4.50]{Cisinski-Deglise_mixed-motives}.

The six functors and many of the relations just alluded to can be lifted to the level of \icats.
We refer to~\cite[\S\,5]{Drew_motivic-hodge} for details.
\end{rmk}
\subsection{Main result}
\label{sec:main-thm}

\begin{notn}
\label{notn:cspb}%
We now connect coefficient systems and pullback formalisms, using the following conventions (\cf \Cref{notn:all,not:SH}):
\begin{itemize}
\item $S=\sch{B}$;
\item $P=\sch{B}^{\mathrm{sm}}\subset \sch{B}=S$ the wide subcategory of smooth morphisms;
\item $\lcl=\lcl_{\affine^1_B}\cup\lcl_{\hat{\nis}}$ as in \Cref{exa:local-interval} and \Cref{exa:local-hyperdescent};
\item $\T$ the singleton set consisting of $(\projective_B^1,\infty)$, the pointed projective line.
\end{itemize}
\end{notn}

\begin{rmk}
\label{rmk:construction-SH}%
Starting from \Cref{exa:Sm_X} it follows from~\cite[\S\,2.4]{Robalo_K-theory-and-the-bridge} that the pullback formalism $\SHH:=\LT\Lpt\Llcl\pbgmco$ associated with these data is the \icategorical version of Morel-Voevodsky's stable $\affine^1$-homo\-topy theory.
In particular, for each $X\in\sch{B}$, the presentably symmetric monoidal \icat $\SHH(X)^\otimes$ underlies Morel-Voevodsky's stable $\affine^1$-model category.
\end{rmk}

\begin{rmk}
\label{rmk:pb-cs-stability}%
Since $(\projective_B^1,\infty)$ in $\SHH(B)$ is the (smash) tensor product of the $1$-sphere and $(\mathbb{G}_{\textup{m},B},1)$, it follows that every object of $\pblcost$ automatically takes values in stable \icats, see~\cite[Corollary~2.39]{Robalo_K-theory-and-the-bridge}.
Moreover, as shown in~\cite[Corollary~2.4.19]{Cisinski-Deglise_mixed-motives}, in presence of the other axioms, the $\T$-stability condition identifies with the $\mathrm{T}$-stability axiom in \Cref{defn:CoSy}.
\end{rmk}

\begin{prop}
\label{sta:cosy-pb}%
The forgetful functor $\CoCoSy{B}\to\fun{\sch{B}\op}{\calg{\iCATco}}$ factors through a fully faithful embedding:
\[
\CoCoSy{B}\into
\pb(\sch{B},\sch{B}^{\mathrm{sm}})^{\operatorname{c,pt}}_{\lcl_{\affine^1_B}\cup\lcl_{\hat{\nis}},(\projective^1_B,\infty)}
\]
\end{prop}
\begin{proof}
The only non-trivial statement (\cf \Cref{rmk:pb-cs-stability}) is that if $C$ is a cocomplete coefficient system then it satisfies non-effective Nisnevich hyperdescent.
If $C$ is associated with a stable combinatorial fibered model category, this is~\cite[Corollary~3.3.5]{Cisinski-Deglise_mixed-motives}.
As remarked there~\cite[Footnote~51]{Cisinski-Deglise_mixed-motives}, the proof works more generally.
For completeness, we supply the argument here.

Let $u:\sscat^{+,\opname}\to \sch{B}$ be a Nisnevich-hypercover $U_\bullet\to U$ and $M\in C(U)$.
By \Cref{rmk:pb-local-explicit}, it suffices to prove that the morphism~(\ref{eq:local-explicit}) is an equivalence.
By the Yoneda lemma, this is equivalent to the functor $\map[C(U)]{\blank}{N}:C(U)\op\to\Spc$ taking~(\ref{eq:local-explicit}) to an equivalence, for every $N\in C(U)$.
Now, consider the composite
\[
F_{U,M,N}:\smop{U}\xto{\morgm{\blank}}C(U)\op\xto{\blank\otimes M}C(U)\op\xto{\map[C(U)]{\blank}{N}}\Spc,
\]
where the first functor is induced by the essentially unique morphism of pullback formalisms (\Cref{sta:pbgm-initial}), and sends $p:U'\to U$ to $p_\sharp p^*\1{C(U)}$.
We conclude that $C$ satisfies non-effective Nisnevich-hyperdescent if and only if $F_{U,M,N}$ satisfies Nisnevich-hyperdescent for all $U,M,N$.
Morel-Voevodsky prove that the latter is equivalent to $F_{U,M,N}$ satisfying Nisnevich excision, for all $U,M,N$.
Translating back, we see that it suffices to show that $C$ satisfies `non-effective Nisnevich excision', and this follows easily from the localization property and smooth base change; see~\cite[Proposition~3.3.4]{Cisinski-Deglise_mixed-motives}.
\end{proof}

\begin{thrm}
\label{sta:SH-initial}%
The object $\SHH\in\CoCoSy{B}$ is initial.
\end{thrm}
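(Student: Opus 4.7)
The strategy is to leverage the two major inputs already at our disposal: the fully faithful embedding $\CoCoSy{B}\hookrightarrow\pblcost$ from \Cref{sta:cosy-pb}, and the initiality of $\SHH=\LT\Lpt\Llcl\pbgmco$ in $\pblcost$ supplied by \Cref{thm:summary} (interpreted via \Cref{rmk:cosyprime}). Once we know $\SHH$ actually belongs to the essential image of $\CoCoSy{B}\hookrightarrow\pblcost$, the conclusion is immediate: for any $C\in\CoCoSy{B}$, full faithfulness identifies $\map[\CoCoSy{B}]{\SHH}{C}$ with $\map[\pblcost]{\SHH}{C}$, and the latter is contractible by initiality of $\SHH$ in $\pblcost$.

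Thus the first and main task is to verify that $\SHH$ is a cocomplete coefficient system in the sense of \Cref{defn:CoCoSy}. Almost all axioms are built into our construction or the preceding theory: $\SHH\in\pb$ yields smooth base change and the smooth projection formula; since $\SHH$ takes values in $\calg{\PrL}$, the pullback functors $f^*$ admit right adjoints $f_*$ and each $\SHH(X)^\otimes$ is closed; $\affine^1$-homotopy invariance is $\lcl_{\affine^1_B}$-locality (\Cref{exa:local-interval}), Nisnevich hyperdescent is $\lcl_{\hat{\nis}}$-locality (\Cref{exa:local-hyperdescent}), and both are encoded in \Cref{notn:cspb}; finally, $\mathrm{T}$-stability and stability of each $\SHH(X)$ follow from $\T$-stability with $\T=\{(\projective^1_B,\infty)\}$ exactly as spelled out in \Cref{rmk:cosyprime}.

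The one genuinely non-formal axiom is localization: for every closed immersion $i:Z\hookrightarrow X$ with complementary open immersion $j:U\hookrightarrow X$ in $\sch{B}$, the square
\[
\begin{tikzcd}
\SHH(Z) \ar[r, "i_*"] \ar[d] & \SHH(X) \ar[d, "j^*"] \\
0 \ar[r] & \SHH(U)
\end{tikzcd}
\]
must be Cartesian in $\iCATst$. This is precisely the Morel--Voevodsky localization theorem for the stable motivic homotopy category, and it is the one input that cannot be deduced from the universal properties exploited in the preceding sections (indeed, it is strictly stronger than Nisnevich (hyper)descent, compare \Cref{sta:cosy-pb}). I would invoke it as a black box, citing \cite{Morel-Voevodsky_A1-homotopy-theory} for the original statement and pointing to the subsequent refinements in the \icategorical framework where this localization is proved for $\SHH(\blank)$ modelled as in \Cref{rmk:construction-SH}.

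This single citation is the crux: once localization for $\SHH$ is granted, $\SHH$ lies in $\CoCoSy{B}$, and the combination of \Cref{sta:cosy-pb} with \Cref{thm:summary} concludes the proof. I expect the main conceptual obstacle to be none of the above---they are either formal or classical---but rather a careful bookkeeping argument that the localization theorem as traditionally stated (often at the level of presheaf model categories or homotopy categories) is equivalent to the stated Cartesianness in $\iCATst$; this is where one must pay some attention, but it is a standard translation through the recognition results for Verdier sequences of stable \icats.
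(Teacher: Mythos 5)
Your proposal follows essentially the same route as the paper: reduce, via the fully faithful embedding of \Cref{sta:cosy-pb} and the initiality of $\SHH$ in $\pblcost$ (\Cref{thm:summary}, \Cref{rmk:construction-SH}), to checking that $\SHH$ lies in the essential image, i.e.\ satisfies the coefficient-system axioms, with localization being the one non-formal input cited to Morel--Voevodsky (the paper points to \cite[\S\,4.5]{Ayoub_six-operationsII} and handles the translation to the \icategorical formulation via \Cref{rmk:CoSy}). This matches the paper's argument, including your observation that the translation from the classical statement to the Cartesian square in $\iCATst$ is the only point requiring care.
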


\begin{proof}
By \Cref{sta:cosy-pb}, \Cref{rmk:construction-SH}, \Cref{thm:summary}, it suffices to show that the pullback formalism~$\SHH$ belongs to the essential image of the embedding in \Cref{sta:cosy-pb}.
In other words, it suffices to show that $\SHH$ satisfies the axioms of a coefficient system.
The only non-formal one is the localization axiom which was established by Morel-Voevodsky~\cite{Morel-Voevodsky_A1-homotopy-theory} themselves.
A complete proof of all the axioms can be found in~\cite[\S\,4.5]{Ayoub_six-operationsII} (see also \Cref{rmk:CoSy}).
\end{proof}

\begin{rmk}
It follows from \Cref{sta:SH-initial} that $\SHH$ is also an initial object of the full \subicat of presentable coefficient systems (\Cref{rmk:CoSy-presentable}).
Similarly, if we restrict to the full \subicat of cocomplete coefficient systems such that $f_*$ admits a right adjoint for every $f:X\to Y$ (\cf \Cref{rmk:CoSy-presentable}) then $\SHH$ is an initial object of that \icat too.
\end{rmk}

\begin{rmk}
\label{rmk:small-cosyprime}%
In \Cref{sta:SH-initial}, we deal exclusively with cocomplete coefficient systems.
However, if $C:\schop{B}\to\calg{\iCatst}$ is a functor with values in \emph{small} stable \icats that satisfies smooth base change and the smooth projection formula, non-effective Nisnevich excision (as in the proof of \Cref{sta:cosy-pb}) and $\affine^1$-homotopy invariance, and $\mathrm{T}$-stability, then passing pointwise to its Ind-completion with the Day convolution product produces an object of $\pblprst$.
The essentially unique morphism $\SHH\to \Ind(C)$ restricts to a morphism $\SHH^\omega\to C^\natural$ from the subfunctor of compact objects to the pointwise idempotent completion of~$C$.
\end{rmk}

\appendix

\section{Symmetric monoidal (un)straightening}
\label{sec:symm-mono-unstr}
Recall that straightening\,/\,unstraightening sets up an equivalence between functors $X\to\iCAT$ and coCartesian fibrations over~$X$.
Our goal in this section is to upgrade this equivalence to the symmetric monoidal setting.
This is due to Lurie~\cite{Lurie_higher-algebra}, see also~\cite{gaitsgory-lurie:weil1}.

\begin{notn}
\label{notn:tensor-un-straightening}
Throughout this section, we use the following notation and hypotheses:
\begin{itemize}
\item
$\fin$, the category of finite pointed sets with object $\langle n\rangle$, $n\geq 0$;
\item 
$\pi^\otimes:\cD^\otimes\to\fin$, a symmetric monoidal \icat.
\end{itemize}
In contrast to the main body of the text, we distinguish notationally between symmetric monoidal functors $p^\otimes:\cC^\otimes\to(\cC')^\otimes$ and their underlying functors $p:\cC\to\cC'$, as the distinction plays a more prominent role here.

Also in contrast to the main body of the text, we will have no need to distinguish between \icats of different sizes and will therefore refrain from describing them as small, large, or very large.
In view of the context in which we apply these results, we will throughout work in $\iCAT$, the objects of which will simply be called \icats.
\end{notn}

\begin{defn}
\label{def:D-monoidal}%
Recall the notion of a \emph{$\cD^\otimes$-monoidal \icat}~\cite[Definition~2.1.2.13]{Lurie_higher-algebra}.
It is a coCartesian fibration $p^\otimes:\cC^\otimes\to \cD^\otimes$ which satisfies the following two equivalent conditions:
\begin{enumerate}[label=(\roman*)]
\item
\label{def:D-monoidal.operad}%
The composite $\pi^\otimes\circ p^\otimes:\cC^\otimes\to\fin$ is an \ioperad.
\item
\label{def:D-monoidal.fibers}%
For each $d\simeq d_1\oplus\cdots \oplus d_n\in \cD^\otimes_{\langle n\rangle}\simeq \cD^n$, the inert maps $d\to d_i$ induce an equivalence $\cC_d^\otimes\isoto \prod_{i=1}^n\cC_{d_i}^\otimes$.
\end{enumerate}
A \emph{$\cD^\otimes$-monoidal functor} is a map over $\cD^\otimes$ which preserves $\cD^\otimes$-coCartesian edges.
\end{defn}

\begin{rmk}
\label{rmk:D-monoidal-monoidal}%
It follows immediately from the definition that a $\cD^\otimes$-monoidal \icat $p^\otimes:\cC^\otimes\to \cD^\otimes$ is itself a symmetric monoidal \icat \textsl{via} the composite $\pi^\otimes\circ p^\otimes:\cC^\otimes\to\fin$.
Moreover, as a coCartesian fibration between coCartesian fibrations, $p^\otimes$ preserves coCartesian edges (\Cref{coCart-fibs-morphisms}), in other words, $p^\otimes$ is a symmetric monoidal functor.

Let $\iCAT^\otimes$ denote the \icat of symmetric monoidal \icats and symmetric monoidal functors~\cite[Variant~2.1.4.13]{Lurie_higher-algebra}.
The observation of the first paragraph allows us to define the \icat of $\cD^\otimes$-monoidal \icats $\iCAT^{\cD^\otimes}$ as a \subicat of $\iCAT^\otimes$.
This is compatible with the notation of~\cite[Remark~2.4.2.6]{Lurie_higher-algebra}.
\end{rmk}

\begin{lem}
\label{coCart-fibs-morphisms}%
Let $r:X\xto{p}Y\xto{q}Z$ be a composition of coCartesian fibrations of simplicial sets.
Then $p:X\to Y$ is a morphism of coCartesian fibrations over $Z$.
Moreover, an $r$-coCartesian edge is necessarily $p$-coCartesian.
\end{lem}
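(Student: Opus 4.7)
The plan is to deduce both statements from the standard characterization of coCartesian edges in a composition of inner fibrations given in \cite[Proposition~2.4.1.3.(3)]{Lurie_higher-topos}. That result asserts that, whenever $p(f)$ is $q$-coCartesian, $f$ is $p$-coCartesian if and only if $f$ is $r$-coCartesian. Granting this, the second assertion of the lemma becomes the easy consequence: once we know that for an $r$-coCartesian edge $f$ the image $p(f)$ is $q$-coCartesian (which is precisely the first assertion), we may invoke the characterization to conclude that $f$ is $p$-coCartesian.

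The substantive step is therefore the first assertion. Given an $r$-coCartesian edge $f: x \to x'$ in $X$, I would first use that $q$ is a coCartesian fibration to choose a $q$-coCartesian lift $g: p(x) \to y$ of $r(f) \in Z$, and then use that $p$ is a coCartesian fibration to choose a $p$-coCartesian lift $\tilde{g}: x \to \tilde{y}$ of $g$. The cited characterization, applied to $\tilde{g}$, yields that $\tilde{g}$ is $r$-coCartesian, since $p(\tilde{g}) = g$ is $q$-coCartesian by construction and $\tilde{g}$ is $p$-coCartesian by construction.

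Both $f$ and $\tilde{g}$ are now $r$-coCartesian edges with source $x$ lifting $r(f) \in Z$, so by essential uniqueness of coCartesian lifts they are equivalent as such; in particular $p(f)$ and $p(\tilde{g}) = g$ are equivalent as edges of $Y$ over $r(f)$. Since the class of $q$-coCartesian edges is stable under equivalence, $p(f)$ is $q$-coCartesian, which establishes the first assertion and completes the proof. The main obstacle is purely organisational: keeping track of which characterization is applied where, and carefully invoking the existence and essential uniqueness of coCartesian lifts; no actual calculation is required.
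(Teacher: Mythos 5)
Your proposal is correct and follows essentially the same route as the paper: construct the comparison lift (a $q$-coCartesian lift of $r(f)$ followed by a $p$-coCartesian lift), apply \cite[Proposition~2.4.1.3.(3)]{Lurie_higher-topos} to see it is $r$-coCartesian, compare with $f$ by uniqueness of coCartesian lifts, and deduce the second assertion from the first by the same proposition. The paper merely spells out your ``equivalent as edges'' step as a factorization $f\simeq\alpha\circ f'$ with $\alpha$ an equivalence in the fiber, so that $p(f)\simeq p(\alpha)\circ g$ is a composite of $q$-coCartesian edges.
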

\begin{proof}
To see this, let $f:x_1\to x_2$ be an $r$-coCartesian edge, and choose a $q$-coCartesian lift $g:p(x_1)\to y_2$ of $r(f)$ and a $p$-coCartesian lift $f':x_1\to x_2'$ of $g$.
It follows from \cite[Proposition~2.4.1.3.(3)]{Lurie_higher-topos} that $f'$ is an $r$-coCartesian lift of $r(f)$.
Therefore, $f$ factors as a composition $\alpha\circ f'$ where $\alpha$ is an equivalence in $X_{r(x_2)}$.
It follows that $p(\alpha)$ is an equivalence in $Y_{q(p(x_2))}$ hence is $q$-coCartesian.
We deduce that $p(f)$ is the composition $p(\alpha)\circ g$ of $q$-coCartesian edges hence is $q$-coCartesian itself.

For the second statement, let $f$ be an $r$-coCartesian edge.
By the first part just proved, we know that $p(f)$ is $q$-coCartesian.
It then follows from~\cite[Proposition~2.4.1.3.(3)]{Lurie_higher-topos} that $f$ is $p$-coCartesian.
\end{proof}

\begin{rmk}
Checking that a map of simplicial sets $p^\otimes:\cC^\otimes\to \cD^\otimes$ is a $\cD^\otimes$-monoidal \icat involves establishing that it is a coCartesian fibration.
Sometimes this can be checked more easily step-by-step, as the next result shows.
\end{rmk}

\begin{prop}
\label{sta:D-monoidal-explicit}%
Let $p^\otimes:\cC^\otimes\to \cD^\otimes$ be a symmetric monoidal functor which is an inner fibration.
Then the following are equivalent:
\begin{enumerate}[(i)]
\item
\label{it:D-monoidal-explicit.monoidal}%
The map $p^\otimes$ defines a $\cD^\otimes$-monoidal \icat.
\item
\label{it:D-monoidal-explicit.explicit}%
It satisfies:
\begin{enumerate}[(1)]
\item
\label{it:D-monoidal-explicit.fibration}%
the map on underlying \icats $p:\cC\to \cD$ is a coCartesian fibration, and
\item
\label{it:D-monoidal-explicit.morphism}%
the $p$-coCartesian edges are closed under tensor product with objects in $\cC$.
\end{enumerate}
\end{enumerate}
\end{prop}
\begin{proof}
Assume \ref{it:D-monoidal-explicit.monoidal}.
Since $p$ is the base change of the coCartesian fibration $p^\otimes$ along the inclusion $\cD=\cD^\otimes_{\langle 1\rangle}\into \cD^\otimes$, we conclude that $p$ is a coCartesian fibration, and \ref{it:D-monoidal-explicit.fibration} is proved.
For \ref{it:D-monoidal-explicit.morphism}, let $f:x\to y$ be a $p$-coCartesian edge in $\cC$, and fix an object $z\in \cC$.
Then $f\oplus z:x\oplus z\to y\oplus z$ is a $p^\otimes_{\langle 2\rangle}$-coCartesian edge with $p^\otimes_{\langle 2\rangle}:\cC^\otimes_{\langle 2\rangle}\to \cD^\otimes_{\langle 2\rangle}$, where we use that $p^\otimes$ is a symmetric monoidal functor between symmetric monoidal \icats, and identify, for $\cE^\otimes\in\{\cC^\otimes,\cD^\otimes\}$, the fiber $\cE^\otimes_{\langle 2\rangle}\simeq \cE^2$ \textsl{via} inert maps above $\rho_i:\langle 2\rangle\to\langle 1\rangle$ in $\fin$, $i=1,2$, as usual.
It follows that $f\oplus z$ is also $p^\otimes$-coCartesian.
(This follows from the fact that in a coCartesian fibration, locally coCartesian edges coincide with coCartesian edges~\cite[Proposition~2.4.2.8]{Lurie_higher-topos}, together with \cite[Remark~2.4.1.12]{Lurie_higher-topos}.)
The tensor product $\otimes:\cC^\otimes_{\langle 2\rangle}\to \cC$ preserves $p^\otimes$-coCartesian edges, and the latter are then necessarily $p$-coCartesian, so we win.

Conversely, assume \ref{it:D-monoidal-explicit.explicit}.
We already know that the composite $\pi^\otimes\circ p^\otimes$ exhibits $\cC^\otimes$ as a symmetric monoidal \icat. In particular, condition \ref{def:D-monoidal.operad} in \Cref{def:D-monoidal} is verified.
It remains to prove that $p^\otimes:\cC^\otimes\to \cD^\otimes$ is a coCartesian fibration.

Consider the commutative triangle of simplicial sets with edges $p^\otimes$, $\pi^\otimes$, $\pi^\otimes\circ p^\otimes$.
We claim that~\cite[Proposition~2.4.2.11]{Lurie_higher-topos} applies to this triangle.
Indeed, assumption~(1) is \Cref{rmk:D-monoidal-monoidal}.
Assumption~(2) is satisfies since $p^\otimes$ is symmetric monoidal.
For assumption~(3), fix $\langle n\rangle\in\fin$ and consider the induced map on the fibers $p^\otimes_{\langle n\rangle}:\cC^{\otimes}_{\langle n\rangle}\to\cD^\otimes_{\langle n\rangle}$.
Since $p^\otimes$ is a map of \ioperads, we may identify this map with $p^{\times n}:\cC^{\times n}\to\cD^{\times n}$.
In particular, it is a coCartesian fibration, by~\ref{it:D-monoidal-explicit.fibration}.
We then deduce that $p^\otimes$ is a locally coCartesian fibration, and to conclude we need to show that locally coCartesian edges are closed under composition~\cite[Proposition~2.4.2.8]{Lurie_higher-topos}.

Let us then be given two locally $p^\otimes$-coCartesian edges $f:x\to y$, $g:y\to z$, over $\alpha:\langle n'\rangle\to\langle n\rangle$ and $\beta:\langle n\rangle\to\langle m\rangle$, respectively.
By~\cite[Proposition~2.4.2.11]{Lurie_higher-topos} again, we may write $f$ as a composite $f_2\circ f_1$ where $f_1$ is $\fin$-coCartesian, and $f_2$ is $p^\otimes_{\langle n\rangle}$-coCartesian.
Similarly, we may write $g=g_2\circ g_1$ where $g_1$ is $\fin$-coCartesian, and $g_2$ is $p^\otimes_{\langle m\rangle}$-coCartesian.
We need to find a similar factorization of $g\circ f$, by~\cite[Proposition~2.4.2.11]{Lurie_higher-topos}.
Factoring $\beta$ as an inert map followed by an active one, it suffices to treat these two cases separately.
We may write $f_2$ as a sum of $p$-coCartesian edges
\begin{equation}
\label{eq:f_2}%
f_2^{(1)}\oplus\cdots\oplus f_2^{(n)}:x_1\oplus\cdots\oplus x_n\to y_1\oplus\cdots\oplus y_n.
\end{equation}
For the inert case, assume $\beta$ corresponds to the subset $\{i_1,\ldots,i_m\}\subseteq\{1,\ldots,n\}$.
The composite $g_1\circ f_2$ then factors as
\[
x_1\oplus\cdots\oplus x_n\to x_{i_1}\oplus\cdots\oplus x_{i_m}\xto{f_2^{(i_1)}\oplus\cdots\oplus f_2^{(i_m)}}y_{i_1}\oplus\cdots\oplus y_{i_m},
\]
where the first map is inert.
In particular, the first edge is $\fin$-coCartesian, and the second edge is $p^\otimes_{\langle m\rangle}$-coCartesian, thus the claim in this case.

Now assume $\beta$ is active.
Without loss of generality, we will assume that $m=1$.
In that case the composite $g_1\circ f_2$ factors as
\[
x_1\oplus\cdots\oplus x_n\to x_1\otimes\cdots\otimes x_n\xto{f_2^{(i_1)}\otimes\cdots\otimes f_2^{(i_m)}} y_1\otimes\cdots\otimes y_n,
\]
where the first edge is $\fin$-coCartesian.
By~\ref{it:D-monoidal-explicit.morphism} (and induction), the second edge is $p$-coCar\-te\-sian thus the claim.
\end{proof}

\begin{rmk}
In a similar vein, checking that a functor between $\cD^\otimes$-monoidal \icats preserves coCartesian edges can be established step-by-step.
\end{rmk}

\begin{prop}
\label{sta:D-monoidal-morphism-explicit}%
Let $p^\otimes:\cC^\otimes\to \cD^\otimes$ and $(p')^\otimes:(\cC')^\otimes\to \cD^\otimes$ be two $\cD^\otimes$-monoidal \icats, and let $\phi^\otimes:\cC^\otimes\to (\cC')^\otimes$ be a map of \ioperads such that $(p')^\otimes\circ\phi^\otimes=p^\otimes$.
Then the following are equivalent:
\begin{enumerate}[(i)]
\item
\label{it:D-monoidal-morphism-explicit.monoidal}%
$\phi^\otimes$ is a $\cD^\otimes$-monoidal functor.
\item
\label{it:D-monoidal-morphism-explicit.explicit}%
$\phi^\otimes$ is a symmetric monoidal functor such that the map of underlying \icats $\phi:\cC\to \cC'$ preserves $\cD$-coCartesian edges.
\end{enumerate}
\end{prop}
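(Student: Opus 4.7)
The plan is to reduce both implications to a factorization of $p^\otimes$-coCartesian edges, extracted from the proof of \Cref{sta:D-monoidal-explicit} via \cite[Proposition~2.4.2.11]{Lurie_higher-topos}: any $p^\otimes$-coCartesian edge $f$ in $\cC^\otimes$ lying above $\alpha:\langle n'\rangle\to\langle n\rangle$ in $\fin$ factors as $f = f_2 \circ f_1$, where $f_1$ is $\fin$-coCartesian and $f_2$ is $p^\otimes_{\langle n\rangle}$-coCartesian---that is, $f_2$ lies over the identity on $\langle n\rangle$ and therefore corresponds to an $n$-tuple of $p$-coCartesian edges in $\cC$ under the inert-edge equivalence $\cC^\otimes_{\langle n\rangle}\simeq \cC^n$. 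This factorization, combined with \Cref{coCart-fibs-morphisms} and the compatibility statement \cite[Proposition~2.4.1.3.(3)]{Lurie_higher-topos} for composites of coCartesian fibrations, will let me check coCartesianness piece by piece.

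For (ii)$\Rightarrow$(i), I would argue that $\phi^\otimes$ preserves each of the two types of edges in the factorization. Symmetric monoidality of $\phi^\otimes$ ensures both that $\phi^\otimes(f_1)$ is $\fin$-coCartesian in $(\cC')^\otimes$ and that the inert-edge identifications $\cC^\otimes_{\langle n\rangle}\simeq \cC^n$ and $(\cC')^\otimes_{\langle n\rangle}\simeq (\cC')^n$ are compatible under $\phi^\otimes$, so that the restriction $\phi^\otimes_{\langle n\rangle}$ corresponds to $\phi^n$. The hypothesis that $\phi$ preserves $\cD$-coCartesian edges then forces $\phi^\otimes(f_2)$ to be $(p')^\otimes_{\langle n\rangle}$-coCartesian, while \Cref{coCart-fibs-morphisms} applied to $(\cC')^\otimes\to\cD^\otimes\to\fin$ upgrades the $\fin$-coCartesian edge $\phi^\otimes(f_1)$ to a $(p')^\otimes$-coCartesian edge. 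Composing, $\phi^\otimes(f)$ is $(p')^\otimes$-coCartesian.

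For (i)$\Rightarrow$(ii), I would treat the two assertions separately. For symmetric monoidality, take an $\fin$-coCartesian edge $e$ in $\cC^\otimes$; by \Cref{coCart-fibs-morphisms} applied to $\cC^\otimes\to \cD^\otimes\to \fin$, $e$ is itself $p^\otimes$-coCartesian and $p^\otimes(e)$ is $\pi^\otimes$-coCartesian. By hypothesis $\phi^\otimes(e)$ is $(p')^\otimes$-coCartesian, with $(p')^\otimes$-image equal to $p^\otimes(e)$, which is $\pi^\otimes$-coCartesian; the converse direction of \cite[Proposition~2.4.1.3.(3)]{Lurie_higher-topos} then forces $\phi^\otimes(e)$ to be $\fin$-coCartesian. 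For the underlying $\phi$ preserving $\cD$-coCartesian edges, I would use the identification $\cC \simeq \cC^\otimes \times_{\cD^\otimes} \cD$: under this pullback, $p$-coCartesian edges in $\cC$ are exactly the $p^\otimes$-coCartesian edges of $\cC^\otimes$ lying above the fiber $\cD\subset \cD^\otimes$, and $\phi^\otimes$ sends such to $(p')^\otimes$-coCartesian edges lying over $\cD$, i.e., $p'$-coCartesian edges of $\cC'$.

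I do not anticipate any essential technical obstacle: once the factorization extracted from \Cref{sta:D-monoidal-explicit} is at hand, both directions amount to careful bookkeeping with the general lemmas on composites of coCartesian fibrations cited above.
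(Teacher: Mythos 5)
Your proposal is correct and follows essentially the same route as the paper: both directions rest on the factorization of $p^\otimes$-coCartesian edges into a $\fin$-coCartesian edge followed by a fiberwise coCartesian one (extracted from the proof of \Cref{sta:D-monoidal-explicit} via \cite[Proposition~2.4.2.11]{Lurie_higher-topos}), together with \Cref{coCart-fibs-morphisms} and \cite[Proposition~2.4.1.3.(3)]{Lurie_higher-topos}. The only step you leave implicit---that a $(p')^\otimes_{\langle n\rangle}$-coCartesian edge is $(p')^\otimes$-coCartesian, so the composite is coCartesian---is handled at the same level of brevity in the paper's own proof (via locally coCartesian $=$ coCartesian in a coCartesian fibration), so there is no genuine gap.
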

\begin{proof}
Assume \ref{it:D-monoidal-morphism-explicit.monoidal}.
Thus, $\phi^\otimes$ preserves $\cD^\otimes$-coCartesian edges.
As seen in the proof of \Cref{sta:D-monoidal-explicit}, an edge in $\cC$ \resp{$\cC'$} is $\cD$-coCartesian if and only if it is $\cD^\otimes$-coCartesian, when viewed as an edge in $\cC^\otimes$ \resp{$(\cC')^\otimes$}.
It follows that $\phi$ preserves $\cD$-coCartesian edges.

Now let $f\in \cC^\otimes$ be a $\fin$-coCartesian edge.
By the ``Moreover'' statement in \Cref{coCart-fibs-morphisms}, $f$ is $p^\otimes$-coCartesian, and by our assumption, $\phi^\otimes(f)$ is $(p')^\otimes$-coCartesian.
But as $(p')^\otimes(\phi^\otimes(f))=p^\otimes(f)$ is $\pi^\otimes$-coCartesian, it follows from~\cite[Prop\-o\-si\-tion~2.4.1.3.(3)]{Lurie_higher-topos} that $\phi^\otimes(f)$ is also $\fin$-coCartesian.
In other words, $\phi^\otimes$ is a symmetric monoidal functor.

Conversely, assume \ref{it:D-monoidal-morphism-explicit.explicit}.
We need to show that $\phi^\otimes$ preserves $D^\otimes$-coCartesian edges.
As seen in the proof of \Cref{sta:D-monoidal-explicit},
a $\cD^\otimes$-coCartesian edge in $\cC^\otimes$ may be written as a composite $f_2\circ f_1$ where $f_1$ is $\fin$-coCartesian, and $f_2$ is $p^\otimes_{\langle n\rangle}$-coCartesian, for some $n$.
By our assumption, $\phi^\otimes(f_1)$ is $\fin$-coCartesian.
And writing $f_2$ as in~(\ref{eq:f_2}), we see that $\phi^\otimes(f_2)$ may be identified with
\[
\phi(f_2^{(1)})\oplus\cdots\oplus \phi(f_2^{(n)}):\phi(x_1)\oplus\cdots\oplus \phi(x_n)\to \phi(y_1)\oplus\cdots\oplus \phi(y_n).
\]
since $\phi^\otimes$ is a map of \ioperads.
By assumption, this is $(p')^\otimes_{\langle n\rangle}$-coCartesian and we conclude.
\end{proof}

\begin{defn}
\label{def:D-monoid}%
We also recall the \emph{\icat of $\cD^\otimes$-monoids}, denoted by $\mon[\cD^\otimes]{\iCAT}$ in~\cite[Definition~2.4.2.1]{Lurie_higher-algebra}.
This is the full \subicat of $\fun{\cD^\otimes}{\iCAT}$ spanned by functors $M$ satisfying the following property:
\begin{enumerate}[label=(\roman*'),start=2]
\item
\label{def:D-monoid.values}%
For each $d\simeq d_1\oplus\cdots\oplus d_n\in \cD^\otimes_{\langle n\rangle}\simeq\cD^n$, the inert maps $d\to d_i$ induce an equivalence $M(d)\isoto \prod_{i=1}^nM(d_i)$.
\end{enumerate}
\end{defn}

It is clear that the conditions \ref{def:D-monoidal.fibers} of \Cref{def:D-monoidal} and \ref{def:D-monoid.values} of \Cref{def:D-monoid} exactly correspond to each other \textsl{via} the straightening\,/\,unstraightening equivalence. In view of \Cref{rmk:D-monoidal-monoidal}, the following result is a special case of~\cite[Remark~2.4.2.6]{Lurie_higher-algebra}.
\begin{prop}
\label{tensor-un-straightening-pre}%
The composite of the full embedding $\mon[\cD^\otimes]{\iCAT}\into\fun{\cD^\otimes}{\iCAT}$ and the unstraightening equivalence identifies the \icat of $\cD^\otimes$-monoids with the \icat of $\cD^\otimes$-monoidal \icats:
\[
\mon[\cD^\otimes]{\iCAT}\simeq \iCAT^{\cD^\otimes}
\]\qed
\end{prop}

\begin{cor}
\label{tensor-un-straightening}%
Assume $\cD^\otimes$ is a coCartesian monoidal structure.
Straightening\,/\,unstraightening induces an equivalence
\[
\iCAT^{\cD^\otimes}\simeq \fun{\cD}{\calg{\iCAT}}.
\]
\end{cor}
\begin{proof}
More precisely, this is the composite of the following equivalences:
\begin{align*}
  \fun{\cD}{\calg{\iCAT}}
  &\simeq \alg[\cD^{\amalg}]{\iCAT}&&\text{\cite[Theorem~2.4.3.18]{Lurie_higher-algebra}}\\
  &\simeq \mon[\cD^{\amalg}]{\iCAT}&&\text{\cite[Proposition~2.4.2.5]{Lurie_higher-algebra}}\\
  &\simeq \iCAT^{\cD^\otimes}&&\text{\Cref{tensor-un-straightening-pre}}
\end{align*}
\end{proof}

\begin{rmk}
\label{tensor-un-straightening-informal}%
We conclude this section with an informal description of the straightening / un\-straight\-en\-ing equivalence in the symmetric monoidal case.
In view of the application in the main body of the text we specialize to the following situation.
We assume that $D$ is an \icat with finite products, and we endow $\cD:=D\op$ with the coCartesian monoidal structure $\cD^\amalg$.

\begin{enumerate}[(i)]
\item
Let $F^\otimes:D\op\to\calg{\iCAT}$ be a functor.
Thus we may think of this as associating to every $d\in D$ an \icat $F(d)$ endowed with a symmetric monoidal structure $\otimes_d$.
Moreover, for every edge $f:d'\to d$ in $D$, the associated functor $f^*:F(d)\to F(d')$ is symmetric monoidal.
Under the equivalence of \Cref{tensor-un-straightening} we obtain a $\cD^\amalg$-monoidal \icat
\[
p^\otimes:\cC^\boxtimes\to \cD^\amalg
\]
which may informally be described as follows:
\begin{itemize}
\item The objects of $\cC$ are pairs $(d,M)$ where $d$ is an object in $D$, and $M$ is an object in~$F(d)$.
\item A morphism $(d,M)\to (d',M')$ in $\cC$ consists of a morphism $f:d'\to d$ in $D$, and a morphism $f^*M\to M'$ in $F(d')$.
\item The tensor product of $(d,M)$ and $(d',M')$ is the ``external product'' $M\boxtimes M':=p^*M\otimes_{d\times d'}(p')^*M'$ in $F(d\times d')$ where $p:d\times d'\to d$ and $p':d\times d'\to d'$ are the canonical projections in $D$.
\end{itemize}
Condition \ref{it:D-monoidal-explicit.morphism} in \Cref{sta:D-monoidal-explicit} expresses the fact that with the notation above, and a morphism $f:e\to d$ in $D$, the canonical morphism
\[
(f\amalg \id_{d'})^*(M\boxtimes M')\isoto f^*M\boxtimes M'
\]
is an equivalence.

\item
Conversely, if $p^\otimes:\cC^\boxtimes\to \cD^\amalg$ is a $\cD^\amalg$-monoidal \icat, we may view the underlying coCartesian fibration $p:\cC\to \cD$ as defining a functor $F:D\op\to\iCAT$, which sends $d$ to the fiber $\cC_d$.
It underlies a symmetric monoidal structure which may be described as follows.
Given $M,M'\in \cC_d$, their tensor product is the object $\Delta^*(M\boxtimes M')$ where $\Delta$ denotes the diagonal map $d\to d\times d$ in $D$.
\end{enumerate}
\end{rmk}



\end{document}